\newcommand{\inx}[1]{\ \epsilon_{#1}\ }
\def\imod#1{\allowbreak\mkern10mu({\operator@font mod}\,\,#1)}
\newtheorem{theorem}{Theorem}[section]
\newtheorem{lemma}[theorem]{Lemma}
\newtheorem{corollary}[theorem]{Corollary}
\newtheorem{conjecture}[theorem]{Conjecture}
\theoremstyle{definition}
\newtheorem{definition}[theorem]{Definition}
\newtheorem{problem}[theorem]{Problem}
\theoremstyle{remark}
\newtheorem{remark}[theorem]{Remark}
\theoremstyle{remark}
\numberwithin{equation}{section}
     \DeclareMathOperator{\Aut}{Aut}
    \DeclareMathOperator{\length}{length}
    \DeclareMathOperator{\dom}{dom}
    \DeclareMathOperator{\proj}{proj}
    \DeclareMathOperator{\Char}{Char}
     \DeclareMathOperator{\II}{II}
    \DeclareMathOperator{\vN}{vN}\DeclareMathOperator{\SL}{SL}
    \DeclareMathOperator{\III}{III}
    \DeclareMathOperator{\stab}{stab}
    \DeclareMathOperator{\Coll}{Coll}
    \newcommand{\restrict}{\!\upharpoonright\!}
    \newcommand{\vrestrict}{\upharpoonright}
    \newcommand{\act}{A}
    \newcommand{\actfwm}{A^{\star}_{\rm wm}}
    \newcommand{\actson}{\curvearrowright}
    \newcommand{\forces}{\Vdash}
    \newcommand{\conj}{\simeq_{\rm C}}
    \newcommand{\oeq}{\simeq_{\rm OE}}
    \newcommand{\vneq}{\simeq_{\rm vNE}}
    \newcommand{\psX}{\mathscr X}
    \newcommand{\psY}{\mathscr Y}
    \newcommand{\psZ}{\mathscr Z}
\def\C{{\mathbb C}}
\def\N{{\mathbb N}}
\def\Z{{\mathbb Z}}
\def\F{{\mathbb F}}
\def\P{{\mathbb P}}
\def\T{{\mathbb T}}
\def\sss{\scriptscriptstyle}
\title[The Borel complexity of von Neumann equivalence]{The Borel complexity of von Neumann equivalence}
\author{Inessa Epstein}
\author{Asger T\"ornquist}
\subjclass[2010]{03E15, 28D15, 37A35, 37A20, 46L36 }
\keywords{Global theory of measure preserving actions; ergodic theory; group measure space factors.}
\date{\today}
\begin{document}

\maketitle

\begin{center} {\it \center Dedicated to the memory of Greg Hjorth (June 14, 1963 -- January 13, 2011)}\end{center}

\begin{abstract}
We prove that for a countable discrete group $\Gamma$ containing a copy of the free group $\F_n$, for some $2\leq n\leq\infty$, as a normal subgroup, the equivalence relations of conjugacy, orbit equivalence and von Neumann equivalence of the ergodic a.e. free probability measure preserving actions of $\Gamma$ are analytic non-Borel equivalence relations in the Polish space of probability measure preserving $\Gamma$ actions. As a consequence we obtain that the isomorphism relation in the spaces of separably acting factors of type $\II_1$, $\II_\infty$ and $\III_\lambda$, $0\leq\lambda\leq 1$, are analytic and not Borel when these spaces are given the Effros Borel structure.
\end{abstract}

\section{Introduction}

A fundamental problem of ergodic theory is the \emph{conjugacy problem}: Given two measure preserving actions of a countable discrete group $\Gamma$ on a standard probability space, how does one determine if they are conjugate actions? A solution to the conjugacy problem should ideally be a method which, when given two measure preserving actions, can be applied systematically, and will produce a yes-or-no answer to the conjugacy question.

In those cases where we have nice classification theorems for the probability measure preserving (p.m.p.) actions of $\Gamma$, then the classification also solves the conjugacy problem: For instance, when $\Gamma=\Z$ (or, more generally is amenable, \cite{orwe87}, or even is a non-amenable free group, \cite{bowen10b}) and we consider only Bernoulli actions of $\Gamma$, then the conjugacy problem can be solved by computing the entropy of the two actions, which by a celebrated Theorem of Ornstein \cite{ornstein70} is a complete invariant for conjugacy. However, the conjugacy problem may be viewed as distinct from the classification problem: Having a method for answering the yes-or-no question of conjugacy clearly does not provide a classification, and a classification may assign invariants for which it is difficult to determine if the assigned invariants are isomorphic or not.

The conjugacy problem arguably goes back to Halmos, who posed it in the context of $\Gamma=\Z$ as Problem 3 in \cite[p. 96]{halmos56}. As stated above the conjugacy problem is vague since it is not clear what is meant by a ``method''. One possible way of posing the conjugacy problem in a precise mathematical way is the following:

\begin{problem}[Kechris {\cite[18.(IVb)]{kechris10}}]\label{pr.kechris}
Is the conjugacy relation for p.m.p. (ergodic, a.e. free) actions of a countable discrete group $\Gamma$ a Borel or analytic set? If it is analytic, is it complete analytic?
\end{problem}

In \S 2 below we will give an explanation for why this question is closely related to the conjugacy problem. Roughly speaking, if the conjugacy relation were Borel, then the description of the Borel set (that is, how it is build up using countable unions and complements) would provide a method for determining if two actions are conjugate, and this method would use only countable resources. If, on the other hand, the conjugacy relation is analytic and not Borel, then no generally applicable method that relies on only countable resources could solve the conjugacy problem; and in case the conjugacy relation is complete analytic, then the \emph{worst possible} general method (which we describe in \S 2) would also be the \emph{best possible}.

When $\Gamma=\Z$, the conjugacy problem was solved by Hjorth in \cite{hjorth01}, however the solution suffered from the obvious defect that it only showed that the conjugacy relation on \emph{non-ergodic} measure preserving transformations is analytic and not Borel. Only recently was the conjugacy problem for $\Z$-actions given a satisfactory solution: In \cite{foruwe11}, Foreman, Rudolph and Weiss showed that the conjugacy relation on \emph{ergodic} actions of $\Z$ is a complete analytic set (see also \cite{foruwe06}.) In this paper we will will prove the following:

\begin{theorem}\label{t.mainintro1}
Let $\Gamma$ be a countable discrete group containing a non-amenable free group $\F_n$, $2\leq n\leq\infty$, as a normal subgroup. Then the conjugacy relation for weakly mixing a.e. free p.m.p. actions of $\Gamma$ is complete analytic, and so it is not Borel.
\end{theorem}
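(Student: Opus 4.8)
The plan is to Borel-reduce a $\bSigma^1_1$-complete equivalence relation into the conjugacy relation on weakly mixing a.e.\ free p.m.p.\ $\Gamma$-actions. The latter is manifestly analytic, being the orbit equivalence relation of the continuous action of $\mathrm{Aut}(X,\mu)$ on an invariant Borel subset of the Polish space of $\Gamma$-actions, so it suffices to exhibit the reduction from a known $\bSigma^1_1$-complete relation. The natural source is the conjugacy relation $\conj$ on ergodic p.m.p.\ $\Z$-actions, which is $\bSigma^1_1$-complete by the theorem of Foreman, Rudolph and Weiss \cite{foruwe11}. I would realise the reduction in two stages: first embed the $\Z$-problem into $\F_n$, then lift from $\F_n$ to a general $\Gamma$ using the normality of $\F_n$.

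For the first stage, fix a free generating set $x_1,\dots,x_n$ of $\F_n$ and put $C=\langle x_1\rangle$. Given an ergodic $\Z$-action $T$ on $(X,\mu)$, build from it a $C$-action $\widehat T$ (depending Borel-ly on $T$) and set $a_T:=\mathrm{CInd}_C^{\F_n}(\widehat T)$, the co-induced $\F_n$-action. The key point is that $\widehat T$ must be manufactured so that the copy of $X$ is \emph{not} a factor of $a_T$ — e.g.\ realising $\widehat T$ as a cocycle twist / skew product built from $T$ rather than a pull-back of $T$ — since otherwise the eigenvalues of $T$ would survive as eigenfunctions of the $\F_n$-action and destroy weak mixing. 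Three things must then be checked. \emph{Freeness}: $C$ is a maximal cyclic, hence malnormal and self-normalising, subgroup of the free group $\F_n$, so its normal core is trivial and distinct $\F_n$-conjugates of $C$ intersect trivially; a routine argument on the fibre product shows that only the identity of $\F_n$ can fix a positive-measure set, so $a_T$ is a.e.\ free for every $T$. \emph{Weak mixing}: decompose $L^2_0(a_T)$ along the $\F_n$-orbits of finite subsets of $\F_n/C$; by malnormality the stabiliser of a singleton is a conjugate $C^g$ and the stabiliser of a larger finite set is trivial, so every orbit-block of the Koopman representation is either $\mathrm{Ind}_{C^g}^{\F_n}(\rho)$ with $\rho$ of non-atomic spectrum (when $\widehat T$ is chosen weakly mixing) or $\lambda_{\F_n}^{(\infty)}$, neither of which has a non-zero finite-dimensional subrepresentation, so $a_T$ is weakly mixing. \emph{Faithfulness}: one must recover $T$ up to conjugacy from $a_T$, by identifying a canonical factor — restricting $a_T$ to $C$ returns $\widehat T$ at the coordinate $eC$ together with Bernoulli-type noise from the infinite $C$-orbits on $\F_n/C$, and the canonical (zero-entropy Pinsker, or joining-rigid) part of this restriction should return the $T$-data.

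For the second stage, co-induce again: $b_T:=\mathrm{CInd}_{\F_n}^{\Gamma}(a_T)=\mathrm{CInd}_C^{\Gamma}(\widehat T)$ (co-induction being transitive), possibly times the Bernoulli shift $\Gamma\curvearrowright[0,1]^{\Gamma}$ to get freeness automatically. Here normality is used crucially: because $\F_n\trianglelefteq\Gamma$, the subgroup $\F_n$ fixes every coset in $\Gamma/\F_n$, so $b_T\restrict\F_n$ is the \emph{product} of the $\F_n$-conjugates of $a_T$ rather than a co-induction that scrambles them, and the characteristic subgroup $[\F_n,\F_n]\trianglelefteq\Gamma$ — together with the canonical part of $a_T$ in each coordinate — cuts out a $\Gamma$-equivariant factor of $b_T$ through which $T$ can be tracked. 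Redoing the freeness and weak-mixing computations inside $\Gamma$ (using that every $\Gamma$-conjugate of $C$ is still a maximal cyclic subgroup of $\F_n$, so distinct ones meet trivially, and that stabilisers of finite subsets of $\Gamma/C$ of size $\ge 2$ are virtually cyclic with the corresponding induced representations still weakly mixing), one obtains $b_T\cong b_{T'}\iff a_T\cong a_{T'}\iff T\cong T'$, which is the desired reduction; hence conjugacy on weakly mixing a.e.\ free $\Gamma$-actions is $\bSigma^1_1$-complete, and in particular not Borel.

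The step I expect to be the real obstacle is the design of $\widehat T$ in the first stage: one needs to embed an arbitrary ergodic $\Z$-action into a weakly mixing free $\F_n$-action leaving no non-weakly-mixing residue — which rules out any construction in which $X$ is literally a factor, since then the eigenvalues of $T$ would persist — while keeping $T$ fully recoverable up to conjugacy. Reconciling these two requirements is what forces the cocycle/skew-product form of $\widehat T$ (plausibly via a symbolic construction in the spirit of Foreman–Rudolph–Weiss) and a careful analysis, via Pinsker factors and joinings, of which part of $a_T\restrict C$ is canonical; producing a single such construction that moreover survives the co-induction to $\Gamma$ is where the real work lies, the representation theory of free groups and the formal properties of co-induction being comparatively routine.
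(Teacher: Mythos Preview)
Your approach is entirely different from the paper's. Rather than bootstrapping from Foreman--Rudolph--Weiss via co-induction, the paper uses Popa's cocycle superrigidity to compute a cohomological conjugacy invariant --- the \emph{relative} $1$-cohomology group $H^1_{:\Delta}$ --- for quotients of generalized Bernoulli shifts, and obtains a Borel reduction from isomorphism of countable torsion-free abelian groups (complete analytic by Downey--Montalb\'an) to conjugacy. Concretely, one chooses $\Delta\lhd\Lambda=\F_n$ with $\F_n/\Delta\simeq\SL_3(\Z)$; normality of $\F_n$ in $\Gamma$ guarantees a $\Delta$-suitable $\Gamma$-action on $\Gamma/\Delta$, and for each torsion-free abelian $A$ one builds a weakly mixing free $\Gamma$-action whose $H^1_{:\Delta}$ is (topologically) $A\times\Char(\F_n/\Delta)$. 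Since the second factor is compact, $A$ is read off directly from the invariant --- there is no need to ``undo'' a co-induction or isolate a canonical factor.

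Your proposal, by contrast, has a genuine gap that you yourself flag: the construction of $\widehat T$. You need a Borel assignment $T\mapsto\widehat T$ such that (i) the co-induced $\F_n$-action is weakly mixing and (ii) $T$ is recoverable up to conjugacy; these requirements pull against each other, and ``a symbolic construction in the spirit of FRW'' is a hope, not a construction. Even granting that the FRW family has zero entropy (so that Pinsker recovery from $a_T\restrict C\simeq\widehat T\times(\text{Bernoulli noise})$ is plausible), the second stage is also underspecified: with $\F_n\lhd\Gamma$, the restriction $b_T\restrict\F_n$ is the product over $\Gamma/\F_n$ of the \emph{twisted} actions $a_T^{\gamma}$ (i.e., $a_T$ precomposed with the automorphism of $\F_n$ given by conjugation by $\gamma$), and these need not be mutually $\F_n$-conjugate; your appeal to $[\F_n,\F_n]$ does not explain how a $\Gamma$-equivariant factor singling out $T$ arises. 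The paper's invariant-based route sidesteps both difficulties: once $H^1_{:\Delta}$ is computed for the family, conjugacy of the $\Gamma$-actions immediately forces isomorphism of the invariants, with no recovery argument needed.
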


This in particular settles the conjugacy problem form $\Gamma=\F_n$ for $2\leq n\leq \infty$. The hypothesis on $\Gamma$ can be weakened considerably, and we will state our result in full in the next section. We note that Theorem \ref{t.mainintro1} and the result of Foreman, Rudolph and Weiss stand in contrast to the recent result of Hjorth and T\"ornquist \cite{hjto11}, where it is shown that conjugacy of unitary representations of any countably infinite discrete $\Gamma$ is always Borel.

\medskip

In addition to conjugacy, there are two other important equivalence relations for the p.m.p. actions that merit close consideration, namely \emph{orbit equivalence} and \emph{von Neumann equivalence} (also known as $W^*$-equivalence.) Let $(X,\mu)$ be a standard Borel probability space\footnote{In this paper all standard Borel probability spaces are non-atomic unless otherwise stated}, and let $\sigma_0, \sigma_1:\Gamma\actson (X,\mu)$ be measure preserving actions. Denote by $E_{\sigma_i}$ the orbit equivalence relation induced by $\sigma_i$, $i\in\{0,1\}$. Recall that $\sigma_0$ and $\sigma_1$ are \emph{orbit equivalent}, written $\sigma_0\oeq\sigma_1$, if there is a measure preserving Borel bijection $T:X\to X$ such that
$$
x E_{\sigma_0} x'\iff T(x) E_{\sigma_1} T(x')
$$
for almost all $x,x'\in X$. Recall also that that $\sigma_0$ and $\sigma_1$ are said to be \emph{von Neumann equivalent}, written $\sigma_0\vneq\sigma_1$, if the associated group-measure space von Neumann algebras $L^\infty(X)\rtimes_{\sigma_0}\Gamma$ and $L^\infty(X)\rtimes_{\sigma_1}\Gamma$ are isomorphic (see e.g. \cite{popa07} for a thorough discussion of von Neumann equivalence.)

When $\Gamma$ is amenable, it was shown in \cite{orwe80} and \cite{cofewe81} that all ergodic p.m.p. $\Gamma$-actions are both orbit equivalent and von Neumann equivalent. However, it has recently been shown that when $\Gamma$ is non-amenable then $\Gamma$ admits uncountably many orbit inequivalent (see \cite{epstein07}) and von Neumann inequivalent (see \cite{ioana11}) ergodic, a.e. free p.m.p. actions. The natural question whether orbit equivalence of a.e. free p.m.p. $\Gamma$-actions is Borel or analytic was raised by Kechris in \cite[18.(IVb)]{kechris10} along with Problem 1.1 above. We will prove the following:

\begin{theorem}\label{t.mainintro2}
Let $\Gamma$ be a countable discrete group containing a non-amenable free group $\F_n$, $2\leq n\leq\infty$, as a normal subgroup. Then orbit equivalence and von Neumann equivalence of weakly mixing a.e. free p.m.p. actions of $\Gamma$ are analytic relations, but they are not Borel.
\end{theorem}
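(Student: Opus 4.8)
The plan is to reduce Theorem~\ref{t.mainintro2} to Theorem~\ref{t.mainintro1} by showing that the Borel reduction witnessing completeness for conjugacy can, after passing through an auxiliary equivalence relation, also be made to work for orbit equivalence and for von Neumann equivalence. First I would record the soft half: the relations $\oeq$ and $\vneq$, restricted to the Polish space of weakly mixing a.e.\ free p.m.p.\ $\Gamma$-actions, are analytic. For $\oeq$ this is standard — the existence of a measure preserving Borel bijection $T:X\to X$ conjugating the orbit equivalence relations is an existential quantifier over a Polish space (the automorphism group of $(X,\mu)$, or the group of measure preserving transformations with the weak topology) followed by a Borel condition, so the relation is $\boldsymbol\Sigma^1_1$. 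For $\vneq$ one uses the fact that the map sending an action $\sigma$ to (a code for) the group-measure-space algebra $L^\infty(X)\rtimes_\sigma\Gamma$ in the standard Borel space of separably acting von Neumann algebras is Borel, together with the fact that isomorphism of separably acting von Neumann algebras is analytic (again an existential quantifier over the Polish space of $*$-isomorphisms / spatial implementations, followed by a Borel condition); composing gives that $\vneq$ is analytic.

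The substance is non-Borelness. The key point is that for the class of actions used to prove Theorem~\ref{t.mainintro1}, conjugacy, orbit equivalence, and von Neumann equivalence should \emph{coincide}, or at least be interleaved tightly enough that a Borel reduction from a complete analytic set into conjugacy is simultaneously a reduction into $\oeq$ and $\vneq$. Concretely, I would revisit the construction behind Theorem~\ref{t.mainintro1}: there is a Borel map $f$ from some complete analytic set (say, a set of trees or of codes for ill-founded structures) into the space of weakly mixing a.e.\ free p.m.p.\ $\Gamma$-actions such that $x$ is in the set iff $f(x)$ is conjugate to some fixed base action $\sigma_\infty$. One always has $\conj\ \subseteq\ \oeq\ \subseteq\ \vneq$ on this space (conjugate actions are orbit equivalent via the conjugating map, and orbit equivalent actions are von Neumann equivalent by the Singer/Feldman--Moore correspondence at the level of Cartan pairs, or at least von Neumann equivalent when one is willing to track the Cartan subalgebra). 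So it suffices to prove the reverse containment \emph{along the image of $f$}: if $f(x)\vneq f(y)$ (or $f(x)\vneq\sigma_\infty$), then in fact $f(x)\conj f(y)$. This is where the non-amenable normal free subgroup $\F_n\trianglelefteq\Gamma$ and weak mixing enter: one invokes a rigidity/superrigidity input — in the spirit of Popa's deformation-rigidity theory and the cocycle superrigidity results used in \cite{epstein07} and \cite{ioana11} — guaranteeing that the actions in the range of $f$ are OE-superrigid and $W^*$-superrigid within the relevant class, so that the von Neumann algebra (with its canonical Cartan subalgebra) remembers the action up to conjugacy. If the construction of $f$ produces co-induced or Epstein-type actions, this superrigidity is exactly the mechanism by which the base $\sigma_\infty$ is detected.

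Thus the ordered steps are: (1) prove $\oeq$ and $\vneq$ are analytic, as above; (2) recall from the proof of Theorem~\ref{t.mainintro1} the Borel reduction $f$ of a complete analytic set into $\conj$ on weakly mixing a.e.\ free $\Gamma$-actions, and check that its range lies in a class where a superrigidity theorem applies; (3) verify $\conj\subseteq\oeq\subseteq\vneq$ on that class; (4) use superrigidity to get, for $x,y$ in the domain, $f(x)\vneq f(y)\Rightarrow f(x)\oeq f(y)\Rightarrow f(x)\conj f(y)$, so that $f$ is simultaneously a Borel reduction into $\oeq$ and into $\vneq$; (5) conclude that $\oeq$ and $\vneq$ are complete analytic, hence not Borel. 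The main obstacle I expect is step~(2)--(4): one must be sure that the specific actions manufactured in the proof of Theorem~\ref{t.mainintro1} fall under a $W^*$-superrigidity theorem strong enough to collapse $\vneq$ back to $\conj$ on the range of $f$ — if the available superrigidity only handles $\oeq$, one still gets non-Borelness of $\oeq$ for free but must work harder (tracking the Cartan subalgebra, or using uniqueness-of-Cartan results) to push it to $\vneq$. Care is also needed that "weakly mixing a.e.\ free" is preserved and that all the maps involved are genuinely Borel for the Effros Borel structure on the spaces of type $\II_1$, $\II_\infty$, and $\III_\lambda$ factors, which yields the final consequence about isomorphism of factors.
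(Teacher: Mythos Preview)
Your plan has a genuine gap at step~(4), and it is precisely the gap that forces the paper to develop the machinery of \S 5.

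You propose to show that on the range of the reduction $f$ one has $f(x)\vneq f(y)\Rightarrow f(x)\conj f(y)$, via some OE- or $W^*$-superrigidity theorem. But no such theorem is available here, and the paper does not claim one. The rigidity input actually used is Ioana's Theorem~\ref{t.ioana}, which only says that within a single $\oeq$-class (respectively $\vneq$-class), the family $\{h(A)\restrict\F_n\}$ contains at most \emph{countably many} conjugacy classes. So the Borel map $h:{\bf TFA}\to\actfwm(\Gamma,X,\mu)$ with $h(A)\conj\sigma_{\hat A}\times\hat\rho_n$ is a genuine Borel reduction of $\simeq^{\bf TFA}$ to $\conj$, but only a \emph{countable-to-one} Borel reduction (in the sense of $\leq_B^\N$) of $\simeq^{\bf TFA}$ to $\oeq$ and to $\vneq$. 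Your step~(4) would require this to be one-to-one on classes, and that is not established.

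This is why the paper needs Theorem~\ref{t.tfa}: one must show separately that any equivalence relation $E$ with $\simeq^{\bf TFA}\leq_B^\N E$ is non-Borel. That argument (via Theorem~\ref{t.ctblto1}, Stern's forcing absoluteness, and Harrington's $\simeq^\alpha$) is the real content behind non-Borelness of $\oeq$ and $\vneq$, and it is entirely absent from your outline. Note also that your conclusion in step~(5) overreaches: the paper does \emph{not} obtain that $\oeq$ and $\vneq$ are complete analytic in ZFC, only that they are analytic and not Borel; see Remark~\ref{r.sdf}, where complete analyticity is left as a conjecture (and is only obtained under extra set-theoretic hypotheses such as the existence of $0^\sharp$). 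Your soft half in step~(1) is fine and matches the paper.
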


Again, the assumptions on $\Gamma$ can be weakened, and we state our results in full in \S 2. Furthermore, we note that the proofs of both Theorem \ref{t.mainintro1} and \ref{t.mainintro2} use entirely different techniques than those used by Foreman, Rudolph and Weiss in \cite{foruwe11}. Namely, our proofs use rigidity techniques, and rely in particular on Popa's cocycle superrigidity theorems \cite{popa07,popa08}. Our arguments are also closer to \cite{tornquist11}, where it was proven that conjugacy and orbit equivalence are complete analytic equivalence relations (on the weakly mixing a.e. free actions) when $\Gamma$ is a countably infinite group with the relative property (T).

As a consequence of Theorem \ref{t.mainintro2}, we obtain the following:

\begin{theorem}\label{t.mainintro3}
The isomorphism relation for separably acting factors of type $\II_\infty$ and type $\III_\lambda$, for each $0\leq\lambda\leq 1$, is analytic but not Borel when the space of separably acting factors is given the Effros Borel structure.
\end{theorem}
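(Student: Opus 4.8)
The plan is to deduce Theorem~\ref{t.mainintro3} from Theorem~\ref{t.mainintro2} by producing Borel reductions of von Neumann equivalence into the isomorphism relation on each of the relevant classes of factors, together with the standard facts that the factors of a prescribed type form a Borel subset of the Effros Borel space of separably acting factors and that isomorphism on that space is analytic (a normal $*$-isomorphism between $M,N\subseteq B(\ell^2)$ is coded by a pair of sequences generating $M$ and $N$, subject to the Borel requirement that the induced partial map extends to a normal $*$-isomorphism). Fix a countable discrete $\Gamma$ as in Theorem~\ref{t.mainintro2} (for concreteness $\Gamma=\F_n$), let $R$ be the hyperfinite $\II_1$ factor, $R_{0,1}=R\otimes B(\ell^2)$ the hyperfinite $\II_\infty$ factor, and let all tensor products be von Neumann tensor products. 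For a weakly mixing a.e.\ free p.m.p.\ action $\sigma$ put $M_\sigma=L^\infty(X,\mu)\rtimes_\sigma\Gamma$; since $\sigma$ is ergodic and a.e.\ free on a non-atomic space, $M_\sigma$ is a factor of type $\II_1$, the assignment $\sigma\mapsto M_\sigma$ (realised concretely on $L^2(X)\otimes\ell^2(\Gamma)$) is Borel into the Effros Borel space, and by the definition of $\vneq$ one has $\sigma\vneq\tau\iff M_\sigma\cong M_\tau$. Hence $\sigma\mapsto M_\sigma$ is already a Borel reduction of $\vneq$ to isomorphism of type $\II_1$ factors, which by Theorem~\ref{t.mainintro2} is therefore analytic and not Borel.

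For type $\II_\infty$ I would compose this with the Borel map $M\mapsto M\otimes B(\ell^2)$, whose range consists of type $\II_\infty$ factors; this is a reduction since in a type $\II_\infty$ factor all finite projections of a given trace are equivalent, and compression by a projection of trace one recovers the distinguished $\II_1$ compression (namely $M_\sigma$ itself), so $M_\sigma\otimes B(\ell^2)\cong M_\tau\otimes B(\ell^2)$ iff $M_\sigma\cong M_\tau$. For the type $\III_\lambda$ classes I would instead compose with $M\mapsto M\otimes Q$, where $Q$ is the Powers factor $R_\lambda$ for $0<\lambda<1$, the Araki--Woods factor $R_\infty$ for $\lambda=1$, and a fixed hyperfinite type $\III_0$ factor (a Krieger factor of an aperiodic ergodic flow, say) for $\lambda=0$. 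These maps are Borel, and since $M_\sigma$ is a finite factor and $Q$ is type $\III_\lambda$, the behaviour of Connes' $S$-invariant, or of the flow of weights, under tensoring with a semifinite factor shows $M_\sigma\otimes Q$ is again of type $\III_\lambda$. To see these are reductions I would argue in two steps. First, each $Q$ is hyperfinite, hence McDuff ($Q\cong Q\otimes R$): so $M_\sigma\cong M_\tau$ forces $M_\sigma\otimes R\cong M_\tau\otimes R$, whence $M_\sigma\otimes Q\cong (M_\sigma\otimes R)\otimes Q\cong (M_\tau\otimes R)\otimes Q\cong M_\tau\otimes Q$. Second, the discrete decomposition of Connes (for $0<\lambda<1$) and the continuous decomposition of Takesaki (for $\lambda\in\{0,1\}$) are canonical, and the core of $M_\sigma\otimes Q$ is $M_\sigma\otimes(\text{core of }Q)$: the $\II_\infty$ factor $M_\sigma\otimes R_{0,1}$ when $\lambda\in(0,1]$, and, when $\lambda=0$, the type $\II_\infty$ algebra $M_\sigma\otimes R_{0,1}\otimes L^\infty(F_0)$ (with $F_0$ the base of the flow of weights), whose central disintegration has every fibre isomorphic to $M_\sigma\otimes R_{0,1}$. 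Thus an isomorphism $M_\sigma\otimes Q\cong M_\tau\otimes Q$ induces --- via a measurable selection of fibrewise isomorphisms in the $\III_0$ case --- an isomorphism $M_\sigma\otimes R_{0,1}\cong M_\tau\otimes R_{0,1}$, and compression by a trace-one projection yields $M_\sigma\otimes R\cong M_\tau\otimes R$. Combining the two steps, $M_\sigma\otimes Q\cong M_\tau\otimes Q$ iff $M_\sigma\otimes R\cong M_\tau\otimes R$.

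Modulo these (routine) Borelness verifications, Theorem~\ref{t.mainintro3} thus reduces --- for the $\III_\lambda$ classes; the $\II_\infty$ case is already settled above --- to showing that the relation ``$M_\sigma\otimes R\cong M_\tau\otimes R$'' on the weakly mixing a.e.\ free p.m.p.\ $\Gamma$-actions is analytic and not Borel. Analyticity is clear. Non-Borelness, which is formally stronger than Theorem~\ref{t.mainintro2}, is the step I expect to be the main obstacle. I would establish it by going into the proof of Theorem~\ref{t.mainintro2}: Popa's cocycle superrigidity theorems \cite{popa07,popa08} in fact give that, for the actions produced by the reduction witnessing Theorem~\ref{t.mainintro2}, any stable isomorphism of the associated group-measure-space factors already implies stable orbit equivalence, and hence cocycle conjugacy, of the actions; so on that family the relations $\conj$, $\oeq$, $\vneq$ and stable von Neumann equivalence all coincide, and the same reduction therefore witnesses non-Borelness of stable von Neumann equivalence --- precisely the mechanism used in \cite{tornquist11}. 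An alternative route is to observe that $M_\sigma\otimes R\cong M_{\sigma\times\rho_0}$ for $\rho_0$ a fixed weakly mixing p.m.p.\ action of an infinite amenable group $\Lambda$, which turns the claim into an instance of Theorem~\ref{t.mainintro2} for $\Gamma\times\Lambda$ (still containing $\F_n$ as a normal subgroup), provided the reduction there is arranged to have range among the actions $\sigma\times\rho_0$. With this in hand, $\sigma\mapsto M_\sigma\otimes B(\ell^2)$ and $\sigma\mapsto M_\sigma\otimes Q$ are the desired Borel reductions, showing that isomorphism of separably acting factors of type $\II_\infty$ and of type $\III_\lambda$, $0\le\lambda\le1$, is analytic and not Borel for the Effros Borel structure.
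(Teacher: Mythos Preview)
Your overall strategy---tensor the group-measure-space $\II_1$ factor with $B(\ell^2)$ for the $\II_\infty$ case and with a fixed injective $\III_\lambda$ factor for the type $\III$ cases---is exactly the route the paper takes. The difference lies in how the ``reduction'' direction is justified, and here your argument has a genuine gap.

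For $\II_\infty$, your claim that ``compression by a projection of trace one recovers $M_\sigma$ itself'' is not valid without further input: an isomorphism $M_\sigma\otimes B(\ell^2)\cong M_\tau\otimes B(\ell^2)$ need not respect the normalisation of the semifinite trace, so a trace-one projection on the left may be sent to a projection of trace $t\neq 1$ on the right, yielding only $M_\sigma\cong M_\tau^t$, i.e.\ stable isomorphism. This is precisely the fundamental-group obstruction, and it is not removed by anything in Theorem~\ref{t.mainintro2} as stated. The same issue resurfaces in your $\III_\lambda$ argument, where you correctly note that the continuous/discrete decomposition only brings you back to $M_\sigma\otimes R_{0,1}$, hence to stable isomorphism of the $M_\sigma$'s. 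Your proposed repairs are not quite right either: the paper does \emph{not} establish that $\conj$, $\oeq$, $\vneq$ and stable $\vneq$ coincide on the constructed family---Ioana's theorem only gives that each $\vneq$-class meets the family in at most countably many conjugacy classes---so ``the same reduction witnesses non-Borelness of stable von Neumann equivalence'' does not follow from what is proved.

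The paper circumvents all of this by working not with arbitrary actions but with the specific family $\tilde h(A)=L^\infty(X)\rtimes_{h(A)}\F_n$, $A\in{\bf TFA}$, produced in the proof of Theorem~\ref{t.mainthm2v2}, and invoking the key structural fact (via \cite[Theorem 4.3]{ioana11}) that these are Popa's $\mathcal{HT}$-factors. Popa's rigidity results for $\mathcal{HT}$-factors, as packaged in \cite[Theorem 9]{sato09}, give directly that $\tilde h(A)\otimes B(H)\cong\tilde h(A')\otimes B(H)$ iff $\tilde h(A)\cong\tilde h(A')$, and likewise $\tilde h(A)\otimes N\cong\tilde h(A')\otimes N$ iff $\tilde h(A)\cong\tilde h(A')$ for any injective type $\III_\lambda$ factor $N$. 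Thus $A\mapsto\tilde h(A)\otimes B(H)$ and $A\mapsto\tilde h(A)\otimes N$ are Borel \emph{countable-to-one} reductions of $\simeq^{\bf TFA}$ to isomorphism in the relevant class of factors, and the paper's Theorem~\ref{t.tfa} (the Stern-absoluteness result on countable-to-one reductions) finishes. The $\mathcal{HT}$ property is doing the work that your compression/core argument cannot do on its own.
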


It was shown in \cite{sato09} that the isomorphism relation for separably acting factors of type $\II_1$ is in fact complete analytic, however the argument there did not extend to factors of type $\II_\infty$ and $\III_\lambda$.

\medskip

The proof of both Theorem \ref{t.mainintro2} and \ref{t.mainintro3} relies crucially on establishing a technical result in the theory of Borel reducibility, Theorem \ref{t.ctblto1}, which shows that there is a sequence $E_\alpha$, $\alpha<\omega_1$, of Borel equivalence relations which is increasing and unbounded in the class Borel equivalence relations, when this class is ordered under the relation of \emph{countable-to-1} Borel reductions. This result, which is proved using Stern's absoluteness method from \cite{stern84}, generalizes a similar result due to Harrington for the usual Borel reducibility hierarchy.

\medskip

The paper is organized as follows: \S 2 is dedicated to preliminaries and background, as well as the statement of our results in full: We review Popa's cocycle superrigidity theorems and the related concepts, and we also briefly review the descriptive set theory (``Global theory'') of measure preserving actions, as well as the concept of Borel reducibility. In \S 3 we introduce a variant of the 1-cohomology group, called the \emph{relative 1-cohomology group}, which will be our main tool to distinguish actions up to conjugacy. In \S 4 we compute the relative 1-cohomology group for certain families of actions. \S 5 is dedicated to establishing the technical result on countable-to-1 Borel reductions described above. Finally, in \S 6 we combine the results of \S 4 and \S 5 to prove Theorem \ref{t.mainintro1}, \ref{t.mainintro2} and \ref{t.mainintro3}.

{\it Acknowledgements.} We wish to thank Alexander Kechris for useful discussions through the early stages of this work. We also thank Benjamin Weiss for his remarks on a version of this paper that was circulated in the summer of 2011.

Asger T\"ornquist is grateful for the kind hospitality and support he received during a visit to Caltech in February 2010 where part of the work for the paper was done. Thanks are also due to the Kurt G\"odel Research Center in Vienna, where part of the paper was written while Asger T\"ornquist was employed there under FWF project P 19375-N18.

Asger T\"ornquist was supported by a Sapere Aude fellowship (level 2) from Denmark's Natural Sciences Research Council, no. 10-082689/FNU, and a Marie Curie re-integration grant, no. IRG-249167, from the European Union.

\section{Preliminaries and statement of results}

\subsection{Global theory} The main reference for the global theory of measure preserving actions is Kechris' book \cite{kechris10}.

Let $(X,\mu)$ be a standard Borel probability space. The group of measure preserving transformations of $(X,\mu)$ is denoted $\Aut(X,\mu)$. We equip this group with the weak topology, i.e., the initial topology making all the maps
$$
\Aut(X,\mu)\to [0,1]: T\mapsto \mu(T(A)\triangle B)
$$
continuous, where $A,B\subseteq X$ are Borel sets. This makes $\Aut(X,\mu)$ a Polish group. Let $\Gamma$ be a countable group. The set
$$
\act(\Gamma,X,\mu)=\{\sigma\in\Aut(X,\mu)^\Gamma: (\forall\gamma_1,\gamma_2\in\Gamma) \sigma(\gamma_1\gamma_2)=\sigma(\gamma_1)\sigma(\gamma_2)\}
$$
is closed in the product topology, and each $\sigma\in\act(\Gamma,X,\mu)$ defines a measure preserving action of $\Gamma$ on $(X,\mu)$ almost everywhere. We call the Polish space $A(\Gamma,X,\mu)$ the 
\emph{space of measure preserving actions of $\Gamma$}.\footnote{More correctly, the space $\act(\Gamma,X,\mu)$ should be called the space of boolean actions of $\Gamma$, since each $\sigma\in A(\Gamma,X,\mu)$ only determines an action almost everywhere.} Following \cite{kechris10}, let FR$(\Gamma,X,\mu)$ denote the subset of $\act(\Gamma,X,\mu)$ consisting of a.e. free $\Gamma$-actions, WMIX$(\Gamma,X,\mu)$ the set of weakly mixing $\Gamma$-actions, and let
$$
\actfwm(\Gamma,X,\mu)={\rm FR}(X,\Gamma,\mu)\cap {\rm WMIX}(\Gamma,X,\mu)
$$
be the set of a.e. free weakly mixing actions. These sets are Borel. The conjugacy relation, denoted $\conj$, in $\act(\Gamma,X,\mu)$ is defined by
$$
\sigma_0\conj\sigma_1\iff (\exists T\in\Aut(X,\mu))(\forall\gamma\in\Gamma) T\sigma_0(\gamma)T^{-1}=\sigma_1(\gamma).
$$
The relations $\oeq$ and $\vneq$ were defined in the introduction.

\begin{lemma}
The relations $\conj$, $\oeq$ and $\vneq$ are analytic.
\end{lemma}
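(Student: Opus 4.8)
The plan is to verify analyticity of each of the three relations by exhibiting a definition that quantifies existentially over a standard Borel (indeed Polish) space, with a Borel matrix. The space $\Aut(X,\mu)$ is Polish, so the quantifier $\exists T\in\Aut(X,\mu)$ is harmless provided the relation inside is Borel. For $\conj$ this is immediate: the map $(\sigma_0,\sigma_1,T)\mapsto$ (truth value of $T\sigma_0(\gamma)T^{-1}=\sigma_1(\gamma)$) is, for each fixed $\gamma$, Borel in the three variables (multiplication and inversion in $\Aut(X,\mu)$ are continuous, and equality is closed), and $\conj$ is obtained by taking a countable intersection over $\gamma\in\Gamma$ and then projecting along the Polish coordinate $T$. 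Hence $\conj$ is analytic.

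For $\oeq$ I would first rewrite the defining condition "$xE_{\sigma_0}x' \iff T(x)E_{\sigma_1}T(x')$ for a.e. $x,x'$" in a form amenable to the global theory. The standard trick (as in \cite{kechris10}) is to observe that a measure preserving Borel bijection $T$ conjugates $E_{\sigma_0}$ into $E_{\sigma_1}$ if and only if for every $\gamma\in\Gamma$ there is a countable Borel partition $X=\bigsqcup_n A^\gamma_n$ and elements $\delta^\gamma_n\in\Gamma$ with $T\sigma_0(\gamma)T^{-1}(x)=\sigma_1(\delta^\gamma_n)(x)$ for $x\in A^\gamma_n$, and symmetrically with the roles of $\sigma_0,\sigma_1$ reversed. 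One can package the partition-and-labels data as a single element of a Polish space (e.g. a Borel map $X\to\Gamma$, coded as an element of the Polish space of measurable functions $X\to\Gamma$ with $\Gamma$ discrete, i.e. a countable power of $\malg$), so that "$T$ witnesses $\sigma_0\oeq\sigma_1$" becomes "$\exists$ such coding data making countably many a.e.-equalities hold", each of which is Borel in $(\sigma_0,\sigma_1,T,\text{data})$. Projecting along the Polish variables $T$ and the coding data then shows $\oeq$ is analytic.

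For $\vneq$ I would reduce to $\oeq$ together with a cocycle. By the fundamental theorem of Singer/Feldman--Moore, $L^\infty(X)\rtimes_{\sigma_0}\Gamma\cong L^\infty(X)\rtimes_{\sigma_1}\Gamma$ (via an isomorphism that need not respect the Cartan subalgebras) is governed by the notion of \emph{stable orbit equivalence with a cocycle perturbation}; concretely, $\sigma_0\vneq\sigma_1$ iff there exist an orbit equivalence $T$ of $\sigma_0$ with $\sigma_1$ (possibly after passing to the ergodic decomposition / using a stable orbit equivalence on $X\times\Z$, which is again a standard Borel gadget) and a Borel cocycle $c:E_{\sigma_1}\to\T$ (equivalently a map twisting the groupoid), such that the induced twisted algebras agree — but on the level of \emph{definability} the point is simply that all the extra data (the isomorphism, decoded as a measure preserving map plus a cocycle $\Gamma\times X\to\T$, the latter living in a Polish space since $\T$ is compact) ranges over Polish spaces, and the condition "these data implement an algebra isomorphism" unwinds into countably many Borel conditions on generators $u_\gamma$ and $L^\infty$-elements (checking the $*$-algebra relations and trace-preservation on a countable weak-$*$ dense set). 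Existentially quantifying over these Polish parameter spaces keeps the relation analytic.

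The main obstacle is the last step: making precise, in a Borel way, that a tuple of parameters "implements a von Neumann algebra isomorphism". One must choose a countable generating set for each $L^\infty(X)\rtimes_{\sigma_i}\Gamma$ (the unitaries $u_\gamma$, $\gamma\in\Gamma$, together with a countable $\|\cdot\|_\infty$-dense, hence weak-$*$-dense, $\Q$-$*$-subalgebra of a fixed countable Boolean algebra generating $L^\infty(X)$), express the candidate isomorphism as a map on these generators determined by the parameters, and then assert: it preserves the finitely many algebraic relations among generators, it preserves the canonical trace (checked on the countable dense set via integrals, which are Borel in the action and the parameters), and it is surjective onto a generating set of the target (again a countable conjunction of a.e.-equalities, each Borel). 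Continuity/normality of the extension is then automatic from trace-preservation. Care is needed for the type $\II_\infty$ and $\III_\lambda$ situations in Theorem~\ref{t.mainintro3}, where one works with the Effros Borel structure on the space of separably acting factors and must check that the natural map from (a Borel subset of) $\act(\Gamma,X,\mu)$ sending $\sigma$ to $L^\infty(X)\rtimes_\sigma\Gamma$ is Borel for that structure — but for the present lemma it suffices to handle the relation $\vneq$ on $\act(\Gamma,X,\mu)$ itself, and there the bookkeeping above, though tedious, is routine.
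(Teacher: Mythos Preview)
Your treatment of $\conj$ is correct and matches the paper exactly. Your sketch for $\oeq$ is also essentially correct and is in the spirit of the argument the paper cites from \cite{tornquist11}.

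For $\vneq$, however, there is a genuine gap. You invoke ``Singer/Feldman--Moore'' and assert that an isomorphism $L^\infty(X)\rtimes_{\sigma_0}\Gamma\simeq L^\infty(X)\rtimes_{\sigma_1}\Gamma$ can be ``decoded as a measure preserving map plus a cocycle $\Gamma\times X\to\T$''. That theorem only describes isomorphisms that carry the distinguished Cartan subalgebra $L^\infty(X)$ onto $L^\infty(X)$; a general $*$-isomorphism of the crossed products need not do this (indeed, group measure space factors can have many non-conjugate Cartan subalgebras). So the Polish parameter space you propose---orbit equivalence data plus a $\T$-valued cocycle---is too small: existentially quantifying over it yields a relation strictly contained in $\vneq$, not $\vneq$ itself. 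Your final paragraph gestures toward a different idea (send generators to elements of the target and check relations), but as written the ``parameters'' are still the orbit-equivalence-plus-cocycle data, so the argument does not go through.

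The paper's route, via \cite{sato09}, avoids this by passing through the Effros Borel space $\vN(H)$: first one shows the assignment $\sigma\mapsto L^\infty(X)\rtimes_\sigma\Gamma$ is Borel from $\act(\Gamma,X,\mu)$ into $\vN(H)$ (this is \cite[Lemma~5]{sato09}), and then one uses that isomorphism of von Neumann algebras in $\vN(H)$ is analytic (as in the proof of \cite[Corollary~15]{sato09}). In that second step the existential witness is a unitary on a fixed separable Hilbert space implementing a spatial isomorphism of suitable amplifications (or equivalently of the algebras in standard form), which \emph{does} range over a Polish space and captures all $*$-isomorphisms, not just the Cartan-preserving ones. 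If you want to avoid the Effros machinery, the honest fix is to let the witness be the sequence of images of a countable generating set, taken as elements of norm-bounded balls of $B(L^2(X\times\Gamma))$ with the strong operator topology---but then you must also quantify over these images, not over orbit-equivalence data.
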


\begin{proof}
For $\conj$ this is immediate from the definition. For $\oeq$ this was proven in \cite[Lemma 5.5]{tornquist11}. For $\vneq$, it follows by \cite[Lemma 5]{sato09} and the proof of \cite[Corollary 15]{sato09}.
\end{proof}

It should be emphasized that the previous Lemma does \emph{nothing} to rule out the possibility that $\conj$, $\oeq$ or $\vneq$ could be Borel.

\medskip

\noindent{\it Notation}: For any set $X$, we use the notation $\sigma:\Gamma\actson X$ to mean that $\sigma$ is an action of $\Gamma$ on $X$. If $(X,\mu)$ is a standard Borel probability space, we write $\sigma:\Gamma\actson (X,\mu)$  to mean that $\sigma$ is a Borel action of $\Gamma$ on $X$ which preserves $\mu$ (thus this is almost, but not quite, synonymous with the statement $\sigma\in\act(\Gamma,X,\mu)$.) If $\Lambda\leq\Gamma$ and $\sigma:\Gamma\actson X$ is an action, then $\sigma\restrict\Lambda$ and $\Lambda\actson^\sigma X$ are both notation for the \emph{restriction} of the action $\sigma$ to the subgroup $\Lambda$. For $\sigma:\Gamma \actson X$, we will write $\sigma(\gamma)(x)$ or $\gamma\cdot_{\sigma} x$ for the action by $\sigma$ of $\gamma$ on $x\in X$.

\subsection{Cocycle superrigidity for malleable actions} We now review the cocycle superrigidity Theorems of Popa (including the relative formulations given by Furman in \cite{furman07}, which may be consulted for further background), and fix our terminology. At the end of this section we state the main results of the paper in full.

Let $\Gamma$ be a countable discrete group, $(X,\mu)$ a standard Borel probability space, $\sigma:\Gamma\actson (X,\mu)$ a p.m.p. action, and let $G$ be a topological group. Recall that a cocycle of $\sigma$ with target group $G$ is a measurable map $\alpha:\Gamma\times X\to G$ that satisfies the cocycle identity
$$
(\forall\gamma_1,\gamma_2\in\Gamma)\alpha(\gamma_1\gamma_2,x)=\alpha(\gamma_1,\gamma_2\cdot_\sigma x)\alpha(\gamma_2,x)
$$
almost everywhere.

Let $\sigma:\Gamma\actson (X,\mu)$ as before, let $(Y,\nu)$ be a standard probability space, and suppose $\rho:\Gamma\actson Y$ is a quotient (extension) of $\sigma$ with quotient map $p:X\to Y$. Let $Y\mapsto P(X):y\mapsto\mu_y$ be the disintegration of $\mu$ with respect to $p$ (see e.g. \cite[17.35]{kechris95}.) The diagonal product action $\sigma\times\sigma:\Gamma\actson X\times X$ preserves the \emph{fibered product space}\footnote{Our notation differs slightly from that of Furman, who uses $X\times_Y X$ and $\mu\times_\nu\mu$ for what we have chosen to call $X\times_p X$ and $\mu\times_p\mu$. This makes explicit the dependence on the quotient map $p$.}
$$
X\times_p X=\{(x_1,x_2)\in X\times X: p(x_1)=p(x_2)\}
$$
and preserves the measure
$$
\mu\times_p\mu=\int \mu_y\times\mu_y d\nu(y),
$$
see \cite{furman07} or \cite[6.3]{glasner03}. We denote this action by $\sigma\times_p\sigma:\Gamma\actson X\times_p X$.

\begin{definition}[See {\cite[1.a]{furman07}}] Let $\sigma:\Gamma\actson (X,\mu)$, $\rho:\Gamma\actson (Y,\nu)$, $p:X\to Y$ be as above, and let $\mathscr C$ be a class of topological groups (``target groups'').


1. We say that $\sigma$ is \emph{weakly mixing relative to} $(p,\rho)$ if $\sigma\times_p\sigma\actson (X\times_p X,\mu\times_p\mu)$ is ergodic. Note that $\sigma$ is weakly mixing relative to the trivial quotient (i.e., where $\rho$ is the action on a single point) precisely when it is weakly mixing.

2. We say that $\sigma$ is \emph{malleable relative to} $(p,\rho)$ if the flip $(x_1,x_2)\mapsto (x_2,x_1)$ is in the connected component of the identity in $\Aut(X\times_p X,\mu\times_p\mu)$. We will say that $\sigma$ is \emph{malleable} if it is malleable relative to the trivial quotient.

3. The action $\sigma$ is said to be \emph{$\mathscr C$-cocycle superrigid relative to $(p,\rho)$} if any measurable cocycle $\alpha:\Gamma\times X\to G$ with target group $G\in\mathscr C$ is cohomologous to a cocycle of $\rho$. That is, there is $f:X\to G$ measurable and a measurable cocycle $\vartheta: \Gamma\times Y\to G$ of the action $\rho$ such that
$$
\vartheta(\gamma,p(x))=f(\gamma\cdot_\sigma x)\alpha(\gamma,x)f(x)^{-1}.
$$

4. A subgroup $\Lambda\leq\Gamma$ is said to be \emph{weakly normal} (or \emph{w-normal}) if there is a wellfounded chain of subgroups $\{\Lambda_i:i\in I\}$ ordered by inclusion, with $\Lambda$ the least element and $\Gamma$ the largest, and so that $\bigcup \{\Lambda_i:\Lambda_i\subset\Lambda_j, i\in I\}$ is normal in $\Lambda_j$, for all $j\in I$.
\end{definition}

The main classes of target groups we consider are $\{\T\}$ (where $\T=\{z\in\C:|z|=1\}$), the class of countable groups $\mathscr G_{\text{ctbl}}$, and the class $\mathscr U_{\text{fin}}$ of groups realizable as a closed subgroup of the unitary group of a finite separable von Neumann algebra. The following theorem is Popa's cocycle superrigidity theorem for w-rigid groups (with Furman's relative formulation from \cite{furman07}, which we make crucial use of in this paper.)

\begin{theorem}[Popa, {\cite{popa07}}]\label{popa_csr}
Let $\mathscr C=\mathscr U_{\rm fin}$. Let $\Gamma$ be a countable discrete group, and let $\sigma:\Gamma\actson (X,\mu)$ be a measure preserving action with quotient $\rho:\Gamma\actson (Y,\nu)$ and quotient map $p:X\to Y$. Suppose that there is an w-normal subgroup $\Lambda\leq\Gamma$ such that $\Gamma$ has the relative property (T) over $\Lambda$. If $\sigma\restrict\Lambda$ is weakly mixing and malleable relative to $(p,\rho)$ then $\sigma$ is $\mathscr C$-cocycle superrigid relative to $(p,\rho)$. In particular, if $\sigma$ is malleable and weakly mixing on $\Lambda$ then any measurable cocycle with target group $G\in\mathscr U_{\text{fin}}$ is cohomologous to a group homomorphism $\vartheta:\Gamma\to G$.
\end{theorem}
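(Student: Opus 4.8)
The statement is Popa's cocycle superrigidity theorem in Furman's relative formulation, and the natural route is Popa's \emph{deformation/rigidity} method. Fix a measurable cocycle $\alpha:\Gamma\times X\to G$ with $G$ a closed subgroup of the unitary group $\mathcal U(\mathcal N)$ of a finite separable von Neumann algebra $(\mathcal N,\tau)$. The argument has three movements: a deformation step producing an ``untwisting'' over the rigid subgroup $\Lambda$; a relative-weak-mixing step upgrading this to cohomology with an honest cocycle of $\rho\restrict\Lambda$; and a propagation step transporting the conclusion up the w-normal chain from $\Lambda$ to $\Gamma$.

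\emph{Deformation.} Pull $\alpha$ back along the two coordinate projections $X\times_p X\to X$ to obtain cocycles $\alpha_i(\gamma,(x_1,x_2))=\alpha(\gamma,x_i)$ of $\sigma\times_p\sigma$, so that $\alpha_2=\alpha_1\circ\theta$ with $\theta$ the flip. For $V\in\Aut(X\times_p X,\mu\times_p\mu)$ consider the unitary $\Gamma$-representation $\pi_V$ on $L^2(X\times_p X)\bar\otimes L^2(\mathcal N)$ twisted by the $G$-valued cocycle $(\gamma,z)\mapsto\alpha_1(\gamma,z)\,\alpha_1(\gamma,V^{-1}z)^{-1}$. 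By malleability relative to $(p,\rho)$ there is a norm-continuous path $\theta_t$ in $\Aut(X\times_p X,\mu\times_p\mu)$ with $\theta_0=\id$ and $\theta_1=\theta$; for $t$ small the constant vector $1\otimes\hat 1$ is almost invariant for $\pi_{\theta_t}$ on any prescribed finite subset of $\Gamma$. Relative property (T) of $\Gamma$ over $\Lambda$ then yields a nonzero $\Lambda$-invariant vector, and---since $\mathcal N$ is finite---a trace/averaging argument bounds this vector fiberwise and a polar-decomposition plus spectral rounding converts the resulting partial isometry into a genuine measurable function into $G$; this shows $\alpha_1\restrict\Lambda$ and $(\alpha_1\circ\theta_t)\restrict\Lambda$ are cohomologous as cocycles of $(\sigma\times_p\sigma)\restrict\Lambda$. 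A connectedness argument---the set of $t$ with $\alpha_1\restrict\Lambda$ cohomologous to $(\alpha_1\circ\theta_t)\restrict\Lambda$ is open, closed, and contains $0$---pushes this to $t=1$, so there is a measurable $w:X\times_p X\to G$ with
$$
\alpha(\gamma,x_1)=w(\gamma\cdot_\sigma x_1,\gamma\cdot_\sigma x_2)\,\alpha(\gamma,x_2)\,w(x_1,x_2)^{-1}\qquad(\gamma\in\Lambda).
$$

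\emph{Weak mixing over $\Lambda$.} Feeding this identity and the cocycle identity into the $\Lambda$-action on the triple fibered product $X\times_p X\times_p X$, a standard computation shows that an appropriate $G$-valued combination of the values $w(x_i,x_j)$ is $\Lambda$-invariant up to inner conjugation by $\alpha$; relative weak mixing of $\sigma\restrict\Lambda$ makes the triple action ergodic, and Popa's untwisting argument then produces a measurable $f:X\to G$ with $w(x_1,x_2)=f(x_1)^{-1}f(x_2)$ (after normalizing along a fixed Borel section of $p$). Substituting back, $\vartheta(\gamma,p(x)):=f(\gamma\cdot_\sigma x)\alpha(\gamma,x)f(x)^{-1}$ is independent of the choice of $x$ in its $p$-fiber and is a cocycle of $\rho\restrict\Lambda$; thus $\alpha\restrict\Lambda$ is cohomologous, via $f$, to the pullback of a cocycle of $\rho\restrict\Lambda$.

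\emph{Propagation and conclusion.} Replacing $\alpha$ by the cohomologous cocycle $\alpha'(\gamma,x)=f(\gamma\cdot_\sigma x)\alpha(\gamma,x)f(x)^{-1}$, we may assume $\alpha'$ factors through $p$ on $\Lambda$. Along the w-normal chain $\{\Lambda_i:i\in I\}$ witnessing w-normality, argue transfinitely: if $\alpha'$ factors through $p$ on $\Lambda'=\bigcup\{\Lambda_i:\Lambda_i\subsetneq\Lambda_j\}$, which is normal in $\Lambda_j$, then for $\gamma\in\Lambda_j$ and $\lambda\in\Lambda'$ the cocycle identity applied to $\gamma\lambda\gamma^{-1}\in\Lambda'$ shows $x\mapsto\alpha'(\gamma,x)$ intertwines two $p$-factoring $\Lambda'$-cocycles; since $\sigma\restrict\Lambda'$ is still weakly mixing relative to $(p,\rho)$, a repeat of the weak-mixing argument forces this transfer function to be constant along $p$-fibers, so after a further coboundary $\alpha'$ factors through $p$ on $\Lambda_j$, and one checks coherence so that the result is a cocycle of $\rho\restrict\Lambda_j$; unions handle limit stages. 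At the top we obtain that $\alpha$ is cohomologous to the pullback of a cocycle $\vartheta$ of $\rho$; when $\rho$ is the one-point action $\vartheta$ carries no space variable and its cocycle identity degenerates to the homomorphism identity, giving the final clause.

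\emph{Main obstacle.} The crux is the deformation step: the invariant vector supplied by relative property (T) a priori lives only in the Hilbert bimodule, and it must be converted into a genuinely measurable cohomology with values in $G$. Controlling the fiberwise norm of that vector using the trace, and rounding the resulting partial isometry to a unitary in $G$ by a spectral cutoff, is exactly where finiteness of $\mathcal N$ is indispensable, and is the reason the target class must be $\mathscr U_{\mathrm{fin}}$. Keeping the deformation $\Gamma$-equivariant while invoking rigidity only for the subgroup $\Lambda$, and the transfinite bookkeeping in the propagation step, are delicate but routine within this framework.
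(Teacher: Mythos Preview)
The paper does not prove this theorem: it is stated as a background result, attributed to Popa (with Furman's relative formulation), and used as a black box throughout \S 4. There is therefore no ``paper's own proof'' to compare your attempt against.

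That said, your sketch is a faithful outline of Popa's deformation/rigidity argument as it appears in the original source: the three movements you describe---deforming along the malleability path and using relative property (T) to find an invariant vector, exploiting relative weak mixing on the fibered product to factor the transfer function, and propagating along the w-normal chain---match the architecture of Popa's proof. Your identification of the ``main obstacle'' (converting the Hilbert-module invariant vector into a genuine $G$-valued cohomology, which is where finiteness of $\mathcal N$ enters) is also accurate. A couple of points are compressed more than a full proof would permit: the connectedness argument requires knowing the cohomology relation is both open and closed in $t$, which in Popa's paper uses a separate local-rigidity lemma; and the propagation step needs more care to show that the coboundary adjustments at successive stages can be made coherent (one does not in general get relative weak mixing for \emph{every} intermediate subgroup in the chain---Popa handles this differently). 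But as a high-level roadmap of the cited result, your outline is sound.
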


\begin{remark}

In \cite{popa08}, Popa proved a second cocycle superrigidity theorem under a rather different set of hypotheses. While Theorem \ref{popa_csr} suffices for most of our purposes, the second cocycle superrigidity theorem does allow us to carry out our calculations in \S 4 under a different set of hypothesis. We refer the reader to Popa's paper \cite{popa08} for the statement of the second cocycle superrigidity theorem, as well as the definition of the notions \emph{s-malleable} actions and \emph{spectral gap}.

Recently, Peterson and Sinclair \cite{pesi11} have obtained a cocycle superrigidity theorem which can be viewed as a simultaneous generalization of both of Popa's cocycle superrigidity theorems.
\end{remark}

\begin{definition}
Let $\mathscr C$ be a class of target groups. We will say that a group $\Gamma$ has \emph{strongly} $\mathscr C$-superrigid malleable (or s-malleable) weakly mixing actions if the conclusion of Theorem \ref{popa_csr} holds for any $\sigma:\Gamma\actson (X,\mu)$ and any quotient $\rho:\Gamma\actson (Y,\nu)$ with respect to which $\sigma$ is weakly mixing and malleable.
\end{definition}

With this definition, it follows from Theorem \ref{popa_csr} that any group $\Gamma$ with property (T) has strongly $\mathscr U_{\rm fin}$-superrigid malleable weakly mixing actions. It would of course be possible to introduce a relative version of the previous definition which captures Theorem \ref{popa_csr} exactly, but this would make our terminology unnecessarily cumbersome below, so we will refrain from doing so. 

\begin{definition}
Let $\Lambda$ be a countably infinite discrete group and suppose $\Delta\leq\Lambda$. We will say that an action $\sigma_0:\Lambda\actson \N$ is \emph{$\Delta$-suitable} if all $\sigma_0$-orbits are infinite, but $\sigma_0\restrict\Delta$ has a finite orbit.
\end{definition}

A typical example of a suitable action will be the following: Suppose $\Delta\lhd\Lambda$ as above, and that $\Lambda\leq\Gamma$, where $\Gamma$ is some larger countably infinite discrete group. Then if $\Lambda\lhd\Gamma$, then the action of $\Lambda$ of the left cosets $\Gamma/\Delta$ is easily seen to be a $\Delta$-suitable action of $\Lambda$. (See Corollary \ref{c.almostnormal} for a slightly more general statement.)

Having introduced these concepts, we can now state the results of this paper in full.

\begin{theorem}\label{t.mainthm1}
Let $\Gamma$ be a countably infinite discrete group, and suppose that $\Delta\lhd\Lambda\leq\Gamma$ are subgroups such that $\Lambda/\Delta$ has strongly $\{\T\}$-cocycle superrigid weakly mixing malleable actions. Suppose $\Gamma$ admits an action $\sigma_0:\Gamma\actson\N$ such that $\sigma_0\restrict\Lambda$ is $\Delta$-suitable. Then the conjugacy relation for measure preserving weakly mixing a.e. free $\Gamma$-actions is complete analytic (in the sense of \cite[22.9]{kechris95}) as a subset of $\actfwm(\Gamma,X,\mu)^2$.
\end{theorem}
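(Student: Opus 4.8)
The plan is to produce a Borel reduction of a complete analytic equivalence relation to $\conj$ restricted to $\actfwm(\Gamma,X,\mu)$; since $\conj$ is analytic by the Lemma above, this yields that it is complete analytic as a subset of $\actfwm(\Gamma,X,\mu)^2$. As source I would use the isomorphism relation $\cong$ on the Polish space $\mathrm{Tr}$ of trees on $\N$, which is complete analytic already as a subset of $\mathrm{Tr}^2$ by a standard coding argument (one reduces the $\mathbf{\Sigma}^1_1$-complete set of ill-founded trees to it). Thus it is enough to construct, in a way depending Borel-measurably on $T\in\mathrm{Tr}$, an action $\sigma_T\in\actfwm(\Gamma,X,\mu)$ such that $T\cong T'$ if and only if $\sigma_T\conj\sigma_{T'}$.

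For the construction I would start from a generalized Bernoulli-type action built on the given data: using $\sigma_0\colon\Gamma\actson\N$ with $\sigma_0\restrict\Lambda$ $\Delta$-suitable, form the shift action of $\Gamma$ on $K^{\N}\times K^{\Gamma}$ (with $K=[0,1]$, say, or $K=\T$), where $\Gamma$ permutes the first block via $\sigma_0$ and acts Bernoulli-wise on the second. The $K^{\Gamma}$-factor makes this action a.e. free, and since $\sigma_0\restrict\Lambda$ has only infinite orbits its restriction to $\Lambda$ is weakly mixing; moreover the finite $\sigma_0\restrict\Delta$-orbit provided by $\Delta$-suitability yields a $\Gamma$-equivariant quotient $\rho\colon\Gamma\actson(Y,\nu)$, with quotient map $p$, relative to which this base action is malleable on $\Lambda$ and whose fibrewise data is governed by the quotient group $\Lambda/\Delta$ (here $\Delta\lhd\Lambda$ is used). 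Onto this base I would graft the tree $T$ by a skew product: encode $T$ as $\Gamma$-invariant combinatorial data indexed over the relevant coordinates and twist the base action by an associated $\T$-valued cocycle $c_T$, obtaining $\sigma_T$. One checks that $\sigma_T$ is still a.e. free and weakly mixing as a $\Gamma$-action, that $\sigma_T\restrict\Lambda$ remains weakly mixing and malleable relative to $(p,\rho)$, and that $T\mapsto\sigma_T$ is Borel.

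The conjugacy invariant will be the relative $1$-cohomology group introduced in \S 3, and the heart of the matter is its computation in \S 4. Because $\Lambda/\Delta$ has strongly $\{\T\}$-cocycle superrigid weakly mixing malleable actions, Theorem \ref{popa_csr} (Furman's relative form) applies to $\sigma_T$ relative to $(p,\rho)$: every $\T$-valued cocycle of $\sigma_T$ is cohomologous to one pulled back from $\rho$, the residual freedom being accounted for by the characters of $\Lambda/\Delta$. Plugging this into the definition of the relative $1$-cohomology group, the \S 4 calculation shows that this group, together with the residual $\Gamma$-structure it carries, is a complete invariant for $T$ up to isomorphism. Hence any measure preserving $S$ with $S\sigma_T(\gamma)S^{-1}=\sigma_{T'}(\gamma)$ for all $\gamma$ transports the relative $1$-cohomology group of $\sigma_T$ isomorphically, as a structure, onto that of $\sigma_{T'}$, and therefore $T\cong T'$.

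For the converse, an isomorphism $T\cong T'$ can be implemented by a Borel, measure preserving bijection of the carrier that intertwines the grafted data and the cocycles $c_T,c_{T'}$, hence conjugates $\sigma_T$ to $\sigma_{T'}$; this is the straightforward transport-of-structure direction. Combining the two directions gives the desired reduction and proves the theorem. The main obstacle is the rigidity step: one must set up the family $\sigma_T$ so that the hypotheses of Theorem \ref{popa_csr} genuinely hold for $\sigma_T\restrict\Lambda$ relative to the chosen quotient $(p,\rho)$ — this is where $\Delta$-suitability and malleability of the Bernoulli base are essential — and then extract from superrigidity the \emph{sharp} conclusion that conjugacy recovers $T$ up to isomorphism and not merely up to some coarser equivalence. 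This is exactly what the relative $1$-cohomology group of \S 3 and the computations of \S 4 are built to deliver, so in \S 6 the proof will consist of assembling those results.
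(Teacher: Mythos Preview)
Your proposal has a genuine gap: the source you choose (isomorphism of trees) and the construction you sketch (grafting a tree via a $\T$-valued cocycle skew product) do not match what the machinery of \S\S 3--4 actually delivers. The relative $1$-cohomology group $H^1_{:\Delta}(\sigma)$ is an \emph{abelian group}; the computation in \S 4 (Theorem \ref{t.cohoquotient}) is specifically about actions of the form $\sigma_{\hat A}\times\rho$, where $\sigma_{\hat A}$ is the quotient of a generalized Bernoulli shift by a compact abelian group $\hat A$, and it yields $H^1_{:\Delta}(\sigma_{\hat A}\times\rho)\simeq A\times H^1_{:\Delta}(\rho)$. There is no ``residual $\Gamma$-structure'' on $H^1_{:\Delta}$ in \S 4 that would let you recover a tree from it, and nothing in the paper computes the relative cohomology of a cocycle-twisted action in terms of the tree used to build the cocycle. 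Your assertion that ``the \S 4 calculation shows that this group \ldots\ is a complete invariant for $T$ up to isomorphism'' is simply not what \S 4 proves.

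The paper's actual route is: take the source to be $\simeq^{\bf TFA}$, the isomorphism relation on countable torsion-free abelian groups, which is complete analytic by Downey--Montalb\'an \cite{domo08}. For $A\in{\bf TFA}$, form the quotient $\sigma_{\hat A}$ of the generalized Bernoulli shift on $\hat A^\N$ (built from the given $\sigma_0$) by the rotation action of $\hat A$, and take the product with the ordinary Bernoulli shift $\beta:\Gamma\actson\{0,1\}^\Gamma$ to ensure freeness and that $\beta\restrict\Delta$ is ergodic. Corollary \ref{t.pseudofamily} (built on Theorem \ref{t.cohoquotient}) then gives $H^1_{:\Delta}(\sigma_{\hat A}\times\beta)\simeq A\times\Char(\Lambda/\Delta)$ as topological groups, from which one reads off $A$ up to isomorphism (since $\Char(\Lambda/\Delta)$ is a fixed compact factor and $A$ is discrete torsion-free). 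Lemma \ref{l.Borel} makes $A\mapsto\sigma_{\hat A}\times\beta$ Borel. The point is that the invariant is abelian-group-valued, so the natural source is an isomorphism relation on abelian groups --- and the crucial external input is that $\simeq^{\bf TFA}$ is already known to be complete analytic.
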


\begin{theorem}\label{t.mainthm2}
Let $\Gamma$ be a countably infinite discrete group, and suppose that $\Gamma$ contains a subgroup $\Lambda$ isomorphic to $\F_n$ for some $2\leq n\leq\infty$. Suppose that there is $\Delta\lhd\Lambda$ such that $\Lambda/\Delta$ has strongly $\{\T\}$-cocycle superrigid weakly mixing malleable actions and that $\Gamma$ admits an action $\sigma_0:\Gamma\actson\N$ such that $\sigma_0\restrict\Lambda$ is $\Delta$-suitable. Then the relations of orbit equivalence and von Neumann equivalence of measure preserving a.e. free weakly mixing actions of $\Gamma$ are analytic, but neither are Borel.
\end{theorem}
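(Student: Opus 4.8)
The analyticity of $\oeq$ and $\vneq$ is already given by the Lemma above, so all that must be shown is that neither relation is Borel. The plan is to play the families of actions produced by the relative $1$-cohomology computations of \S 4 against the countable-to-$1$ reducibility hierarchy of Theorem~\ref{t.ctblto1}. For each $\alpha<\omega_1$ I would build, using the action $\sigma_0$ --- and specifically the finite $\Delta$-orbit sitting inside infinite $\Lambda$-orbits that $\Delta$-suitability of $\sigma_0\restrict\Lambda$ provides --- a Borel family $\{\sigma_{\alpha,p}:p\in P_\alpha\}\subseteq\actfwm(\Gamma,X,\mu)$ over a standard Borel space $P_\alpha$, of the type analysed in \S 4, arranged so that the relative $1$-cohomology group of \S 3, taken relative to $\Delta$, is a complete conjugacy invariant on the family and realises enough values that, feeding in the concrete Borel presentation of $E_\alpha$ supplied by \S 5, the map $p\mapsto\sigma_{\alpha,p}$ becomes a countable-to-$1$ Borel reduction of $E_\alpha$ to $\conj$ restricted to the family. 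Here it is the weak mixing and malleability of the relevant $\Lambda/\Delta$-quotient action, together with its strong $\{\T\}$-cocycle superrigidity, that pin down the cohomology; the reduction is only countable-to-$1$, rather than injective, because the \S 5 presentation and the choices involved in realising prescribed cohomology values are redundant in a controlled, countable way.

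The decisive point is then that, on such a family, $\oeq$ and $\vneq$ coarsen $\conj$ by no more than a \emph{countable index}. The reason to expect this is that the relative $1$-cohomology group is in essence a groupoid-cohomological quantity: for a.e.\ free actions the transformation groupoid $\Gamma\ltimes X$ coincides with the orbit-equivalence-relation groupoid $E_{\sigma_{\alpha,p}}$, so the group of $\T$-valued cocycles, and its refinement relative to the $\Lambda$- and $\Delta$-subrelations, is determined by $E_{\sigma_{\alpha,p}}$ together with how those subrelations sit inside it --- and, because $\Lambda\cong\F_n$, the latter is pinned down by the conjugacy class of $\sigma_{\alpha,p}$ up to a countable ambiguity, coming from $\Aut(\Gamma)$ and from the countably many ways the $\Lambda$-subrelation, hence the $\Lambda/\Delta$-structure, can be located inside $E_{\sigma_{\alpha,p}}$. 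Consequently an orbit equivalence between $\sigma_{\alpha,p}$ and $\sigma_{\alpha,q}$ can move $p$ only inside a fixed countable saturation of its $E_\alpha$-class, so $p\mapsto\sigma_{\alpha,p}$ is a countable-to-$1$ Borel reduction into $\oeq$ of a controlled countable extension $E_\alpha^{\dagger}\supseteq E_\alpha$ which --- and this is where one uses the construction of $(E_\alpha)$ in \S 5 rather than the analogous plain Borel hierarchy --- still lies above $E_\alpha$ under countable-to-$1$ reducibility. For $\vneq$ I would run the same argument at the level of the group-measure-space Cartan inclusion $L^\infty(X)\subseteq L^\infty(X)\rtimes_{\sigma_{\alpha,p}}\Gamma$: again the hypothesis $\Lambda\cong\F_n$ supplies the structural input (of uniqueness-of-Cartan type) needed to recover, from a bare $*$-isomorphism of two such $\II_1$ factors, the underlying orbit-equivalence datum up to a scalar $2$-cocycle, an ambiguity that is again countable and can be absorbed into $E_\alpha^{\dagger}$.

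Putting the pieces together: each $E_\alpha$ lies below $E_\alpha^{\dagger}$ under countable-to-$1$ reducibility, and $E_\alpha^{\dagger}$ is countable-to-$1$ Borel reducible both to $\oeq$ and to $\vneq$. By Theorem~\ref{t.ctblto1} the relations $E_\alpha$, $\alpha<\omega_1$, are unbounded among Borel equivalence relations under countable-to-$1$ reducibility, so no Borel equivalence relation can lie above all of them; since $\oeq$ and $\vneq$ would be such relations if they were Borel, they are not.

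The step I expect to be the main obstacle is the countable-index claim of the second paragraph, which splits into two parts. First, the cohomological bookkeeping: reformulating the relative $1$-cohomology group purely in terms of $E_{\sigma_{\alpha,p}}$ and its $\Lambda$- and $\Delta$-subrelations, and controlling exactly how much is lost in passing from the acting group to the orbit-equivalence relation --- this is where the $\{\T\}$-cocycle superrigidity of $\Lambda/\Delta$ and the free-group hypothesis (the latter to recognise the $\Lambda$-subrelation inside $E_{\sigma_{\alpha,p}}$, and for $\vneq$ also the Cartan inside the factor) have to be deployed once more. Second, the reducibility-theoretic engineering: building the sequence $(E_\alpha)$ of \S 5 and the parameter spaces $P_\alpha$ so that the resulting countable extensions $E_\alpha^{\dagger}$ genuinely remain within countable-to-$1$ reach of $E_\alpha$. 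By comparison, the remaining verifications --- that the constructed actions are a.e.\ free and weakly mixing, that all the reduction maps are Borel, and that $\Delta$-suitability yields the required flexibility in the relative $1$-cohomology group --- are of the routine, if lengthy, kind.
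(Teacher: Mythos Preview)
Your overall architecture is the right one---build a Borel family of a.e.\ free weakly mixing $\Gamma$-actions distinguished up to conjugacy by $H^1_{:\Delta}$, and then argue that $\oeq$ and $\vneq$ coarsen $\conj$ on this family by only countable index, so that the map becomes a Borel countable-to-$1$ reduction and Theorem~\ref{t.ctblto1} (via Theorem~\ref{t.tfa}) applies. But the second step, which you correctly flag as the main obstacle, contains a genuine gap.

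The claim that an orbit equivalence between two actions in your family can move the parameter only inside a countable set is \emph{not} a soft groupoid-cohomology fact, and it does not follow from ``$\Aut(\Gamma)$ is countable'' together with ``there are countably many ways to locate the $\Lambda$-subrelation''. In general there is no reason the $\Lambda$-subrelation of a p.m.p.\ equivalence relation is rigid in that sense, nor is the relative $1$-cohomology group an invariant of the orbit equivalence relation alone. Likewise, for $\vneq$ you invoke ``uniqueness-of-Cartan type'' input from $\Lambda\cong\F_n$, but no such result is available at this generality: Cartan uniqueness for arbitrary group-measure-space factors of free-group actions is not known, and in any case the paper does not proceed that way. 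What actually does the work here is Ioana's rigidity theorem (Theorem~\ref{t.ioana}): if each action in the family has, as a quotient of its $\F_n$-restriction, the specific action $\rho_n=\rho\restrict\F_n$ of $\F_n\leq\SL_2(\Z)$ on $\T^2$ (with the freeness condition on the quotient map), then any $\oeq$- or $\vneq$-class meets the family in at most countably many $\F_n$-conjugacy classes. This requires you to \emph{build that quotient into the family from the start}: the paper takes $h(A)\conj\sigma_{\hat A}\times\hat\rho_n$, where $\hat\rho_n$ is the $\Gamma$-action co-induced from $\rho_n$, and Lemma~\ref{l.wmcoind} is used to see that $\hat\rho_n\restrict\Delta$ is weakly mixing so that the cohomology computation of \S4 still goes through. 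Your construction as described has no such ingredient, so there is no mechanism forcing the countable-index conclusion.

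A secondary point: there is no need to produce a separate family for each $E_\alpha$ and then worry about whether the countable blow-up $E_\alpha^{\dagger}$ stays within countable-to-$1$ reach of $E_\alpha$. One single Borel map $h:{\bf TFA}\to\actfwm(\Gamma,X,\mu)$, as in Lemma~\ref{l.Borel}, already gives a Borel reduction of $\simeq^{\bf TFA}$ to $\conj$ (by Corollary~\ref{t.pseudofamily}), and Ioana's theorem then makes that same $h$ a countable-to-$1$ reduction to $\oeq$ and to $\vneq$; Theorem~\ref{t.tfa} finishes. The detour through the $E_\alpha$ and the auxiliary $E_\alpha^{\dagger}$ is unnecessary once you use $\simeq^{\bf TFA}$ as the source.
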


\subsection{Borel reducibility.}

The proofs of Theorems \ref{t.mainthm1} and \ref{t.mainthm2} use techniques from the theory of \emph{Borel reducibility} in their proof, and we now review the basics of this subject that are most important for this paper. This also allows us to restate Theorem \ref{t.mainthm1} and \ref{t.mainthm2} in their final form.

\begin{definition}
Let $X$ and $Y$ be Polish spaces, and let $E$ and $F$ be equivalence relations on $X$ and $Y$, respectively. 

1. We say that $E$ is \emph{Borel reducible} to $F$, written $E\leq_B F$, if there is a Borel function $f:X\to Y$ such that
$$
(\forall x,x'\in X) xEx'\iff f(x) F f(x').
$$
Note that $f$ induces a 1-1 map from the quotient space $X/E$ to $Y/F$.

2. We say that $E$ is \emph{Borel countable-to-1 reducible} to $F$, written $E\leq_B^\N F$, if there is a Borel function function $f:X\to Y$ such that
$$
(\forall x,x'\in X) xEx'\implies f(x) F f(x'),
$$
and for all $y\in Y$ the inverse image $f^{-1}([y]_F)$ of an $F$-equivalence class consists of at most countably many $E$-classes. Note that $f$ induces a countable-to-1 map from $X/E$ to $Y/F$.
\end{definition}

The isomorphism relation for countable abelian groups, and in particular for torsion free countable abelian groups, plays a key role in this paper. We let $\bf ABEL$ and $\bf TFA$ denote the Polish spaces of abelian and torsion free abelian groups with underlying set $\N$. That is, $\bf ABEL$ consists of the triples 
$(c,i,e)\in \N^{\N\times\N}\times\N^\N\times\N$ such that the operation $n+_c m=c(n,m)$ defines an Abelian group structure on $\N$ with inverse given by $i(n)$, and $e$ the neutral element. Then ${\bf ABEL}$ is a closed subset of $\N^{\N\times\N}\times\N^\N\times\N$ when this space has the product topology, and so ${\bf ABEL}$ is Polish in this topology. The set ${\bf TFA}\subset{\bf ABEL}$ is similarly seen to form a Polish space. The isomorphism relation in ${\bf ABEL}$ and ${\bf TFA}$ is denoted $\simeq^{\bf ABEL}$ and $\simeq^{\bf TFA}$, respectively.

\medskip

In \S 6 we will prove the following two theorems, which we will see imply Theorem \ref{t.mainthm1} and \ref{t.mainthm2}. Below and elsewhere we use the following notational convention: For an equivalence relation $E$ on a set $X$ and $A\subseteq X$, we write $E^A$ for the restriction of $E$ to $A$.

\begin{theorem}\label{t.mainthm1v2}
Under the hypotheses of Theorem \ref{t.mainthm1} we have 
$$
\simeq^{\bf TFA}\leq_B\conj^{\actfwm(\Gamma,X,\mu)}.
$$
\end{theorem}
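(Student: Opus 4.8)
The plan is to produce a Borel reduction from $\simeq^{\bf TFA}$ to $\conj^{\actfwm(\Gamma,X,\mu)}$ by attaching to each torsion-free abelian group $A$ (with underlying set $\N$) a weakly mixing a.e.\ free action $\sigma_A:\Gamma\actson(X,\mu)$, in a Borel way, so that $A\cong B$ if and only if $\sigma_A\conj\sigma_B$. The construction I would use is a coinduced/Gaussian-type action built from the $\Delta$-suitable action $\sigma_0:\Lambda\actson\N$. Concretely, given $A\in{\bf TFA}$, I would form a $\Gamma$-action on a space like $(\T^\N)$ or $(\T^{\Gamma\times\N})$ where the group structure of $A$ is encoded as a cocycle: the idea is that the action $\sigma_0\restrict\Lambda$ on $\N$ with its infinite orbits but a finite $\Delta$-orbit lets one manufacture, for each character-theoretic datum coming from $A$, a $\{\T\}$-valued cocycle of the $\Lambda$-action whose relative $1$-cohomology (the invariant introduced in \S 3) is governed by $A$ itself. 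Coinducing from $\Lambda$ to $\Gamma$ (or applying a Gaussian construction) then yields a weakly mixing a.e.\ free $\Gamma$-action; weak mixing and freeness should follow from the fact that all $\sigma_0$-orbits on $\N$ are infinite, plus standard facts about Gaussian/coinduced actions.

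The key steps, in order, are: (1) fix the building blocks — the action $\sigma_0$, the subgroups $\Delta\lhd\Lambda\leq\Gamma$, and the fact that $\Lambda/\Delta$ has strongly $\{\T\}$-cocycle superrigid weakly mixing malleable actions — and define, for $A\in{\bf TFA}$, the cocycle $\alpha_A$ and the resulting action $\sigma_A$, checking that $A\mapsto\sigma_A$ is Borel and lands in $\actfwm(\Gamma,X,\mu)$; (2) show the easy direction: if $A\cong B$ via a group isomorphism, then the isomorphism transports $\alpha_A$ to $\alpha_B$ up to coboundary, hence $\sigma_A\conj\sigma_B$ — this is a direct functoriality computation; (3) show the hard direction: if $\sigma_A\conj\sigma_B$ then $A\cong B$. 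For (3) the strategy is to feed a hypothetical conjugacy into Popa/Furman superrigidity (Theorem \ref{popa_csr}, via the relative formulation) applied to the w-normal subgroup $\Lambda$: the conjugacy gives a cocycle which, by $\{\T\}$-cocycle superrigidity of $\Lambda/\Delta$, must be cohomologous to something coming from $\Lambda/\Delta$; unwinding this rigidity statement, together with the relative $1$-cohomology computations of \S 4, forces the datum $A$ to be recoverable from $\sigma_A$ up to isomorphism, so that $\sigma_A\conj\sigma_B$ implies the two encoded groups are isomorphic.

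The main obstacle I expect is step (3): arranging the encoding of $A$ so that superrigidity genuinely \emph{recovers} $A$ and not merely some lossy quotient of it. One has to be careful that (a) the relative $1$-cohomology group attached to $\sigma_A$ is canonically (functorially) identified with something from which $A$ can be reconstructed — this is exactly what \S 3 and \S 4 are set up to deliver, so the real work is invoking those computations with the right parameters; (b) the $\Delta$-suitability hypothesis (finite $\Delta$-orbit inside infinite $\Lambda$-orbits) is used precisely to make the relative cohomology nontrivial and group-valued in the way needed, so I would need to track where finiteness of the $\Delta$-orbit enters; and (c) weak mixing must survive the construction, since superrigidity is applied relative to a quotient and we need $\sigma_A\restrict\Lambda$ weakly mixing and malleable relative to that quotient — verifying malleability of the coinduced/Gaussian action relative to the chosen quotient is the technical crux. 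Torsion-freeness of $A$ should be what guarantees the relevant characters separate points / the cocycle is faithful enough; I would keep an eye on whether it is also needed to avoid collapsing non-isomorphic $A$'s under conjugacy.
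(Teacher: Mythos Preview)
Your overall shape is right --- build a Borel family of actions indexed by $A\in{\bf TFA}$ and use the relative $1$-cohomology group $H^1_{:\Delta}$ from \S 3--4 as the distinguishing invariant --- but two things are off.

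First, the construction. The paper does not ``encode $A$ as a cocycle $\alpha_A$'' or use a coinduced/Gaussian action. Instead, for $A\in{\bf TFA}$ one passes to the Pontryagin dual $\hat A$ (a compact abelian group), forms the generalized Bernoulli shift $\sigma:\Gamma\actson\hat A^{\N}$ induced by the given $\sigma_0:\Gamma\actson\N$, and then takes the \emph{quotient} $\sigma_{\hat A}$ of this shift by the rotation action of $\hat A$ (as in \S 4.3). To guarantee a.e.\ freeness and ergodicity of the restriction to $\Delta$, one multiplies by the ordinary Bernoulli shift $\beta:\Gamma\actson\{0,1\}^\Gamma$ and sets $h(A)\conj\sigma_{\hat A}\times\beta$. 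Borelness of $A\mapsto h(A)$ is Lemma~\ref{l.Borel}; weak mixing and freeness are immediate from the Bernoulli factor. No coinduction is needed here (it appears only later, for $\oeq$ and $\vneq$).

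Second, the logic of the hard direction. You describe step (3) as ``the conjugacy gives a cocycle, which by superrigidity must be cohomologous to\ldots''. That is not how the argument runs, and for conjugacy there is no cocycle arising from the equivalence to untwist. The argument is purely invariant-theoretic: $H^1_{:\Delta}$ is a conjugacy invariant (trivially), and the content of \S 4 --- culminating in Theorem~\ref{t.cohoquotient} and Corollary~\ref{t.pseudofamily} --- is the \emph{computation} $H^1_{:\Delta}(\sigma_{\hat A}\times\beta)\simeq A\times\Char(\Lambda/\Delta)$ as topological groups. Superrigidity (Theorem~\ref{popa_csr}) enters only inside that computation, applied to the malleable quotient action $\overline{(\sigma\times\rho)}_\Delta$ of $\Lambda/\Delta$, not to any map coming from a conjugacy. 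Once the invariant is computed, a conjugacy $h(A_0)\conj h(A_1)$ gives an isomorphism $A_0\times\Char(\Lambda/\Delta)\simeq A_1\times\Char(\Lambda/\Delta)$ of topological abelian groups; since $\Char(\Lambda/\Delta)$ is compact and $A_0,A_1$ are discrete torsion-free, one recovers $A_0\simeq A_1$. This is also where torsion-freeness is actually used --- to separate the discrete factor from the compact one --- not, as you guessed, for faithfulness of characters in the construction.
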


\begin{theorem}\label{t.mainthm2v2}
Under the hypothesis of Theorem \ref{t.mainthm2} we have 
$$\simeq^{\bf TFA}\leq_B^{\N}\oeq^{\actfwm(\Gamma,X,\mu)}
$$ 
and
$$
\simeq^{\bf TFA}\leq_B^{\N}\vneq^{\actfwm(\Gamma,X,\mu)}.
$$
\end{theorem}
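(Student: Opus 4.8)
The plan is to take $f\colon A\mapsto\sigma_A$ to be exactly the Borel reduction that will be constructed in the proof of Theorem~\ref{t.mainthm1v2}, so that $A\simeq^{\bf TFA}B\iff\sigma_A\conj\sigma_B$. Since a conjugacy of actions is in particular an orbit equivalence and also induces an isomorphism of the associated group-measure-space algebras, we have $\conj\subseteq\oeq$ and $\conj\subseteq\vneq$ on $\actfwm(\Gamma,X,\mu)$; hence $f$ is automatically a Borel homomorphism from $\simeq^{\bf TFA}$ into each of $\oeq^{\actfwm(\Gamma,X,\mu)}$ and $\vneq^{\actfwm(\Gamma,X,\mu)}$. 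Because $[A]_{\simeq^{\bf TFA}}\mapsto[\sigma_A]_{\conj}$ is injective on classes, the only thing left to verify is that $f$ is countable-to-one on classes, i.e., that every $\oeq$-class (respectively every $\vneq$-class) contains at most countably many $\conj$-classes from the family $\{\sigma_A:A\in{\bf TFA}\}$.

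The mechanism that is to make this work is that the recovery of $A$ from $\sigma_A$ carried out in \S4 is mediated by a \emph{rigid core} of $\sigma_A$: using that $\sigma_0\restrict\Lambda$ is $\Delta$-suitable — which singles out a $\Lambda$-orbit in $\N$ on which $\Delta$ has a finite orbit — the restriction $\sigma_A\restrict\Lambda$ is weakly mixing and malleable relative to a canonical quotient $(p_A,\rho_A)$ with $\rho_A$ factoring through an action of $\Lambda/\Delta$, and the relative $1$-cohomology group $\overline H^1(\sigma_A\restrict\Lambda,\rho_A)$ computed in \S4 is isomorphic to $A$ up to a fixed summand. By the hypothesis on $\Lambda/\Delta$ the $\Lambda/\Delta$-action $\rho_A$ is cocycle superrigid in the relevant sense, and — this is the point I would exploit — since cocycles of a free action correspond to cocycles of its orbit equivalence relation, $\overline H^1(\sigma_A\restrict\Lambda,\rho_A)$ should be recoverable from the orbit equivalence relation $E_{\sigma_A}$ together with the rigid core viewed as a substructure of $E_{\sigma_A}$.

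For orbit equivalence the key claim is therefore that $\overline H^1(\sigma_A\restrict\Lambda,\rho_A)$, and hence $A$, can be recovered from $E_{\sigma_A}$ alone \emph{up to a countable ambiguity}. Granting this, if $\sigma_A\oeq\sigma_B$, say via $T\in\Aut(X,\mu)$ with $TE_{\sigma_A}T^{-1}=E_{\sigma_B}$ a.e., then $A$ is one of countably many isomorphism types determined by $E_{\sigma_B}$, which is exactly the required bound. To prove the claim I would identify, inside $E_{\sigma_A}$, the rigid core as essentially the unique weakly mixing, malleable $\Lambda/\Delta$-substructure with the relevant features, up to countably many choices — a uniqueness statement to be proved by Popa's intertwining-by-bimodules inside $L^\infty(X)\rtimes_{\sigma_A}\Gamma$, using cocycle superrigidity of the $\Lambda/\Delta$-actions together with the explicit description of the family from \S4 (the countable ambiguity coming, for instance, from the — countably many — finite-index subgroups and endomorphisms of $\Lambda/\Delta$). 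The relative $1$-cohomology of this canonically-recovered substructure then equals $A$ up to a fixed summand.

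For von Neumann equivalence I would run the same argument after one preliminary reduction. An isomorphism $L^\infty(X)\rtimes_{\sigma_A}\Gamma\cong L^\infty(X)\rtimes_{\sigma_B}\Gamma$ need not carry the Cartan subalgebra $L^\infty(X)$ to $L^\infty(X)$; but one shows that the group-measure-space Cartan subalgebras of a member of the family fall into only countably many unitary-conjugacy classes — again by Popa-type intertwining exploiting the rigidity built into the \S4 construction — so the isomorphism can be adjusted, at the cost of a further countable ambiguity, so as to implement a stable orbit equivalence of $\sigma_A$ and $\sigma_B$; the rigid core is then recovered exactly as in the orbit-equivalence case, stability being harmless since only a countable-to-one bound is wanted. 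The hard part, in both cases, will be precisely these rigidity statements — that the weakly mixing malleable $\Lambda/\Delta$-substructure of $\sigma_B$ is unique up to a countable ambiguity, and (for von Neumann equivalence) that the group-measure-space Cartan of $L^\infty(X)\rtimes_{\sigma_B}\Gamma$ is unique up to a countable ambiguity — everything else being bookkeeping; these are to be established with Popa's deformation/rigidity techniques together with the concrete form of the family from \S4. (Theorem~\ref{t.ctblto1} plays no role in this reduction itself; it will enter only afterwards, in order to deduce from these reductions that $\oeq$ and $\vneq$ are not Borel.)
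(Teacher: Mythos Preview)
Your proposal has a genuine gap: the ``hard part'' you identify --- uniqueness of the rigid $\Lambda/\Delta$-substructure up to countable ambiguity, and uniqueness of the group-measure-space Cartan up to countable ambiguity --- is left entirely unjustified, and these are not statements that follow from anything developed in the paper. Saying they ``are to be established with Popa's deformation/rigidity techniques'' is not a proof; in fact general Cartan-uniqueness results of this sort were not available for arbitrary $\Gamma$ satisfying the hypotheses of Theorem~\ref{t.mainthm2}. Moreover, the relative $1$-cohomology group $H^1_{:\Delta}$ is a conjugacy invariant, not an orbit-equivalence invariant, so the claim that it can be ``recovered from $E_{\sigma_A}$ together with the rigid core'' needs a real argument that you do not supply.

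The paper takes a different and much shorter route that explains why the hypothesis $\Lambda\simeq\F_n$ is present. It does \emph{not} reuse the reduction from Theorem~\ref{t.mainthm1v2}; instead it builds a modified family $A\mapsto\sigma_{\hat A}\times\hat\rho_n$, where $\hat\rho_n$ is the action of $\Gamma$ co-induced from the restriction to $\F_n\leq\SL_2(\Z)$ of the standard action $\SL_2(\Z)\actson\T^2$. The point of this extra factor is that each action in the family now has $\rho_n:\F_n\actson\T^2$ as a quotient of its $\F_n$-restriction, with the requisite freeness property. The countable-to-one bound then comes directly from Ioana's separability/rigidity theorem \cite[Theorems~1.3 and 4.7]{ioana11}, quoted here as Theorem~\ref{t.ioana}: among such actions, any single $\oeq$-class (respectively $\vneq$-class) contains at most countably many $\F_n$-conjugacy classes. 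Since Corollary~\ref{t.pseudofamily} still gives $A\simeq B\iff \sigma_{\hat A}\times\hat\rho_n\conj\sigma_{\hat B}\times\hat\rho_n$ (using that $\hat\rho_n\restrict\Delta$ is weakly mixing, which is checked via Lemma~\ref{l.wmcoind} after noting $\Delta$ is non-amenable), the map is the desired countable-to-one Borel reduction. Your outline never uses the hypothesis $\Lambda\simeq\F_n$, which is a sign that the intended mechanism is missing.
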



\subsection{The conjugacy problem and Problem \ref{pr.kechris}} Before moving on to the proofs, we briefly discuss the heuristics surrounding the relationship between Problem \ref{pr.kechris} and the somewhat vaguely stated conjugacy problem. We also refer to the introduction of \cite{foruwe11}, though the discussion there differs from ours in some respects. We discuss the situation only for conjugacy where we are aided by having a natural group action that induces the equivalence relation, but it applies with only a small modification to orbit equivalence and von Neumann equivalence as well using the fact that these equivalence relations reduce to orbit equivalence relations induced by the unitary group of $\ell^2$, as was proved in \cite{sato09}.

The conjugacy problem asks for a \emph{method} for deciding a \emph{yes-or-no} question, and in this sense it is analogous to the classical decision problems in recursion theory. The question is how good a decision procedure for the conjugacy problem (or the analogous problem for deciding orbit equivalence and von Neumann equivalence) can be, and this is why, as we explain below, Problem \ref{pr.kechris} becomes relevant to the conjugacy problem.

We first need to introduce some standard notation related to trees in descriptive set theory. This will also be useful later in \S 5.

Let $\N^{<\N}$ denote the set of all finite sequences in $\N$, i.e. functions $s:k\to\N$ for some $k\in\N_0$ (where make the identification $k=\{0,\ldots, k-1\}$. Note that we have included the empty sequence, denoted $\emptyset$.) We will write $\length(s)=k$ if $\dom(s)=k$, and for $j\in\N$ we denote by $s\restrict j$ the restriction of $s$ to $j\cap k$. For $s,t\in\N^{<\N}$, we write $t\subseteq s$ if $s$ extends $t$, in which case we also say that $t$ is an \emph{initial segment} of $s$, and we write $s^\frown t$ for the sequence obtained by appending $t$ at the end of $s$. For convenience, we identify $i\in\N$ with the sequence $\langle i\rangle$ of length 1, and so $s^\frown i$ is the sequence $s$ with $i$ appended at the end. A \emph{tree} on $\N$ is a set $T\subseteq\N^{<\N}$ which is closed under initial segments. The elements of $T$ are called \emph{nodes}; an element in $T$ which has no extension in $T$ is called a \emph{terminal node}. An \emph{infinite branch} through a tree $T$ is an element $x\in\N^{\N_0}$ such that $x\restrict k\in T$ for all $k\in\N_0$. A tree on $\N$ is \emph{well-founded} if it has no infinite branches; otherwise it is \emph{ill-founded}. We let ${\rm Tree}(\N)$ denote the set of all trees on $\N$. (\cite[Chapter 2]{kechris95} is a good elementary reference on the basics of trees in descriptive set theory.)

Fix a countable discrete group $\Gamma$ and a standard Borel probability space $(X,\mu)$. The conjugacy relation $\conj$ on $\act(\Gamma,X,\mu)$ is induced by the continuous action
$$
(T\cdot\sigma)(\gamma)=T\sigma(\gamma)T^{-1}.
$$
We will now device a decision procedure for the yes-or-no problem of checking conjugacy only using this fact. For this, fix a complete compatible metric $\delta$ on $\act(\Gamma,X,\mu)$ bounded by 1.

Fix $\sigma_0,\sigma_1\in\act(\Gamma,X,\mu)$. Let $S_n$ enumerate a dense set in $\Aut(X,\mu)$ and let $d$ be a complete compatible metric on $\Aut(X,\mu)$ which is bounded by 1. We define a sequence $T_n^{\sigma_0,\sigma_1}\subset\N^n$ for $n\in\N_0$ by recursion on $n$ as follows: $T_0^{\sigma_0,\sigma_1}=\{\emptyset\}$, $T_1^{\sigma_0,\sigma_1}=\N^1$, and if $T_n^{\sigma_0,\sigma_1}$ has been defined, then we define for $s\in\N^{n+1}$
$$
s\in T_{n+1}^{\sigma_0,\sigma_1}\iff s\restrict n\in T_n^{\sigma_0,\sigma_1}\wedge \delta(S_{s(n)}\cdot \sigma_0, \sigma_1)\leq \frac 1 {2^n}\wedge d(S_{s(n)},S_{s(n-1)})\leq \frac 1 {2^n}.
$$
It is then clear that
$$
T^{\sigma_0,\sigma_1}=\bigcup_{n=0}^\infty T_n^{\sigma_0,\sigma_1}
$$
is a tree, which we call the \emph{search tree}: The branches of $T^{\sigma_0,\sigma_1}$ index attempts at finding a Cauchy sequence in $\Aut(X,\mu)$ which converges to a transformation conjugating $\sigma_0$ and $\sigma_1$.

Observe then that the search tree $T^{\sigma_0,\sigma_1}$ has an infinite branch if and only if $\sigma_0$ and $\sigma_1$ are conjugate: Namely, if $x\in\N^\N$ is an infinite branch then $S_{x(n)}$ is a Cauchy sequence in $\Aut(X,\mu)$, and $S_{x(n)}\cdot \sigma_0\to \sigma_1$ as $n\to\infty$. Conversely, if $S\cdot\sigma_0=\sigma_1$ for some $S\in\Aut(X,\mu)$ then clearly there is some $x\in\N^{\N_0}$ such that $\delta(S_{x(n)}\cdot\sigma_0,\sigma_1)\leq \frac 1 {2^n}$ for all $n\in\N$, and $d(S_{x(n)},S)\leq\frac 1 {2^{n+1}}$ for all $n\in\N_0$, from which it easily follows that $x\in T^{\sigma_0,\sigma_1}$.

In this way we have reduced the conjugacy problem to determining whether or not there is an infinite branch through a tree on $\N$. It is of course well-known that membership in \emph{any} analytic set can be tested in this way: The set of all ill-founded trees on $\N$ is a \emph{complete analytic} set, in the sense that if $A$ is any analytic set in a Polish space $X$, then there is a Borel function $f:X\to{\rm Tree}(\N)$ such that $x\in A$ iff $f(x)$ is ill-founded. So the method above tells us nothing special about the conjugacy problem, and we can regard it as the \emph{worst possible} method for deciding conjugacy. The question therefore is if there is a \emph{better} method than the method described above.

If the conjugacy relation were a Borel relation, then the answer to that question would be `yes'. Namely, we could fix a description of the Borel set of pairs $(\sigma_0,\sigma_1)$ which are conjugate (that is, a description of how it is build up from basic open sets), such as for instance the description using ``Borel codes'' found in \S 5 of this paper. The description of how the Borel set is build from basic open set can be thought of as a recipe for how, in countably many steps, the problem of determining if the pair $(\sigma_0,\sigma_1)$ are conjugate to determining membership in countably many basic open sets (which we assume is an easy task.)

The important difference between the method given by a Borel description, and the general method of search trees, is that the Borel method requires only countable resources to be at our disposal, whereas the search tree method potentially requires access to uncountable resources, since the well-founded trees on $\N$ can define \emph{all} countable ordinals, see e.g. \cite[2.G]{kechris95}. If the conjugacy problem is analytic but not Borel, then no general method for determining conjugacy that use only countable resources exists, since any definition of the search tree must produce trees defining arbitrarily large countable ordinals (otherwise it \emph{would} be Borel.) And if it is \emph{complete analytic}, then the algorithm described above, which means checking if the search tree is ill-founded or not, is \emph{best possible}, in the sense that deciding if a tree on $\N$ has an infinite branch can be reduced to it.\footnote{The difference between analytic non-Borel sets and complete analytic sets is not as big as it may appear at a first glance: Under relatively modest and reasonable extra set-theoretic hypotheses it disappears entirely, see Remark \ref{r.sdf}.}


\section{The relative 1-cohomology group of a measure preserving action}

Let $\Lambda$ be a countable group, and let $\sigma:\Lambda\actson (X,\mu)$ be a probability measure preserving action. A 1-cocycle is a cocycle with target group $\T=\{z\in\C:|z|=1\}$, i.e., a measurable map $\alpha:\Lambda\times X\to\T$ such that for all $\gamma_0,\gamma_1\in\Lambda$ and almost all $x\in X$ we have
$$
\alpha(\gamma_0\gamma_1,x)=\alpha(\gamma_0,\gamma_1\cdot_\sigma x)\alpha(\gamma_1,x).
$$
The set of measurable 1-cocycles is denoted $Z^1(\sigma)$ and forms a subgroup under pointwise multiplication. We give $Z^1(\sigma)$ the topology it inherits from $L^1(\Lambda\times X)$, which makes it a Polish group. A 1-coboundary is a 1-cocycle of the form
$$
\alpha(\gamma,x)=f(\gamma\cdot_\sigma x)f(x)^*
$$
for some measurable $f:X\to\T$, and the set of 1-coboundaries is a (not necessarily closed) subgroup of $Z^1(\sigma)$, denoted $B^1(\sigma)$. The \emph{1-cohomology group} of $\sigma$ is defined as $H^1(\sigma)=Z^1(\sigma)/B^1(\sigma)$.

Suppose now $\Delta<\Lambda$ is a subgroup. Let $\sigma\restrict\Delta$ denote the restriction of $\sigma$ to $\Delta$. Then we have the restriction map
$$
\varrho:Z^1(\sigma)\to Z^1(\sigma\restrict\Delta):\alpha\mapsto\alpha\restrict \Delta\times X.
$$

\begin{definition}
Let $\Delta<\Lambda$ be countable groups and $\sigma:\Lambda\actson X$ as above. We define
$$
Z^{1}_{:\Delta}(\sigma)=\{\alpha\in Z^1(\sigma): \alpha\restrict\Delta\times X=1\}=\ker(\varrho)
$$
and $B^1_{:\Delta}(\sigma)=B^1(\sigma)\cap Z^1_{:\Delta}(\sigma)$. Elements of $Z^1_{:\Delta}(\sigma)$ will be called \emph{$\Delta$-trivial} 1-cocycles, and elements of $B^1_{:\Delta}(\sigma)$ will be called \emph{$\Delta$-trivial} 1-coboundaries. The $\Delta$-\emph{relative} 1-cohomology group of $\sigma$ is then defined as $H^1_{:\Delta}(\sigma)=Z^1_{:\Delta}(\sigma)/B^1_{:\Delta}(\sigma)$.
\end{definition}

From this definition little else is clear except that $H^1_{:\Delta}(\sigma)$ is a conjugacy invariant of the action $\sigma$. Further, we have:


\begin{lemma}\label{l.ergtrivial}
If $\Delta$ is a normal subgroup of $\Lambda$ and $\sigma\restrict\Delta$ is ergodic, then $B^1_{:\Delta}(\sigma)=\{1\}$ and 
$$
Z^1_{:\Delta}(\sigma)\simeq H^1_{:\Delta}(\sigma)\simeq\Char(\Lambda/\Delta).
$$
\end{lemma}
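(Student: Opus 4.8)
The plan is to prove the three claims in order: first $B^1_{:\Delta}(\sigma) = \{1\}$, then $Z^1_{:\Delta}(\sigma) \simeq \Char(\Lambda/\Delta)$, and then observe the isomorphism with $H^1_{:\Delta}(\sigma)$ follows formally. For the first claim, suppose $\alpha \in B^1_{:\Delta}(\sigma)$, so $\alpha(\gamma,x) = f(\gamma\cdot_\sigma x)f(x)^*$ for some measurable $f:X\to\T$, and $\alpha\restrict\Delta\times X = 1$. Then for every $\delta\in\Delta$ and a.e. $x$ we have $f(\delta\cdot_\sigma x) = f(x)$, i.e. $f$ is $(\sigma\restrict\Delta)$-invariant. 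Since $\sigma\restrict\Delta$ is ergodic, $f$ is a.e. equal to a constant $c\in\T$, and hence $\alpha(\gamma,x) = c\cdot c^* = 1$ a.e. This gives $B^1_{:\Delta}(\sigma) = \{1\}$, and consequently $H^1_{:\Delta}(\sigma) = Z^1_{:\Delta}(\sigma)/\{1\} \simeq Z^1_{:\Delta}(\sigma)$.

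Next I would identify $Z^1_{:\Delta}(\sigma)$ with $\Char(\Lambda/\Delta)$. Let $\alpha \in Z^1_{:\Delta}(\sigma)$. The idea is that a $\Delta$-trivial cocycle must in fact be a constant (not depending on $x$) and factor through $\Lambda/\Delta$. Fix $\gamma\in\Lambda$. For any $\delta\in\Delta$, the cocycle identity gives
$$
\alpha(\gamma\delta, x) = \alpha(\gamma, \delta\cdot_\sigma x)\alpha(\delta,x) = \alpha(\gamma,\delta\cdot_\sigma x),
$$
since $\alpha(\delta,x)=1$; on the other hand, using normality of $\Delta$ write $\gamma\delta = \delta'\gamma$ with $\delta' = \gamma\delta\gamma^{-1}\in\Delta$, so
$$
\alpha(\gamma\delta,x) = \alpha(\delta'\gamma,x) = \alpha(\delta',\gamma\cdot_\sigma x)\alpha(\gamma,x) = \alpha(\gamma,x).
$$
Combining, $\alpha(\gamma,\delta\cdot_\sigma x) = \alpha(\gamma,x)$ for all $\delta\in\Delta$ and a.e. $x$, so $x\mapsto\alpha(\gamma,x)$ is $(\sigma\restrict\Delta)$-invariant; by ergodicity of $\sigma\restrict\Delta$ it equals a constant $\chi(\gamma)\in\T$. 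Thus $\alpha(\gamma,x) = \chi(\gamma)$ a.e., independent of $x$. The cocycle identity then reads $\chi(\gamma_0\gamma_1) = \chi(\gamma_0)\chi(\gamma_1)$, so $\chi:\Lambda\to\T$ is a homomorphism; and since $\chi(\delta) = \alpha(\delta,x) = 1$ for $\delta\in\Delta$, it descends to a character of $\Lambda/\Delta$. Conversely, any $\chi\in\Char(\Lambda/\Delta)$, pulled back to $\Lambda$ and viewed as the $x$-independent cocycle $\alpha(\gamma,x)=\chi(\gamma)$, obviously lies in $Z^1_{:\Delta}(\sigma)$. This correspondence is a group isomorphism (both sides under pointwise multiplication).

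The main obstacle, such as it is, is making sure the "for a.e. $x$" quantifiers are handled correctly when invoking the cocycle identity, which holds for each fixed pair $(\gamma_0,\gamma_1)$ only off a null set: since $\Lambda$ and $\Delta$ are countable, one takes the union of these null sets over all the (countably many) pairs used, so all the identities above hold simultaneously off a single null set, and ergodicity can be applied to each of the countably many invariant functions $x\mapsto\alpha(\gamma,x)$. One should also note measurability of $\chi$ is automatic since it takes values in a countable-image setting trivially, and the resulting map $Z^1_{:\Delta}(\sigma)\to\Char(\Lambda/\Delta)$ is a bijective homomorphism, hence the claimed isomorphism. I do not expect any genuine difficulty beyond this bookkeeping.
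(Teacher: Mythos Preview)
Your proposal is correct and follows essentially the same argument as the paper: both use normality of $\Delta$ together with the cocycle identity to show that $x\mapsto\alpha(\gamma,x)$ is $\sigma\restrict\Delta$-invariant, then invoke ergodicity to conclude it is constant, yielding a character of $\Lambda/\Delta$; and both deduce $B^1_{:\Delta}(\sigma)=\{1\}$ from the fact that a $\Delta$-invariant $f$ must be constant. The only cosmetic difference is order of exposition---the paper treats the general $Z^1_{:\Delta}$ case first and then specializes to coboundaries, whereas you handle coboundaries directly before the general case---and you spell out the homomorphism and converse steps more explicitly than the paper does.
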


\begin{proof}
Let $\alpha\in Z^1_{:\Delta}(\sigma)$. For $\delta,\delta'\in\Delta$ and $\gamma\in\Lambda$ such that $\gamma\delta=\delta'\gamma$ we have
$$
\alpha(\gamma,\delta\cdot_\sigma x)=\alpha(\gamma\delta,x)\alpha(\delta,x)^{-1}=\alpha(\delta'\gamma,x)=\alpha(\delta',\gamma\cdot_\sigma x)\alpha(\gamma,x)=\alpha(\gamma,x)
$$
showing that for each $\gamma\in\Lambda$ the function $x\mapsto\alpha(\gamma,x)$ is $\Delta$-invariant, therefore is constant a.e. For a $\Delta$-trivial 1-coboundary this invariance amounts to
$$
f(\gamma\cdot x)=c_\gamma f(x) \ \text{ (a.e.)}
$$
for some $c_\gamma\in\C$. For $\delta\in\Delta$ we must have $c_\delta=1$, and so $f(\delta\cdot x)=f(x)$, whence by the ergodicity of $\sigma\restrict \Delta$ we must have $f=1$.
\end{proof}

The previous Lemma indicates that $H^1_{:\Delta}(\sigma)$ is only potentially interesting as an invariant when $\sigma\restrict\Delta$ is \emph{not} ergodic. As we will see, $H^1_{:\Delta}(\sigma)$ is an invariant of the pattern of \emph{non-ergodicity} of $\sigma\restrict\Delta$. It turns out that $H^1_{:\Delta}(\sigma)$ can be controlled in some constructions, making it a useful conjugacy invariant.

\subsection{The canonical action of $\Lambda$ on the ergodic components $\sigma\restrict\Delta$.} 
For the purpose of this section, we fix a standard Borel space $X$, a countable discrete group $\Lambda$ with a normal subgroup $\Delta\lhd\Lambda$, $\kappa:\Lambda\to\Lambda/\Delta$ the canonical epimorphism, and $\sigma:\Lambda\actson X$ a Borel action on $X$. Let $P(X)$ denote the Polish space of Borel probability measures on $X$. Following the notation of \cite{mike04}, we denote by $\mathcal I_{\sigma}(X)\subseteq P(X)$ the set of $\sigma$-invariant measures on $X$, and by $\mathcal E_\sigma(X)\subseteq\mathcal I_\sigma(X)$ the set of $\sigma$-invariant ergodic measures on $X$. These can be seen to form Borel subsets of $P(X)$.

The group $\Lambda$ acts on $P(X)$ by defining $(\gamma\cdot\mu)(A)=\mu(\gamma^{-1}\cdot_\sigma A)$ for all Borel $A\subseteq X$. When $\Delta\lhd\Lambda$ is a normal subgroup, then $\mathcal I_{\sigma\vrestrict\Delta}(X)$ and $\mathcal E_{\sigma\vrestrict\Delta}(X)$ are clearly invariant under the action of $\Lambda$ on $P(X)$.

Suppose now $\mu\in\mathcal I_\sigma(X)$ is a fixed $\sigma$-invariant measure. Let $\pi:(X,\mu)\to (Y,\nu)$ and $Y\to \mathcal E_{\sigma\vrestrict\Delta}:y\mapsto\mu_y$ be an ergodic decomposition of $\sigma\restrict\Delta$, in the sense that $\pi:X\to Y$ is a Borel map onto $Y$ which is $\sigma\restrict\Delta$ invariant, $y\mapsto\mu_y$ is Borel and assigns to each $y\in Y$ the unique $\sigma\restrict\Delta$-invariant and ergodic measure supported on $\pi^{-1}(y)$, and the disintegration identity
$$
\mu=\int \mu_y d\nu(y)
$$
is satisfied (see \cite[Theorem 3.3]{mike04}.) Recall that this decomposition is essentially unique: If $\hat\pi:X\to Z$, $Z\to\mathcal E_{\sigma\vrestrict\Delta}: z\mapsto\mu_z$ was another such decomposition, then there is a Borel bijection $\theta:Y\to Z$ such that $\hat\pi=\theta\circ\pi$ and $\mu_y=\mu_{\theta(y)}$ for almost all $y\in Y$.


\begin{lemma}\label{l.quotientaction}
With notation as above, there is a $\nu$-preserving Borel action $\sigma_\Delta:\Lambda\actson Y$ which is a factor of $\sigma$ with $\pi:X\to Y$ as factor map, i.e., 
$$
\pi(\gamma\cdot_\sigma x)=\sigma_{\Delta}(\gamma)(\pi(x)) \text{\ \ (a.e.)}
$$
The action $\sigma_{\Delta}$ is $\nu$-ergodic if and only if $\sigma$ is $\mu$-ergodic. Moreover, $\sigma_\Delta\restrict\Delta$ is the trivial action on $Y$, and so $\sigma_\Delta$ factors to an action $\bar\sigma_\Delta:\Lambda/\Delta\actson Y$ through $\kappa$.
\end{lemma}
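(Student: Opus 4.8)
The plan is to exploit the essential uniqueness of the ergodic decomposition of $\sigma\restrict\Delta$ to transport the $\Lambda$-action on $X$ down to $Y$. First I would fix $\gamma\in\Lambda$ and consider the map $\pi\circ\sigma(\gamma):X\to Y$. Since $\Delta\lhd\Lambda$, for $\delta\in\Delta$ we have $\gamma^{-1}\delta\gamma\in\Delta$, so $\pi\circ\sigma(\gamma)$ is constant on $\sigma\restrict\Delta$-orbits in the appropriate sense: pushing forward, $\sigma(\gamma)$ carries the ergodic component $\mu_y$ to a $\sigma\restrict\Delta$-invariant ergodic measure (using that $\gamma\cdot(-)$ conjugates the $\Delta$-action to itself via $\delta\mapsto\gamma^{-1}\delta\gamma$, hence preserves $\mathcal E_{\sigma\restrict\Delta}(X)$, as already noted in the paragraph before Lemma \ref{l.quotientaction}). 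Thus $\hat\pi:=\pi\circ\sigma(\gamma):X\to Y$ together with $y\mapsto\sigma(\gamma)_*\mu_y$ is again an ergodic decomposition of $\sigma\restrict\Delta$. By the essential uniqueness clause recalled just above Lemma \ref{l.quotientaction}, there is a Borel bijection $\theta_\gamma:Y\to Y$ (a.e.\ defined) with $\pi\circ\sigma(\gamma)=\theta_\gamma\circ\pi$ a.e.; I set $\sigma_\Delta(\gamma):=\theta_\gamma$.

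Next I would check that $\gamma\mapsto\sigma_\Delta(\gamma)$ is an action a.e. From $\pi\circ\sigma(\gamma_1\gamma_2)=\theta_{\gamma_1\gamma_2}\circ\pi$ and $\pi\circ\sigma(\gamma_1)\circ\sigma(\gamma_2)=\theta_{\gamma_1}\circ\pi\circ\sigma(\gamma_2)=\theta_{\gamma_1}\circ\theta_{\gamma_2}\circ\pi$, and since $\pi$ is onto $Y$ (mod null sets), the uniqueness of $\theta$ forces $\theta_{\gamma_1\gamma_2}=\theta_{\gamma_1}\circ\theta_{\gamma_2}$ a.e.; countability of $\Lambda$ lets me discard a single null set so that this holds simultaneously, giving a genuine Borel action on a conull invariant subset of $Y$ (and one extends arbitrarily to all of $Y$). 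That $\sigma_\Delta$ preserves $\nu=\pi_*\mu$ is immediate: $\sigma_\Delta(\gamma)_*\nu=\sigma_\Delta(\gamma)_*\pi_*\mu=\pi_*\sigma(\gamma)_*\mu=\pi_*\mu=\nu$ since $\mu\in\mathcal I_\sigma(X)$. The factor-map identity $\pi(\gamma\cdot_\sigma x)=\sigma_\Delta(\gamma)(\pi(x))$ a.e.\ is exactly the defining relation of $\theta_\gamma$. For the ergodicity equivalence: if $\sigma$ is $\mu$-ergodic then its factor $\sigma_\Delta$ is $\nu$-ergodic; conversely, if $A\subseteq X$ is $\sigma$-invariant with $0<\mu(A)<1$, then by ergodicity of each component $\mu_y$ under $\sigma\restrict\Delta$ the function $y\mapsto\mu_y(A)$ is a.e.\ $\{0,1\}$-valued, hence $A$ agrees mod null with $\pi^{-1}(B)$ for a Borel $B\subseteq Y$, and $\sigma$-invariance of $A$ plus the factor relation makes $B$ $\sigma_\Delta$-invariant with $0<\nu(B)<1$, contradicting ergodicity of $\sigma_\Delta$.

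Finally, for $\delta\in\Delta$ the map $\sigma_\Delta(\delta)=\theta_\delta$ satisfies $\pi\circ\sigma(\delta)=\theta_\delta\circ\pi$; but $\pi$ is $\sigma\restrict\Delta$-invariant by construction of the ergodic decomposition, so $\pi\circ\sigma(\delta)=\pi$, forcing $\theta_\delta=\id$ a.e. Hence $\sigma_\Delta\restrict\Delta$ is trivial, and $\sigma_\Delta$ descends through $\kappa:\Lambda\to\Lambda/\Delta$ to a $\nu$-preserving Borel action $\bar\sigma_\Delta:\Lambda/\Delta\actson Y$, as claimed.

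The main obstacle I anticipate is bookkeeping the ``almost everywhere'' quantifiers: each $\theta_\gamma$ is only defined and unique up to a $\gamma$-dependent null set, and I must assemble these into a single conull $\Lambda$-invariant set on which all the cocycle/action identities hold literally. The standard fix — take the intersection over the countably many $\gamma$ (and pairs $\gamma_1,\gamma_2$) of the relevant conull sets, then saturate under the action — is routine but needs to be stated carefully; none of the measure-theoretic inputs beyond the essential uniqueness of ergodic decomposition (Theorem 3.3 of \cite{mike04} and the remark following it) are needed.
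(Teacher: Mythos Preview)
Your proof is correct and follows essentially the same approach as the paper: both define $\sigma_\Delta(\gamma)$ by observing that $\hat\pi=\pi\circ\sigma(\gamma)$ together with $y\mapsto\gamma\cdot\mu_y$ gives another ergodic decomposition of $\sigma\restrict\Delta$, and then invoke essential uniqueness to obtain the factor map identity. The paper is terser (writing $\sigma_\Delta(\gamma)(y)=y'\iff\gamma\cdot\mu_y=\mu_{y'}$ and leaving the ergodicity equivalence and triviality on $\Delta$ as ``follow easily''), while you spell out these points and the a.e.\ bookkeeping more carefully.
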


\begin{proof}
Define $\sigma_\Delta:\Lambda\actson Y$ by
$$
\sigma_\Delta(\gamma)(y)=y'\iff \gamma\cdot\mu_y=\mu_{y'}.
$$
To see that this makes sense and defines an action (at least a.e.), note that for $\gamma\in\Lambda$ we have that $\gamma\cdot\mu_y$ is a $\Delta$-invariant ergodic measure on $\gamma\cdot\pi^{-1}(y)$. Thus $\hat\pi(x)=\pi(\gamma\cdot_\sigma x)$ and $y\mapsto\gamma\cdot\mu_y$ provides an ergodic decomposition of $\mu$, and by the uniqueness of the ergodic decomposition we have $\gamma\cdot\mu_{\pi(x)}=\mu_{\hat\pi(x)}$ for $\mu$-almost all $x\in X$. Thus $\sigma_\Delta$ is defined a.e. and satisfies $\sigma_\Delta(\gamma)(\pi(x))=\pi(\gamma\cdot_\sigma x)$. Finally, for any measurable $A\subseteq Y$ we have 
$$
\nu(\sigma_\Delta(\gamma)(A))=\mu(\pi^{-1}(\sigma_\Delta(\gamma)(A)))=\mu(\sigma(\gamma)(\pi^{-1}(A)))=\nu(A),
$$
so that $\sigma_\Delta$ is $\nu$-preserving. The remaining claims now follow easily.
\end{proof}

\begin{definition}\label{d.quotientaction}
The action defined in the previous Lemma will be called the \emph{canonical action of $\Lambda$ }(respectively $\Lambda/\Delta$) \emph{on the ergodic components of $\sigma\restrict\Delta$}, and it will always be denoted $\sigma_\Delta$ (respectively $\bar\sigma_\Delta$.) The space $Y$ will be denoted $X_\Delta$ and the factor map $\pi$ will in general be denoted by $p_{X_\Delta}:X\to X_\Delta$ later in this paper.
\end{definition}

\subsection{The groups $H^1_{:\Delta}(\sigma)$ and $H^1(\bar\sigma_\Delta)$.}

For ease of notation, let $\bar\gamma=\kappa(\gamma)$. The factor map $p_{X_\Delta}=\pi:X\to Y$ provides a natural homomorphism $\tilde\pi:Z^1(\bar\sigma_\Delta)\to Z^1_{:\Delta}(\sigma)$ by
$$
\tilde\pi(\alpha)(\gamma,x)=\alpha(\bar\gamma,\pi(x)).
$$
We call $\tilde\pi$ the \emph{canonical} homomorphism in this context. Note that $\tilde\pi$ is continuous. The next lemma explains the significance of $H^1_{:\Delta}$ in terms of the action $\bar\sigma_\Delta$.


\begin{lemma}\label{l.canisom}
The canonical homomorphism $\tilde\pi: Z^1(\bar\sigma_\Delta)\to Z^1_{:\Delta}(\sigma)$ is an isomorphism which satisfies $\tilde\pi(B^1(\bar\sigma_\Delta))=B^1_{:\Delta}(\sigma)$. Thus $\tilde\pi$ factors to an isomorphism $\hat\pi: H^1(\bar\sigma_\Delta)\to H^1_{:\Delta}(\sigma)$.
\end{lemma}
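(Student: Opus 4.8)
The plan is to establish Lemma~\ref{l.canisom} by verifying four things in turn: that $\tilde\pi$ actually lands in $Z^1_{:\Delta}(\sigma)$, that it is injective, that it is surjective onto $Z^1_{:\Delta}(\sigma)$, and that it carries $B^1(\bar\sigma_\Delta)$ exactly onto $B^1_{:\Delta}(\sigma)$; granting these, $\tilde\pi$ is a group isomorphism $Z^1(\bar\sigma_\Delta)\to Z^1_{:\Delta}(\sigma)$ taking the distinguished subgroups onto one another, so it descends to an isomorphism $\hat\pi:H^1(\bar\sigma_\Delta)\to H^1_{:\Delta}(\sigma)$. The only step that requires real work is surjectivity, and the crux — the main obstacle — is the following factorization fact: \emph{every $\mu$-measurable $\Delta$-invariant function $h:X\to\T$ agrees $\mu$-a.e.\ with $g\circ p_{X_\Delta}$ for some measurable $g:X_\Delta\to\T$}; equivalently, $p_{X_\Delta}$ generates, mod $\mu$-null sets, the $\sigma\restrict\Delta$-invariant $\sigma$-algebra. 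I would deduce this from the ergodic decomposition $\mu=\int\mu_y\,d\nu(y)$ of $\sigma\restrict\Delta$ by a Fubini argument: for $\nu$-a.e.\ $y$ the function $h$ is $\mu_y$-a.e.\ invariant under $\sigma\restrict\Delta$, hence $\mu_y$-a.e.\ constant by ergodicity of $\mu_y$, and then $g(y):=\int h\,d\mu_y$ is measurable (as $y\mapsto\mu_y$ is Borel) and satisfies $h=g\circ p_{X_\Delta}$ a.e.

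For the two easy steps: $\tilde\pi(\alpha)$ satisfies the $\sigma$-cocycle identity because $\alpha$ satisfies the $\bar\sigma_\Delta$-cocycle identity and $p_{X_\Delta}(\gamma\cdot_\sigma x)=\bar\sigma_\Delta(\bar\gamma)(p_{X_\Delta}(x))$ by Lemma~\ref{l.quotientaction}; it is $\Delta$-trivial because $\bar\delta$ is the identity of $\Lambda/\Delta$ for $\delta\in\Delta$, so $\tilde\pi(\alpha)(\delta,x)=\alpha(e,p_{X_\Delta}(x))=1$. That $\tilde\pi$ is a homomorphism is immediate from the pointwise-product structure of the cocycle groups. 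Injectivity: if $\tilde\pi(\alpha)=1$ $\mu$-a.e., then since $\nu=(p_{X_\Delta})_{*}\mu$ we get $\alpha(\bar\gamma,\cdot)=1$ $\nu$-a.e.\ for every $\gamma$, and since $\kappa$ is onto every element of $\Lambda/\Delta$ is such a $\bar\gamma$, so $\alpha=1$.

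For surjectivity: given $\beta\in Z^1_{:\Delta}(\sigma)$, the same computation as in the proof of Lemma~\ref{l.ergtrivial} (using normality of $\Delta$ in $\Lambda$ together with $\beta\restrict\Delta\times X=1$) shows that $x\mapsto\beta(\gamma,x)$ is $\Delta$-invariant for each fixed $\gamma$, so by the factorization fact there is a measurable $g_\gamma:X_\Delta\to\T$ with $\beta(\gamma,x)=g_\gamma(p_{X_\Delta}(x))$ a.e. If $\gamma'=\gamma\delta$ with $\delta\in\Delta$, then $\beta(\gamma',x)=\beta(\gamma,\delta\cdot_\sigma x)\beta(\delta,x)=\beta(\gamma,\delta\cdot_\sigma x)=\beta(\gamma,x)$ a.e.\ (the last equality being the $\Delta$-invariance just noted), so $g_{\gamma'}=g_\gamma$ $\nu$-a.e.; hence $\alpha(\bar\gamma,y):=g_\gamma(y)$ is well defined, and since $\Lambda$ is countable only countably many a.e.-identifications are involved, so $\alpha$ is a measurable function on $(\Lambda/\Delta)\times X_\Delta$. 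Reading the cocycle identity $\beta(\gamma_0\gamma_1,x)=\beta(\gamma_0,\gamma_1\cdot_\sigma x)\beta(\gamma_1,x)$ through $p_{X_\Delta}$ and the intertwining identity of Lemma~\ref{l.quotientaction} gives the $\bar\sigma_\Delta$-cocycle identity for $\alpha$ on a $\nu$-conull set, so $\alpha\in Z^1(\bar\sigma_\Delta)$ and $\tilde\pi(\alpha)=\beta$.

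Finally the coboundary claim. If $\alpha(\bar\gamma,y)=f(\bar\sigma_\Delta(\bar\gamma)(y))f(y)^{*}$ with $f:X_\Delta\to\T$ measurable, then $\tilde\pi(\alpha)(\gamma,x)=(f\circ p_{X_\Delta})(\gamma\cdot_\sigma x)\,(f\circ p_{X_\Delta})(x)^{*}\in B^1(\sigma)$, and being in the range of $\tilde\pi$ it lies in $Z^1_{:\Delta}(\sigma)$, so $\tilde\pi(\alpha)\in B^1_{:\Delta}(\sigma)$. Conversely, let $\beta\in B^1_{:\Delta}(\sigma)$, say $\beta(\gamma,x)=f(\gamma\cdot_\sigma x)f(x)^{*}$ with $f:X\to\T$; the $\Delta$-triviality of $\beta$ forces $f(\delta\cdot_\sigma x)=f(x)$ a.e.\ for each $\delta\in\Delta$, so $f$ is $\Delta$-invariant, and by the factorization fact $f=g\circ p_{X_\Delta}$ for some measurable $g:X_\Delta\to\T$; then $\beta=\tilde\pi(\alpha')$ where $\alpha'(\bar\gamma,y)=g(\bar\sigma_\Delta(\bar\gamma)(y))g(y)^{*}\in B^1(\bar\sigma_\Delta)$, and by injectivity $\alpha'$ is the unique $\tilde\pi$-preimage of $\beta$. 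Thus $\tilde\pi(B^1(\bar\sigma_\Delta))=B^1_{:\Delta}(\sigma)$, and $\tilde\pi$ descends to the asserted isomorphism $\hat\pi:H^1(\bar\sigma_\Delta)\to H^1_{:\Delta}(\sigma)$.
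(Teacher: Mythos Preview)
Your proof is correct and follows essentially the same approach as the paper's: both establish injectivity trivially, prove surjectivity by showing that $x\mapsto\beta(\gamma,x)$ is $\Delta$-invariant (via the same normality computation) and hence factors through $p_{X_\Delta}$, and handle the coboundary equality by observing that a $\Delta$-trivial coboundary forces the implementing function $f$ to be $\Delta$-invariant and hence to descend. Your version is more explicit about the ``factorization fact'' (that $\Delta$-invariant measurable functions descend to $X_\Delta$ via the ergodic decomposition and a Fubini argument), which the paper invokes tacitly with the phrase ``constant on almost every ergodic component''; this extra care is a virtue rather than a divergence.
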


\begin{proof}
If $\alpha\in Z^1(\bar\sigma_\Delta)$ and $\tilde\pi(\alpha)=1$ then clearly $\alpha=1$, so $\tilde\pi$ is injective. On the other hand, if $\beta\in Z^1_{:\Delta}(\sigma)$ then for $\gamma\in\Lambda$ and $\delta\in\Delta$ we have
$$
\beta(\gamma,\delta\cdot x)=\beta(\gamma\delta,x)\beta(\delta,x)^*=\beta(\gamma\delta\gamma^{-1}\gamma,x)=\beta(\gamma\delta\gamma^{-1},\gamma\cdot_\sigma x)\beta(\gamma,x)=\beta(\gamma,x)
$$
so that for each $\gamma\in\Lambda$ the map $x\mapsto\beta(\gamma,x)$ is $\Delta$-invariant, and therefore constant on almost every ergodic component. Since clearly $\beta(\gamma\delta,x)=\beta(\gamma,x)$, we have that $\bar\beta(\bar\gamma,\pi(x))=\beta(\gamma,x)$ defines an element of $Z^1_{:\Delta}(\sigma)$ such that $\tilde\pi(\bar\beta)=\beta$.

The inclusion $\tilde\pi(B^1(\bar\sigma_\Delta))\subseteq B^1_{:\Delta}(\sigma)$ is clear. For the other inclusion, suppose $\tilde\pi(\alpha)\in B^1_{:\Delta}(\sigma)$ and let $f:X\to\T$ be such that
$$
\tilde\pi(\alpha)(\gamma,x)=\alpha(\bar\gamma,\pi(x))=f(\gamma\cdot_\sigma x)f(x)^*.
$$
Then $f(\delta\cdot_\sigma x)f(x)^*=1$ for $\delta\in\Delta$ and so $f$ is $\Delta$-invariant, and therefore invariant on almost every ergodic component. Thus $\alpha\in B^1(\bar\sigma_\Delta)$.
\end{proof}

We will refer to the map $\hat\pi:H^1(\bar\sigma_\Delta)\to H^1_{:\Delta}(\sigma)$ defined in the previous Lemma as the \emph{canonical} isomorphism. Since Lemma \ref{l.canisom} provides that description of $H^1_{:\Delta}(\sigma)$ in terms of the usual 1-cohomology group of the action $\bar\sigma_\Delta$ we obtain the following from \cite{schmidt80} and \cite{popa06}.
\begin{corollary}
(1) The group $B^1_{:\Delta}(\sigma)$ is closed if and only if the action $\bar\sigma_\Delta$ is strongly ergodic.

(2) If $\Lambda/\Delta$ has property (T) then $B^1_{:\Delta}(\sigma)$ is clopen and $H^1_{:\Delta}(\sigma)$ is countable and discrete.

\end{corollary}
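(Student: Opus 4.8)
The plan is to read the Corollary off Lemma~\ref{l.canisom} together with the standard characterizations of closedness (respectively openness) of the $\T$-valued coboundary group, due to Schmidt \cite{schmidt80} and Popa \cite{popa06}. The only thing needed beyond Lemma~\ref{l.canisom} is the observation that the canonical homomorphism $\tilde\pi\colon Z^1(\bar\sigma_\Delta)\to Z^1_{:\Delta}(\sigma)$ is a \emph{homeomorphism}, not merely an algebraic isomorphism. This is automatic: $Z^1(\bar\sigma_\Delta)$ is Polish, $Z^1_{:\Delta}(\sigma)=\ker(\varrho)$ is a closed subgroup of the Polish group $Z^1(\sigma)$ and hence itself Polish, and $\tilde\pi$ is a continuous (as noted just after its definition) bijective group homomorphism between these Polish groups; so by the open mapping theorem for Polish groups it is open, hence a topological group isomorphism. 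In particular $\tilde\pi$ restricts to a homeomorphism of $B^1(\bar\sigma_\Delta)$ onto $B^1_{:\Delta}(\sigma)$, so $B^1_{:\Delta}(\sigma)$ is closed (respectively open, respectively clopen) in $Z^1_{:\Delta}(\sigma)$ exactly when $B^1(\bar\sigma_\Delta)$ has the corresponding property in $Z^1(\bar\sigma_\Delta)$, and $\hat\pi$ becomes a topological isomorphism $H^1(\bar\sigma_\Delta)\cong H^1_{:\Delta}(\sigma)$.

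For part (1) I would invoke Schmidt's theorem \cite{schmidt80}: for a p.m.p. action $\rho$ into $\T$, the coboundary subgroup $B^1(\rho)$ is closed in $Z^1(\rho)$ if and only if $\rho$ is strongly ergodic. Taking $\rho=\bar\sigma_\Delta$ and transporting through the homeomorphism $\tilde\pi$ yields (1). (By Lemma~\ref{l.quotientaction}, $\bar\sigma_\Delta$ is ergodic precisely when $\sigma$ is, so Schmidt's hypothesis is met in the only situation where the right-hand side of (1) can hold.)

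For part (2), if $\Lambda/\Delta$ has property (T), then by Popa \cite{popa06} the coboundary group $B^1(\bar\sigma_\Delta)$ is open in $Z^1(\bar\sigma_\Delta)$; an open subgroup of a topological group is also closed, hence $B^1(\bar\sigma_\Delta)$ is clopen, and since $Z^1(\bar\sigma_\Delta)$ is separable there are only countably many cosets, so $H^1(\bar\sigma_\Delta)$ is countable and discrete. Pushing this through $\tilde\pi$ and $\hat\pi$ gives that $B^1_{:\Delta}(\sigma)$ is clopen in $Z^1_{:\Delta}(\sigma)$ and that $H^1_{:\Delta}(\sigma)$ is countable and discrete, which is (2).

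I do not expect a genuine obstacle here, since the substance is entirely contained in Lemma~\ref{l.canisom} and in the two cited theorems. The one step I would be careful to spell out — and the only place a reader might hesitate — is the upgrade from the algebraic isomorphism of Lemma~\ref{l.canisom} to a homeomorphism, because it is the \emph{topological} behaviour of $B^1_{:\Delta}(\sigma)$, not just its group structure, that is at issue; the open mapping theorem for Polish groups disposes of this cleanly. A minor secondary caveat, worth only a parenthetical remark, is the role of ergodicity in the version of Schmidt's theorem used in (1).
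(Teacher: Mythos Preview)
Your proposal is correct and follows exactly the paper's approach: the paper simply states that the corollary is obtained from Lemma~\ref{l.canisom} together with \cite{schmidt80} and \cite{popa06}, without further argument. Your only addition is to make explicit why $\tilde\pi$ is a homeomorphism (via the open mapping theorem for Polish groups), which is precisely the point one needs to transfer the topological conclusions; the paper leaves this implicit.
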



\section{Families of actions with $H^1_{:\Delta}$ non-trivial and calculable}

In this section, we will compute the $\Delta$-relative 1-cohomology group of certain p.m.p. actions of the form $\sigma\times\rho$ under reasonably general conditions. We make our computations in a somewhat more general setting than what is narrowly needed for our applications in \S 6, where it turns out that we always have that $\rho\restrict\Delta$ is weakly mixing, in which case it follows from Lemma \ref{l.canisom} that $H^1_{:\Delta}(\sigma\times\rho)=H^1_{:\Delta}(\sigma)$, and so only $H^1_{:\Delta}(\sigma)$ must be calculated. The extra effort this requires is mostly found in the proof of Lemma \ref{l.malquotient} below. We begin with the following elementary observation:

\begin{lemma}\label{l.wmix}
Let $\sigma:\Lambda\actson (X,\mu)$ and $\rho:\Lambda\actson (Y,\nu)$ be p.m.p. actions, and $\rho_0:\Lambda\actson (Y_0,\nu_0)$ a quotient of $\rho$ with quotient map $p_0:Y\to Y_0$. Let $\sigma\times\rho:\Lambda\actson (X\times Y,\mu\times\nu)$ be the diagonal product, $p_Y: X\times Y\to Y$ the projection onto $Y$, and let $p=p_0\circ p_Y$. Then $(\sigma\times\rho)\times_{p}(\sigma\times\rho)$ is isomorphic to
$$
(\sigma\times\sigma)\times(\rho\times_{p_0}\rho):\Lambda\actson((X\times X)\times (Y\times_{p_0} Y),\mu\times\mu\times(\nu\times_{p_0}\nu)).
$$
Thus, if $\sigma$ is weakly mixing and $\rho$ is weakly mixing relative to $p_0$, then the diagonal product $\sigma\times\rho$ is weakly mixing relative to $p$.
\end{lemma}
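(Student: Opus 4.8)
The plan is to write down the obvious ``coordinate-regrouping'' map, check that it is an isomorphism of $\Lambda$-systems, and then deduce the weak-mixing statement from the standard behaviour of weak mixing under products. Since $p=p_0\circ p_Y$ depends only on the $Y$-coordinate, a pair $((x_1,y_1),(x_2,y_2))$ lies in $(X\times Y)\times_p(X\times Y)$ exactly when $p_0(y_1)=p_0(y_2)$, i.e.\ when $(y_1,y_2)\in Y\times_{p_0}Y$. Thus
$$
\Phi\colon ((x_1,y_1),(x_2,y_2))\longmapsto((x_1,x_2),(y_1,y_2))
$$
is a Borel bijection of $(X\times Y)\times_p(X\times Y)$ onto $(X\times X)\times(Y\times_{p_0}Y)$. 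First I would check that $\Phi$ intertwines the two actions: under $(\sigma\times\rho)\times_p(\sigma\times\rho)$ the element $\gamma$ sends $((x_1,y_1),(x_2,y_2))$ to $((\gamma\cdot_\sigma x_1,\gamma\cdot_\rho y_1),(\gamma\cdot_\sigma x_2,\gamma\cdot_\rho y_2))$ (this is well defined because $p_0$ is $\Lambda$-equivariant), and its image under $\Phi$ is $((\gamma\cdot_\sigma x_1,\gamma\cdot_\sigma x_2),(\gamma\cdot_\rho y_1,\gamma\cdot_\rho y_2))$, which is precisely $\gamma$ applied under $(\sigma\times\sigma)\times(\rho\times_{p_0}\rho)$. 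So equivariance is immediate from unravelling the definitions.

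It remains to see that $\Phi$ transports $(\mu\times\nu)\times_p(\mu\times\nu)$ to $\mu\times\mu\times(\nu\times_{p_0}\nu)$. The key point is that the disintegration of the product measure $\mu\times\nu$ along the map $p=p_0\circ p_Y$, which is pulled back from the second factor, has conditional measures $(\mu\times\nu)_{y_0}=\mu\times\nu_{y_0}$, where $y_0\mapsto\nu_{y_0}$ is the disintegration of $\nu$ along $p_0$. Plugging this into the definition of the fibered-product measure gives
$$
(\mu\times\nu)\times_p(\mu\times\nu)=\int(\mu\times\nu_{y_0})\times(\mu\times\nu_{y_0})\,d\nu_0(y_0),
$$
and a routine Fubini rearrangement shows $\Phi$ pushes $(\mu\times\nu_{y_0})\times(\mu\times\nu_{y_0})$ forward to $\mu\times\mu\times(\nu_{y_0}\times\nu_{y_0})$; integrating against $\nu_0$ and using $\int\nu_{y_0}\times\nu_{y_0}\,d\nu_0(y_0)=\nu\times_{p_0}\nu$ yields the desired identity of measures. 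This establishes the stated isomorphism of $\Lambda$-systems.

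For the final sentence, I would invoke the classical fact that a p.m.p.\ action is weakly mixing if and only if its product with every ergodic p.m.p.\ action is ergodic; in particular, if $\sigma$ is weakly mixing then so is $\sigma\times\sigma$. Since $\rho$ is weakly mixing relative to $p_0$, the action $\rho\times_{p_0}\rho$ is ergodic, hence $(\sigma\times\sigma)\times(\rho\times_{p_0}\rho)$ is ergodic (weakly mixing times ergodic). By the isomorphism just proved, $(\sigma\times\rho)\times_p(\sigma\times\rho)$ is then ergodic, i.e.\ $\sigma\times\rho$ is weakly mixing relative to $p$.

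The only genuinely delicate point is the measure computation in the second step: one has to justify the identification $(\mu\times\nu)_{y_0}=\mu\times\nu_{y_0}$ of conditional measures — either by essential uniqueness of the disintegration or by a direct Fubini computation against a countable generating algebra of measurable rectangles — and keep the ``for $\nu_0$-almost every $y_0$'' qualifiers coherent through the rearrangement. Everything else is formal.
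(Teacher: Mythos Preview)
Your proof is correct and follows essentially the same approach as the paper: the paper also identifies the disintegration $(\mu\times\nu)_{y_0}=\mu\times\nu_{y_0}$, writes $(\mu\times\nu)\times_p(\mu\times\nu)=\int(\mu\times\nu_w)\times(\mu\times\nu_w)\,d\nu_0(w)$, and observes that the coordinate-regrouping map $((x,y),(x',y'))\mapsto((x,x'),(y,y'))$ is the desired isomorphism. Your write-up is simply more detailed, spelling out the equivariance check and the final weak-mixing deduction that the paper leaves implicit.
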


\begin{proof}
Let $\nu=\int\nu_{w} d\nu_0(w)$ be the disintegration of $\nu$ w.r.t. $p_0$. Then $\mu\times\nu=\int\mu\times\nu_wd\nu_0(w)$, and so
$$
(\mu\times\nu)\times_p(\mu\times\nu)=\int (\mu\times\nu_w)\times (\mu\times\nu_w)d\nu_0(w).
$$
It follows that the map $$
(X\times Y)\times_p (X\times Y)\to (X\times X)\times (Y\times_{p_0} Y): ((x,y),(x',y'))\mapsto ((x,x'),(y,y'))
$$ 
provides an isomorphism between $(\sigma\times\rho)\times_p(\sigma\times\rho)$ and $(\sigma\times\sigma)\times(\rho\times_{p_0}\rho)$.
\end{proof}

\subsection{The standard diagram} Consider now a countable discrete group $\Lambda$ with a normal subgroup $\Delta\lhd\Lambda$ and p.m.p. actions $\sigma:\Lambda\actson (X,\mu)$ and $\rho:\Lambda\actson (Y,\nu)$. Let $\sigma\times\rho:\Lambda\actson (X\times Y,\mu\times\nu)$ be the product action, and let $\bar\sigma_{\Delta}$, $\bar\rho_{\Delta}$ and $\overline{(\sigma\times\rho)}_{\Delta}$ be the corresponding actions of $\Lambda/\Delta$ on $X_\Delta$, $Y_\Delta$ and $(X\times Y)_\Delta$, respectively. It is in general \emph{not} the case that $\overline{(\sigma\times\rho)}_\Delta$ is the product of $\bar\sigma_\Delta$ and $\bar\rho_{\Delta}$, however the latter are quotients of $\overline{(\sigma\times\rho)}_\Delta$. To see this, let $p_{\sss Y}:X\times Y\to Y$ be the projection onto $Y$, and note that $p_{Y_\Delta}\circ p_Y:X\times Y\to Y_\Delta$ is $\Delta$-invariant. Thus $p^{\sss Y_\Delta}=p_{\sss Y_\Delta}\circ p_{\sss Y}$ is constant on almost all ergodic components of $\sigma\times\rho\restrict\Delta$, so it factors through $p_{\scriptscriptstyle (X\times Y)_{\Delta}}$ to $\bar p_{\sss Y_\Delta}: (X\times Y)_{\Delta}\to Y_\Delta$, and $\bar p_{\sss Y_\Delta}$ witnesses that $\bar\rho_{\Delta}$ is a quotient of $\overline{(\sigma\times\rho)}_\Delta$. Note that by definition the diagram 
\begin{equation*}\label{diagram}
\xymatrix{
X\times Y \ar[d]_{p^{\ }_{(X\times Y)_\Delta}} \ar[rr]^{p_Y^{\ }}\ar[drr]^(0.6){p^{Y_\Delta}}  & \ &  Y\ar[d]^{p^{\ }_{Y_\Delta}}\\
{(X\times Y)_{\Delta}} \ar[rr]_{\ \bar p^{\ }_{ Y_\Delta}} & \ & Y_\Delta
}
\end{equation*}
commutes. We will refer to this diagram as the \emph{standard diagram for $\sigma\times\rho$ relative to $\rho$ and $\Delta$}. The map $\bar p_{\scriptscriptstyle X_\Delta}:(X\times Y)_\Delta\to X_\Delta$ witnessing that $\bar\sigma_{\Delta}$ is a quotient of $\overline{(\sigma\times\rho)}_{\Delta}$ can be defined \emph{mutatis mutandis}, which together with $p_X$, $p_{X_\Delta}$ and $p_{(X\times Y)_\Delta}$ gives rise to the \emph{standard diagram for $\sigma\times\rho$ relative to $\sigma$ and $\Delta$}.

\begin{definition}
Suppose $\Delta\lhd\Lambda$ is a normal subgroup. We will say that $\sigma:\Lambda\actson X$ is weakly mixing (respectively malleable) relative to $\Delta$ if it is weakly mixing relative to the quotient $\sigma_\Delta$ (respectively malleable with respect to the quotient $\sigma_\Delta$.)
\end{definition}


\begin{lemma}\label{l.relweakmix}
With notation as in the standard diagram, if $\sigma$ is weakly mixing and $\rho$ is weakly mixing relative to $\Delta$ then $\overline{(\sigma\times\rho)}_\Delta$ is weakly mixing relative to $(\bar p_{\sss Y_\Delta},\bar\rho_{\Delta})$.
\end{lemma}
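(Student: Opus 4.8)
The plan is to realize $\overline{(\sigma\times\rho)}_\Delta\times_{\bar p_{\sss Y_\Delta}}\overline{(\sigma\times\rho)}_\Delta$ as a measure-theoretic factor of the diagonal action $(\sigma\times\rho)\times_{p^{\sss Y_\Delta}}(\sigma\times\rho)$, where $p^{\sss Y_\Delta}=p_{\sss Y_\Delta}\circ p_{\sss Y}$ as in the standard diagram; to prove the latter is ergodic using Lemma \ref{l.wmix}; and then to conclude, since a factor of an ergodic action is ergodic. Unwinding the definition of relative weak mixing, the goal is precisely the ergodicity of that factor.

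First I would apply Lemma \ref{l.wmix} with $\rho_0=\rho_\Delta$ the quotient of $\rho$ along $p_0=p_{\sss Y_\Delta}:Y\to Y_\Delta$ furnished by Lemma \ref{l.quotientaction}. By hypothesis $\sigma$ is weakly mixing and $\rho$ is weakly mixing relative to $\Delta$, i.e. relative to $(p_{\sss Y_\Delta},\rho_\Delta)$; so Lemma \ref{l.wmix} yields that $\sigma\times\rho$ is weakly mixing relative to $p^{\sss Y_\Delta}$. Spelling this out, the diagonal action $(\sigma\times\rho)\times_{p^{\sss Y_\Delta}}(\sigma\times\rho)$ on the space $\big((X\times Y)\times_{p^{\sss Y_\Delta}}(X\times Y),\,(\mu\times\nu)\times_{p^{\sss Y_\Delta}}(\mu\times\nu)\big)$ is ergodic.

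Next I would exhibit the factor map. Write $q=p_{\sss (X\times Y)_\Delta}:X\times Y\to (X\times Y)_\Delta$ and put $\Phi\big((x_1,y_1),(x_2,y_2)\big)=\big(q(x_1,y_1),q(x_2,y_2)\big)$. Commutativity of the standard diagram gives $p^{\sss Y_\Delta}=\bar p_{\sss Y_\Delta}\circ q$, so $\Phi$ carries $(X\times Y)\times_{p^{\sss Y_\Delta}}(X\times Y)$ into $(X\times Y)_\Delta\times_{\bar p_{\sss Y_\Delta}}(X\times Y)_\Delta$; and since $q$ is a factor map for $\sigma\times\rho\to(\sigma\times\rho)_\Delta$ and $\Delta$ acts trivially on $(X\times Y)_\Delta$, the map $\Phi$ intertwines the diagonal action of $\sigma\times\rho$ with that of $\overline{(\sigma\times\rho)}_\Delta$. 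The one substantive point is that $\Phi$ pushes $(\mu\times\nu)\times_{p^{\sss Y_\Delta}}(\mu\times\nu)$ forward to the canonical measure $\lambda_\Delta\times_{\bar p_{\sss Y_\Delta}}\lambda_\Delta$ defining the target, where $\lambda_\Delta=q_*(\mu\times\nu)$. This is a tower-of-disintegrations computation: disintegrating $\mu\times\nu=\int_{Y_\Delta}\lambda_w\,d\nu_{\sss Y_\Delta}(w)$ over $Y_\Delta$ via $p^{\sss Y_\Delta}$ and $\lambda_\Delta=\int_{Y_\Delta}(\lambda_\Delta)_w\,d\nu_{\sss Y_\Delta}(w)$ over $Y_\Delta$ via $\bar p_{\sss Y_\Delta}$, the essential uniqueness of disintegrations forces $q_*\lambda_w=(\lambda_\Delta)_w$; hence $\Phi_*\big(\int_{Y_\Delta}\lambda_w\times\lambda_w\,d\nu_{\sss Y_\Delta}(w)\big)=\int_{Y_\Delta}(\lambda_\Delta)_w\times(\lambda_\Delta)_w\,d\nu_{\sss Y_\Delta}(w)$, which is exactly $\lambda_\Delta\times_{\bar p_{\sss Y_\Delta}}\lambda_\Delta$.

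With $\Phi$ established as a factor map, ergodicity of $(\sigma\times\rho)\times_{p^{\sss Y_\Delta}}(\sigma\times\rho)$ descends to $\overline{(\sigma\times\rho)}_\Delta\times_{\bar p_{\sss Y_\Delta}}\overline{(\sigma\times\rho)}_\Delta$ (ergodicity being insensitive to whether one regards the action as a $\Lambda$- or $\Lambda/\Delta$-action, as $\Delta$ acts trivially), which is precisely the assertion that $\overline{(\sigma\times\rho)}_\Delta$ is weakly mixing relative to $(\bar p_{\sss Y_\Delta},\bar\rho_\Delta)$. I expect the only genuinely delicate step to be the measure identity above — in effect that passing to $\Delta$-ergodic components is compatible with fibering over $Y_\Delta$ — and this is forced entirely by the commutativity of the standard diagram together with the uniqueness statements for the ergodic decomposition and for disintegrations.
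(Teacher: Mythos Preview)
Your proposal is correct and follows essentially the same approach as the paper. The paper's proof is far terser: it simply notes the tower of extensions $X\times Y\to (X\times Y)_\Delta\to Y_\Delta$ and says that it therefore suffices to show $\sigma\times\rho$ is weakly mixing relative to $p^{Y_\Delta}$, which is Lemma~\ref{l.wmix}; the factor map you call $\Phi$ (and the measure compatibility you verify) is exactly what underlies this reduction, and indeed the paper writes out this very map---calling it $q$---in the paragraph immediately following the proof, as equation~\eqref{eq.q}.
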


\begin{proof}
Since we have the sequence of extensions
$$
X\times Y\overset{p_{(X\times Y)_\Delta}}\longrightarrow (X\times Y)_\Delta\overset{\bar p_{\sss Y_\Delta}}\longrightarrow Y_\Delta
$$
it is enough to show that $\sigma\times\rho$ is weakly mixing relative to $p^{\sss Y_\Delta}$. This follows directly from Lemma \ref{l.wmix}.
\end{proof}

Note that if $\rho\restrict\Delta\times p_{\sss Y_\Delta}^{-1}(\bar y)$ is weakly mixing for $\nu_\Delta$-almost all $\bar y\in Y_\Delta$ then we have that $\rho$ is weakly mixing relative $p_{\sss Y_\Delta}$. Of course, in this case we also have that $(\sigma\times\rho)_{\Delta}$ is isomorphic to $\sigma_\Delta\times\rho_\Delta$.
\medskip

\subsection{Initial computation of $H^1_{:\Delta}$.} Consider again $\sigma$ and $\rho$ as above and the standard diagram. For ease of notation here and in the next lemma, let $Z=X\times Y$ and $\eta=\mu\times\nu$. Using that the standard diagram commutes, it follows easily that the map
\begin{equation}\label{eq.q}
q: ((x,y),(x',y'))\mapsto (p_{\sss (X\times Y)_\Delta}(x,y),p_{(X\times Y)_\Delta}(x',y'))
\end{equation}
of $Z\times_{p^{\sss Y_\Delta}}Z$ onto $Z_\Delta\times_{\bar p_{\sss Y_\Delta}} Z_\Delta$ witnesses that $\overline{(\sigma\times\rho)}_\Delta\times_{\bar p_{Y_\Delta}}\overline{(\sigma\times\rho)}_\Delta$ is an extension of $(\sigma\times\rho)\times_{p^{Y_\Delta}} (\sigma\times\rho)$.


\begin{lemma}\label{l.malquotient}
Let $\Delta\lhd\Lambda$, $\sigma$ and $\rho$ be as above, and let $p_{X_\Delta}: X\to X_\Delta$ be the canonical quotient map. Suppose there is a continuous function $[0,1]\to \Aut(X\times X,\mu\times\mu):t\mapsto S_t$ such that
\begin{enumerate}
\item $t\mapsto S_t$ witnesses that $\sigma$ is malleable and
\item $S_t$ commutes with the action $\Delta\times\Delta\actson X\times X: (\delta,\delta')\cdot (x,x')=(\delta\cdot_\sigma x,\delta'\cdot_\sigma x)$.
\end{enumerate}
Then $\overline{(\sigma\times\rho)}_\Delta$ is malleable relative to $\bar p_{Y_\Delta}$.
\end{lemma}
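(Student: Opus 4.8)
The plan is to transfer the malleability path $t\mapsto S_t$ on $X\times X$ first to the fibered product $Z\times_{p^{\sss Y_\Delta}}Z$, and then to push it down along the extension map $q$ from \eqref{eq.q} to obtain a path in $\Aut(Z_\Delta\times_{\bar p_{\sss Y_\Delta}}Z_\Delta, \ldots)$ whose endpoint is the flip. Concretely, on $Z\times_{p^{\sss Y_\Delta}}Z=(X\times Y)\times_{p^{\sss Y_\Delta}}(X\times Y)$, an element has the form $((x,y),(x',y'))$ with $p_{\sss Y_\Delta}(p_{\sss Y}(y))=p_{\sss Y_\Delta}(p_{\sss Y}(y'))$; I would define $\tilde S_t((x,y),(x',y'))=((u,y),(u',y'))$ where $(u,u')=S_t(x,x')$, i.e., $\tilde S_t$ acts as $S_t$ on the two $X$-coordinates and does nothing to the $Y$-coordinates. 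First I would check that $\tilde S_t$ is well-defined and measure preserving on $Z\times_{p^{\sss Y_\Delta}}Z$: since the fibered product here only constrains the $Y$-coordinates (via $p^{\sss Y_\Delta}$), leaving those fixed keeps us inside the space, and the disintegration computation as in Lemma \ref{l.wmix} shows $(\mu\times\nu)\times_{p^{\sss Y_\Delta}}(\mu\times\nu)$ factors as $(\mu\times\mu)\times(\nu\times_{p_0}\nu)$ with $p_0=p_{\sss Y_\Delta}$, so $S_t\times\id$ preserves it. Continuity of $t\mapsto\tilde S_t$ follows from continuity of $t\mapsto S_t$, $\tilde S_0=\id$, and $\tilde S_1$ is the flip on the $X$-coordinates only — not yet the full flip, but see below.

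The crucial point is that $q$ intertwines $\tilde S_t$ with a well-defined path downstairs. For this I need that $\tilde S_t$ maps fibers of $q$ to fibers of $q$, equivalently that $\tilde S_t$ is constant (on each coordinate) modulo the ergodic-decomposition equivalence of $\sigma\times\rho\restrict\Delta$. This is exactly where hypothesis (2) enters: $S_t$ commuting with the diagonal-type $\Delta\times\Delta$ action on $X\times X$ means that $S_t$ sends $\Delta$-ergodic components to $\Delta$-ergodic components in each $X$-coordinate, hence descends. More precisely, an ergodic component of $(\sigma\times\rho)\restrict\Delta$ on $X\times Y$ is (a.e.) a product of an ergodic component of $\sigma\restrict\Delta$ on $X$ with the $Y$-point's data; since $S_t$ commutes with $\Delta$ acting on the first coordinate, it permutes the $X$-ergodic components measurably, and since it fixes $Y$, it descends to a transformation $\bar S_t$ of $(X\times Y)_\Delta$, and thus $\bar S_t\times\bar S_t$ (suitably fibered) on $Z_\Delta\times_{\bar p_{\sss Y_\Delta}}Z_\Delta$. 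One checks $q\circ\tilde S_t=(\bar S_t\times\bar S_t)\circ q$, that $t\mapsto\bar S_t$ is continuous (pushing forward measures is continuous, and $q$ is a factor map), $\bar S_0=\id$, and $\bar S_1$ is the flip on the $(X\times Y)_\Delta$-coordinates of $Z_\Delta\times_{\bar p_{\sss Y_\Delta}}Z_\Delta$ — which, since $Y_\Delta$ is recovered as a quotient and both coordinates have the same $\bar p_{\sss Y_\Delta}$-image, is precisely the flip of $\overline{(\sigma\times\rho)}_\Delta\times_{\bar p_{\sss Y_\Delta}}\overline{(\sigma\times\rho)}_\Delta$. This gives malleability of $\overline{(\sigma\times\rho)}_\Delta$ relative to $\bar p_{\sss Y_\Delta}$.

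The main obstacle I anticipate is the descent step: verifying rigorously that $\tilde S_t$ respects the ergodic-decomposition partition and hence genuinely descends to a Borel (indeed continuous-in-$t$) transformation of $(X\times Y)_\Delta$, rather than merely a measure-algebra statement. The clean way is to argue at the level of the canonical action: by Lemma \ref{l.quotientaction} and the uniqueness of ergodic decompositions, $S_t$ (commuting with $\Delta$ on $X$) induces a transformation of $X_\Delta$ compatible with $\sigma_\Delta$, and then the standard-diagram discussion preceding the lemma identifies $(X\times Y)_\Delta$ with a fibered product over $Y_\Delta$ of $X_\Delta$-data, on which $\bar S_t$ acts in the first coordinate. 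A secondary technical point is measure preservation of $\bar S_t$ for $\nu_\Delta$-a.e.\ fiber, which follows from the measure-preservation of $\tilde S_t$ together with the disintegration identity $\mu\times\nu=\int(\mu\times\nu)_{\bar z}\,d(\mu\times\nu)_\Delta(\bar z)$ and pushing forward by $q$. Once the descent is in hand, continuity and the endpoint computation are routine, so I would allocate most of the write-up to making the descent precise, likely citing Lemma \ref{l.quotientaction} and the uniqueness of the ergodic decomposition at the key junctures.
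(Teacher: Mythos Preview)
Your overall strategy matches the paper's: lift $S_t$ to $f_t$ on $Z\times_{p^{Y_\Delta}}Z$ by acting with $S_t$ on the two $X$-coordinates and leaving the $Y$-coordinates alone, then push down via $q$. The descent step via $\Delta\times\Delta$-invariance of $q\circ f_t$ is essentially what the paper does, and your use of hypothesis (2) there is correct.

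The genuine gap is the endpoint computation, which you call ``routine''. Since $S_1$ is the flip on $X\times X$, you have
\[
f_1\bigl((x,y),(x',y')\bigr)=\bigl((x',y),(x,y')\bigr),
\]
which swaps the $X$-coordinates but \emph{leaves $y$ and $y'$ in place}. Pushing down gives $\bigl(p_{Z_\Delta}(x',y),\,p_{Z_\Delta}(x,y')\bigr)$, and there is no a priori reason this equals $\bigl(p_{Z_\Delta}(x',y'),\,p_{Z_\Delta}(x,y)\bigr)=(\bar z',\bar z)$ for an arbitrary lift: the constraint $p_{Y_\Delta}(y)=p_{Y_\Delta}(y')$ does \emph{not} force $(x',y)$ and $(x',y')$ into the same ergodic component of $(\sigma\times\rho)\restrict\Delta$. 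Your supporting claims---that an ergodic component of $(\sigma\times\rho)\restrict\Delta$ is a product of an $X$-component with ``the $Y$-point's data'', and that $(X\times Y)_\Delta$ is identified with a fibered product of $X_\Delta$-data over $Y_\Delta$---are both false in general; the paper explicitly warns that $\overline{(\sigma\times\rho)}_\Delta$ need not equal $\bar\sigma_\Delta\times\bar\rho_\Delta$.

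What is actually needed (and what the paper supplies) is a measure-theoretic argument showing that for $(\eta_\Delta\times_{\bar p_{Y_\Delta}}\eta_\Delta)$-a.e.\ pair $(\bar z,\bar z')$ one can choose lifts $(x,y_0)$ and $(x',y_0)$ with a \emph{common} $Y$-coordinate $y_0$. This uses the disintegrations of $\nu$ and $\eta$ over $Y_\Delta$ and $Z_\Delta$ together with unique ergodicity on the fibres to show that $p_Y^*[\eta^\Delta_{\bar z}]=\nu^\Delta_{\bar p_{Y_\Delta}(\bar z)}$ for a.e.\ $\bar z$, whence for a.e.\ $\bar z,\bar z'$ over the same $\bar y$ the sets $\proj_Y(p_{Z_\Delta}^{-1}(\bar z))$ and $\proj_Y(p_{Z_\Delta}^{-1}(\bar z'))$ both have full $\nu^\Delta_{\bar y}$-measure and hence intersect. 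For such a common-$y_0$ lift, $f_1((x,y_0),(x',y_0))=((x',y_0),(x,y_0))$ and the descent immediately yields the flip. This step is the heart of the proof, not a technicality; your write-up should be reorganized to make the existence of a common-$y_0$ lift the main point.
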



\begin{proof}
For $w\in X\times X$ we will write $w_0$ and $w_1$ for the components of $w$, i.e., $w=(w_0,w_1)$. For $t\in [0,1]$, define $f_t:Z\times_{p^{\sss Y_\Delta}}Z\to Z\times_{p^{\sss Y_\Delta}}Z$ by
$$
f_t((x,y),(x',y'))=((S_t(x,x')_0,y),(S_t(x,x')_1,y'))
$$
and 
$$
\tilde S_t(q((x,y),(x',y')))=q(f_t((x,y),(x',y'))),
$$
where $q$ is as in \eqref{eq.q}. To see that $\tilde S_t$ is a well-defined map on $Z_\Delta\times_{\bar p_{\sss Y_\Delta}}Z_\Delta$ to itself, note that if $\delta,\delta'\in\Delta$ then by (2) we have
$$
f_t(\delta\cdot_{\sss \sigma\times\rho} (x,y),\delta'\cdot_{\sss \sigma\times\rho} (x',y'))=(\delta\cdot_{\sss \sigma\times\rho} (S_t(x,x')_0,y),\delta'\cdot_{\sss \sigma\times\rho} (S_t(x,x')_1,y')).
$$
Thus $q\circ f_t$ is invariant under the action of $\Delta\times\Delta$, and so it factors to a map on $Z_\Delta\times_{\bar p_{Y_\Delta}} Z_\Delta$ (namely $\tilde S_t$) and so $\tilde S_t$ is well-defined. Moreover, since $f_t\in\Aut(Z\times_{p^{Y_\Delta}} Z,\eta\times_{p^{Y_\Delta}}\eta)$, it follows that $\tilde S_t\in \Aut(Z_\Delta\times_{\bar p_{Y_\Delta}} Z_\Delta, \eta_\Delta\times_{\bar p_{Y_\Delta}}\eta_\Delta)$, and since $t\mapsto f_t$ is continuous, so is $t\mapsto \tilde S_t$.

It remains to show that $t\mapsto \tilde S_t$ witnesses that $\overline{(\sigma\times\rho)}_\Delta$ is malleable. For this, let $\nu=\int\nu_{\bar y}^\Delta d\nu_\Delta(\bar y)$ be the disintegration of $\nu$ w.r.t. $p_{Y_\Delta}$ and $\eta=\int \eta^{\Delta}_{\bar z} d\eta_\Delta(\bar z)$ be the disintegration of $\eta$ w.r.t. $p_{Z_\Delta}$. As this coincides with the ergodic decomposition of $\rho\restrict\Delta$, the measure $\nu_{\bar y}^\Delta$ is $\Delta$-invariant and uniquely $\rho\restrict\Delta$-ergodic on $p_{Y_\Delta}^{-1}(\bar y)$ for $\nu_\Delta$-almost all $\bar y\in Y_\Delta$. Similarly, $\eta_\Delta$-almost all $\eta^\Delta_{\bar z}$ are $\sigma\times\rho\restrict\Delta$-ergodic. Thus for $\eta_\Delta$-a.a. $\bar z\in Z_\Delta$ the push-forward measure $p_Y^*[\eta^\Delta_{\bar z}]$ is $\rho\restrict\Delta$-invariant and ergodic and supported on $p_{Y_\Delta}^{-1}(\bar p_{Y_\Delta}(\bar z))$, and so by unique ergodicity we have $p^*_Y[\eta^\Delta_{\bar z}]=\nu^\Delta_{\bar p_{Y_\Delta}(\bar z)}$ for $\eta_\Delta$-a.a. $\bar z\in Z_\Delta$. Let $\eta_\Delta=\int\eta_{\Delta,\bar y} d\nu_\Delta(\bar y)$ be the disintegration of $\eta_\Delta$ w.r.t. $\bar p_{Y_\Delta}$. It follows that for $\nu_\Delta$-a.a. $\bar y$ the set
$$
A_{\bar y}=\{\bar z\in Z_\Delta: \bar p_{Y_\Delta}(\bar z)=\bar y\wedge p^*_Y[\eta_{\bar z}^\Delta]=\nu^\Delta_{\bar y}\}
$$
has full $\eta_{\Delta,\bar y}$ measure. Then for $\bar y$ such that $\eta_{\Delta,\bar y}(A_{\bar y})=1$ and $\bar z,\bar z'\in A_{\bar y}$ we have 
$$
\nu_{\bar y}^\Delta(\proj_{Y}(p_{Z_\Delta}^{-1}(\bar z)))=\nu_{\bar y}^\Delta(\proj_{Y}(p_{Z_\Delta}^{-1}(\bar z')))=1
$$
In particular, there is $y_0\in \proj_{Y}(p_{Z_\Delta}^{-1}(\bar z))\cap \proj_{Y}(p_{Z_\Delta}^{-1}(\bar z'))$. Let $x,x'\in X$ be such that $p_{Z_\Delta}(x,y_0)=\bar z$ and $p_{Z_\Delta}(x',y_0)=\bar z'$. Then 
$$
\tilde S_1(\bar z,\bar z')=q\circ f_1((x,y_0),(x',y_0))=q((x',y_0),(x,y_0))=(\bar z',\bar z),
$$
and so $t\mapsto\tilde S_t$ witnesses that $(\sigma\times\rho)_\Delta$ is malleable relative to $\bar p_{Y_\Delta}$.
\end{proof}


\begin{lemma}\label{l.malleable}
Let $\Lambda$ be a countable discrete group, $\Delta\lhd\Lambda$ a normal subgroup, and $(X_0,\mu_0)$ a non-atomic standard Borel probability space. Let $\sigma_0:\Lambda\actson\N$ be an action, let $\sigma:\Lambda\actson X_0^\N$ be the generalized Bernoulli shift, and let $\rho:\Lambda\actson (Y,\nu)$ be any p.m.p. action. Then the action $\overline{(\sigma\times\rho)}_\Delta$ is malleable relative to $\bar p_{Y_\Delta}$.
\end{lemma}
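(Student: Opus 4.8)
The plan is to invoke Lemma~\ref{l.malquotient}, which reduces the problem to exhibiting a continuous path $t\mapsto S_t$ in $\Aut(X_0^\N\times X_0^\N,\mu_0^\N\times\mu_0^\N)$ that witnesses malleability of the generalized Bernoulli shift $\sigma$ and, crucially, commutes with the $\Delta\times\Delta$-action given by $(\delta,\delta')\cdot(x,x')=(\delta\cdot_\sigma x,\delta'\cdot_\sigma x')$. So the whole proof comes down to constructing this equivariant path, and everything else is citing the previous lemma.

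First I would recall the standard construction showing that Bernoulli shifts are malleable (this is essentially Popa's original observation): realize $(X_0,\mu_0)$ as $([0,1],\lambda)$, or more conveniently as a space carrying a continuous one-parameter rotation-type family; the cleanest choice is to take a non-atomic base space of the form $(X_1\times X_1,\mu_1\times\mu_1)$ for some standard probability space $(X_1,\mu_1)$, so that $X_0^\N\times X_0^\N\cong (X_1^\N\times X_1^\N)\times(X_1^\N\times X_1^\N)$, and on each pair of coordinates run a fixed continuous path $\theta_t\in\Aut(X_1^{\{n\}}\times X_1^{\{n\}})$ with $\theta_0=\id$ and $\theta_1=$ the coordinate flip. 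Taking the product $S_t=\prod_{n\in\N}(\theta_t)_n$ over all $n\in\N$ gives a continuous path in $\Aut(X_0^\N\times X_0^\N)$ with $S_0=\id$ and $S_1$ equal to the global flip $(x,x')\mapsto(x',x)$, which is exactly what ``witnesses malleability'' means. Since the construction is performed coordinatewise and uniformly in $n$, $S_t$ commutes with \emph{any} coordinate permutation applied diagonally to the two factors; in particular it commutes with the Bernoulli action of $\Lambda$ itself, and a fortiori with the action of $\Delta\times\Delta$ by $(\delta\cdot_\sigma,\delta'\cdot_\sigma)$, since that action permutes the $\N$-coordinates (via $\sigma_0$) independently in each factor while $S_t$ acts identically on every coordinate. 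This establishes condition (2) of Lemma~\ref{l.malquotient}, and condition (1) was the definition of the path.

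With both hypotheses of Lemma~\ref{l.malquotient} verified for $\sigma$ the generalized Bernoulli shift and $\rho$ arbitrary, the conclusion that $\overline{(\sigma\times\rho)}_\Delta$ is malleable relative to $\bar p_{Y_\Delta}$ is immediate. I would also remark that, because the flip $S_1$ commutes with the full diagonal Bernoulli action, the path $t\mapsto S_t$ in fact lies in the centralizer of $\Delta\times\Delta$ inside $\Aut$, which is the slightly stronger form of condition (2) that Lemma~\ref{l.malquotient} uses, so no delicate checking is needed.

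\textbf{Main obstacle.} There is no serious obstacle here: the content is entirely in the earlier Lemma~\ref{l.malquotient}, and the present lemma is the special case where the abstract hypothesis on the malleability path can be met concretely. The only point requiring a line of care is the verification that the coordinatewise path $S_t$ is genuinely continuous into $\Aut(X_0^\N\times X_0^\N,\mu_0^\N\times\mu_0^\N)$ with the weak topology (an infinite product of continuous paths, which converges because each $\theta_t$ moves only finitely many ``early'' coordinates appreciably in the weak topology — standard), and the bookkeeping identifying $X_0^\N\times X_0^\N$ with a product on which the flip decomposes coordinatewise. Neither is deep.
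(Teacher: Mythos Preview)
Your plan to invoke Lemma~\ref{l.malquotient} is exactly what the paper does; the difficulty is entirely in verifying condition~(2), and this is where your argument breaks down.

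You correctly observe that a coordinatewise path $S_t=\prod_{n\in\N}(\theta_t)_n$ commutes with any permutation of $\N$ applied \emph{diagonally} to the two copies of $X_0^\N$: such a permutation respects the pairing ``coordinate $n$ of the first factor with coordinate $n$ of the second,'' and $S_t$ acts within each such pair. But the step ``a fortiori $S_t$ commutes with the action of $\Delta\times\Delta$ by $(\delta\cdot_\sigma,\delta'\cdot_\sigma)$'' is false, not a fortiori true. When $\delta\neq\delta'$, the element $(\delta,\delta')$ moves coordinate $n$ of the first factor to $\sigma_0(\delta)(n)$ and coordinate $n$ of the second to $\sigma_0(\delta')(n)$, destroying precisely the pairing your $S_t$ uses. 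Concretely,
\[
\bigl(S_t((\delta,\delta')\cdot(x,x'))\bigr)_0(n)=\theta_t^{(0)}\bigl(x(\delta^{-1}\!\cdot n),\,x'(\delta'^{-1}\!\cdot n)\bigr),
\]
while
\[
\bigl((\delta,\delta')\cdot S_t(x,x')\bigr)_0(n)=\theta_t^{(0)}\bigl(x(\delta^{-1}\!\cdot n),\,x'(\delta^{-1}\!\cdot n)\bigr),
\]
and since $\theta_t^{(0)}$ depends on its second argument for $t>0$, these differ whenever $\sigma_0(\delta)\neq\sigma_0(\delta')$. So your path fails condition~(2), and the appeal to Lemma~\ref{l.malquotient} collapses.

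The paper's construction is specifically engineered around this obstruction. It first reduces (w.l.o.g.) to the case in which every $\sigma_0\restrict\Delta$-orbit is finite, realizes $X_0=[0,1)$, and then defines $S_t$ so that the swap at coordinate $n$ occurs if and only if $x(m,0)<t$ and $x(m,1)<t$ for \emph{every} $m$ in the $\Delta$-orbit of $n$. The swap criterion is thus constant along $\Delta$-orbits and symmetric in the two factors, so it is insensitive to replacing $x$ by $(\delta,\delta')\cdot x$; and the swap is executed on entire $\Delta$-orbit blocks at once rather than one coordinate at a time. It is this $\Delta$-block structure --- absent from the standard Popa path you propose --- that makes the construction compatible with the independent $\Delta\times\Delta$-action and allows Lemma~\ref{l.malquotient} to apply.
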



\begin{proof}
We may of course assume that $X_0=[0,1)$ and $\mu_0$ is the Lebesgue measure. Further, w.l.o.g., all $\sigma_0\restrict\Delta$ are finite. Let $X=X_0^\N=[0,1)^\N$, $\mu=\mu_0^\N$. For $x\in X_0^{\N\times\{0,1\}}$ and $i\in\{0,1\}$, write $x_i$ for the element in $X$ defined by $x_i(n)=x(n,i)$, for all $n\in\N$, so that $x\mapsto (x_0,x_1)$ canonically identifies $X^2$ and $X_0^{\N\times\{0,1\}}$. Define $[0,1)\to\Aut(X^{\N\times\{0,1\}},\mu^{\N\times\{0,1\}}):t\mapsto S_t$ by
$$
S_t(x)(n,i)=\left\{\begin{array}{ll}
x(n,i-1) & \text{ if for all } \delta\in\Delta,\ x(\sigma_0(\delta)(n),0),x(\sigma_0(\delta)(n),1)<t\\
x(n,i) & \text{ otherwise.}
\end{array}\right.
$$
Note that since $\Delta$ is a normal subgroup in $\Lambda$, $S_t$ commutes with the product action $\sigma\times\sigma$, and by definition $S_t$ commutes with the action of $\Delta\times\Delta$ on $X^{\N\times\{0,1\}}$. Thus Lemma \ref{l.malquotient} applies.
\end{proof}

We conclude:


\begin{corollary}\label{l.relcoho}
Let $\Lambda$, $\Delta$, $\sigma_0$ and $\sigma$ be as in the previos lemma, and suppose that $\sigma_0$ has infinite orbits so that $\sigma$ is weakly mixing. Suppose further that $\Lambda/\Delta$ has strongly $\{\T\}$-cocycle superrigid malleable weakly mixing actions. Then for any p.m.p. action $\rho:\Lambda\actson (Y,\nu)$ such that $\sigma\times\rho$ is weakly mixing relative to $p^{Y_\Delta}$ we have $H^1(\overline{(\sigma\times\rho)}_\Delta)\simeq H^1(\bar\rho_\Delta)$ and $H^1_{:\Delta}(\sigma\times\rho)\simeq H^1_{:\Delta}(\rho)$. In particular this holds for any $\rho$ which is weakly mixing relative to $p_{Y_\Delta}$.
\end{corollary}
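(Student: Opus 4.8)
The plan is to reduce both displayed isomorphisms to a single statement about the ordinary $1$-cohomology of the $\Lambda/\Delta$-action $\overline{(\sigma\times\rho)}_\Delta$, and then obtain that from Lemmas \ref{l.malleable} and \ref{l.relweakmix} together with the cocycle superrigidity of $\Lambda/\Delta$. First I would apply Lemma \ref{l.canisom} twice, to $\sigma\times\rho$ and to $\rho$, obtaining the canonical isomorphisms $H^1_{:\Delta}(\sigma\times\rho)\simeq H^1(\overline{(\sigma\times\rho)}_\Delta)$ and $H^1_{:\Delta}(\rho)\simeq H^1(\bar\rho_\Delta)$; this makes the two displayed statements equivalent, so it suffices to prove $H^1(\overline{(\sigma\times\rho)}_\Delta)\simeq H^1(\bar\rho_\Delta)$. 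The candidate isomorphism is the canonical homomorphism $\bar p_{Y_\Delta}^{*}\colon Z^1(\bar\rho_\Delta)\to Z^1(\overline{(\sigma\times\rho)}_\Delta)$, $\beta\mapsto\beta(\,\cdot\,,\bar p_{Y_\Delta}(\,\cdot\,))$, which is injective (since $\bar p_{Y_\Delta}$ is onto) and carries $B^1(\bar\rho_\Delta)$ into $B^1(\overline{(\sigma\times\rho)}_\Delta)$, hence descends to a homomorphism $\hat p\colon H^1(\bar\rho_\Delta)\to H^1(\overline{(\sigma\times\rho)}_\Delta)$; the task is to show $\hat p$ is bijective.

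Next I would check that $\overline{(\sigma\times\rho)}_\Delta$ is weakly mixing and malleable relative to its quotient $\bar\rho_\Delta$ (with quotient map $\bar p_{Y_\Delta}$). Malleability relative to $\bar p_{Y_\Delta}$ is exactly Lemma \ref{l.malleable}, using that $\sigma$ is the generalized Bernoulli shift attached to $\sigma_0$. For the relative weak mixing I would use that the map $q$ of \eqref{eq.q} exhibits $\overline{(\sigma\times\rho)}_\Delta\times_{\bar p_{Y_\Delta}}\overline{(\sigma\times\rho)}_\Delta$ as a factor of $(\sigma\times\rho)\times_{p^{Y_\Delta}}(\sigma\times\rho)$; the latter is ergodic by hypothesis, hence so is the former, which is the required relative weak mixing (this is the factorization argument used in the proof of Lemma \ref{l.relweakmix}). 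Since $\Lambda/\Delta$ has strongly $\{\T\}$-cocycle superrigid malleable weakly mixing actions, it follows that every $\alpha\in Z^1(\overline{(\sigma\times\rho)}_\Delta)$ is cohomologous to $\bar p_{Y_\Delta}^{*}\vartheta$ for some $\vartheta\in Z^1(\bar\rho_\Delta)$, which is exactly surjectivity of $\hat p$.

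For injectivity of $\hat p$, I would take $\beta\in Z^1(\bar\rho_\Delta)$ with $\bar p_{Y_\Delta}^{*}\beta$ a coboundary, say $\beta(\bar\gamma,\bar p_{Y_\Delta}(\bar z))=f(\bar\gamma\cdot\bar z)f(\bar z)^{*}$ a.e.\ for measurable $f\colon(X\times Y)_\Delta\to\T$. Then whenever $\bar p_{Y_\Delta}(\bar z)=\bar p_{Y_\Delta}(\bar z')$ the quantity $f(\bar z)f(\bar z')^{*}$ is unchanged under $\bar\gamma$, so $(\bar z,\bar z')\mapsto f(\bar z)f(\bar z')^{*}$ is invariant for $\overline{(\sigma\times\rho)}_\Delta\times_{\bar p_{Y_\Delta}}\overline{(\sigma\times\rho)}_\Delta$ and hence a.e.\ constant by the relative weak mixing just established. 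Disintegrating the measure on $(X\times Y)_\Delta$ over $\bar p_{Y_\Delta}$ and applying Fubini on a.e.\ fibre, this forces $f$ to agree a.e.\ with a function of $\bar p_{Y_\Delta}$ alone, i.e.\ $f=h\circ\bar p_{Y_\Delta}$ a.e.\ for some measurable $h\colon Y_\Delta\to\T$ (the measurability of $h$ following from the standard properties of disintegrations); then $\beta(\bar\gamma,\bar y)=h(\bar\gamma\cdot\bar y)h(\bar y)^{*}$ a.e., so $\beta\in B^1(\bar\rho_\Delta)$. This proves $\hat p$ is an isomorphism, and with the first paragraph both displayed isomorphisms follow. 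For the final clause I would note that if $\rho$ is weakly mixing relative to $p_{Y_\Delta}$ then, since $\sigma$ is weakly mixing ($\sigma_0$ having infinite orbits), Lemma \ref{l.wmix} gives that $\sigma\times\rho$ is weakly mixing relative to $p^{Y_\Delta}=p_{Y_\Delta}\circ p_Y$, which is the hypothesis used above.

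I expect the only step needing real care to be the injectivity argument: the substantial inputs are already packaged in Lemmas \ref{l.malquotient}--\ref{l.malleable} and in the cocycle superrigidity hypothesis, but one must be attentive in passing from ``$f(\bar z)f(\bar z')^{*}$ is invariant on the fibered product'' to ``$f$ factors measurably through $\bar p_{Y_\Delta}$'' --- this is precisely where relative weak mixing enters, and where a disintegration argument is needed to produce the quotient function $h$.
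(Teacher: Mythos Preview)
Your proof is correct and follows precisely the route the paper intends: the corollary is stated without proof in the paper because it is meant to follow immediately from Lemma~\ref{l.malleable} (malleability of $\overline{(\sigma\times\rho)}_\Delta$ relative to $\bar p_{Y_\Delta}$), the factorization argument of Lemma~\ref{l.relweakmix} (relative weak mixing), the strong $\{\T\}$-cocycle superrigidity hypothesis on $\Lambda/\Delta$ (surjectivity of $\hat p$), and Lemma~\ref{l.canisom} (translation between $H^1_{:\Delta}$ and $H^1$ of the quotient action). Your injectivity argument, using relative weak mixing to force the coboundary function to factor through $\bar p_{Y_\Delta}$, is the standard one and is exactly what the paper would have the reader supply.
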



\subsection{Ergodic decompositions of generalized Bernoulli actions.} Before proceeding with further calculations of $H^1_{:\Delta}$, we pause briefly to give a description of the ergodic decomposition of a generalized Bernoulli shift.

Let $X_0$ be a compact Polish space equipped with a standard Borel probability measure $\mu_0$. Let $\sigma_0:\Delta\actson\N$ be an action, and let $\sigma:\Delta\actson X_0^\N$ be the Bernoulli action, $\mu=\mu_0^\N$ the product measure on $X=X_0^\N$. Also, for $B\subseteq\N$ which is $\sigma_0$-invariant, let $\sigma_B:\Delta\actson X_0^B$ be the Bernoulli action, $\mu_B=\mu_0^B$, and let $p_B:X_0^\N\to X_0^B$ denote the projection map. Denote by $E_{\sigma}$ and $E_{\sigma_B}$ the induced orbit equivalence relations. Define in $X^\N$  an equivalence relation
\begin{equation}\label{eq.sim}
x\sim y\iff \text{ for all $\sigma_0$-invariant and finite $B\subseteq \N$ we have $p_B(x)E_{\sigma_B} p_B(y)$.}
\end{equation}
Then $\sim$ is a closed equivalence relation, and therefore the quotient $X/\!\!\sim$ is standard (recall that  $X_0$ is compact.) It is clear that the $\sim$-classes $\sigma$-invariant. Let $Y=X_0^\N/\!\!\sim$ and let $\pi:X\to Y: x\mapsto [x]_\sim$ be the quotient map, $\nu=\pi^*[\mu]$ the push-forward measure on $Y$.

For the purpose of the next Lemma, assume now that all $\sigma_0$-classes are finite. Evidently the action of $\sigma$ on $[x]_{\sim}$ is profinite, and so there is a unique measure $\mu_{[x]_\sim}$ on $[x]_\sim$ induced by giving $p_B([x]_\sim)$ the normalized counting measure, for each $B\subseteq\N$ finite and invariant. The measure $\mu_{[x]_\sim}$ is $\sigma$-invariant and it is ergodic since $p_B^*[\mu_{[x]_\sim}]$ is ergodic for $\sigma_B$.


\begin{lemma}\label{l.ergdec.v1}
With notation as in the preceding paragraph, the maps $\pi:X\to Y$ and $Y\to \mathcal{EI}_{\sigma}:y\mapsto\mu_y$ provides an ergodic decomposition of $\sigma:\Delta\actson X^\N$.
\end{lemma}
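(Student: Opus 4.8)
The plan is to verify the defining properties of an ergodic decomposition directly from the construction, using the characterization in \cite[Theorem 3.3]{mike04} and the essential uniqueness recalled in \S 3. There are three things to check: that $\pi:X\to Y$ is a Borel surjection and $\sigma\restrict\Delta$-invariant; that $y\mapsto\mu_y$ is Borel, assigns a $\sigma$-invariant and ergodic measure supported on $\pi^{-1}(y)$, and that this measure is the unique such; and that the disintegration identity $\mu=\int\mu_y\,d\nu(y)$ holds. Most of these are already established in the paragraph preceding the statement: $\sim$ is a closed equivalence relation with $\sigma$-invariant classes, so $Y=X/\!\!\sim$ is standard Borel, $\pi$ is the (Borel, surjective) quotient map, and $\pi$ is $\sigma$-invariant since the $\sim$-classes are. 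The measures $\mu_{[x]_\sim}$ were already noted to be $\sigma$-invariant, ergodic, and supported on $[x]_\sim=\pi^{-1}([x]_\sim)$.

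First I would address measurability of $y\mapsto\mu_y$: since $\mu_{[x]_\sim}$ is determined by requiring $p_B([x]_\sim)$ to carry normalized counting measure for every finite invariant $B\subseteq\N$, and there are only countably many such $B$, one can exhibit $y\mapsto\mu_y$ as a pointwise limit (in the weak topology on $P(X)$) of the explicitly Borel maps given by pushing forward the counting measures along the sections of $p_B$; alternatively one invokes the general measurable selection giving a Borel ergodic decomposition and identifies it with this one. Next I would prove the disintegration identity. The natural approach is to test against cylinder functions: for $f$ depending on finitely many coordinates, say coordinates lying in a finite set $F\subseteq\N$, enlarge $F$ to the finite $\sigma_0$-invariant set $B=\bigcup_{\delta\in\Delta}\sigma_0(\delta)(F)$ (finite since all $\sigma_0$-orbits are finite); then $\int f\,d\mu_{[x]_\sim}$ depends only on $p_B(x)$ and equals the average of $f$ over the finite orbit $E_{\sigma_B}$-class of $p_B(x)$, so integrating over $y$ against $\nu=\pi^*[\mu]$ reduces to Fubini on the product $\mu=\mu_B\times\mu_{\N\setminus B}$ together with the fact that $\mu_B$ on $X_0^B$ is the product of normalized counting measures over $E_{\sigma_B}$-orbits weighted by $\mu_B$ itself — i.e. $\int\!\left(\int f\,d\mu_y\right)d\nu(y)=\int f\,d\mu$. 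Since cylinder functions are dense, this gives the identity.

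Finally, uniqueness: I would argue that $\sim$ is exactly the ergodic-equivalence relation of $\sigma\restrict\Delta$, i.e. $x\sim y$ iff $x$ and $y$ lie in the same $\sigma\restrict\Delta$-ergodic component. One inclusion is immediate from $\sigma$-invariance of $\sim$-classes and the ergodicity of $\mu_{[x]_\sim}$; for the other, if $x\not\sim y$ then some $p_B(x)$ and $p_B(y)$ are in different $E_{\sigma_B}$-classes, and on the finite space $X_0^B$ with counting measure these classes carry mutually singular invariant ergodic measures, which pull back to show $\mu_{[x]_\sim}\perp\mu_{[y]_\sim}$, so they are genuinely different ergodic components. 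Combined with the general uniqueness of ergodic decompositions from \S 3, this pins down $(\pi, y\mapsto\mu_y)$ as \emph{the} ergodic decomposition.

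I expect the disintegration identity to be the only step requiring real care — specifically the bookkeeping of enlarging the finite coordinate set to a $\sigma_0$-invariant one and checking that the averaging defining $\mu_{[x]_\sim}$ is compatible with the product structure of $\mu$; the measurability and uniqueness steps are routine given the closedness of $\sim$ and the finiteness of the $\sigma_0\restrict\Delta$-orbits. One should also not forget to note explicitly that $\nu=\pi^*[\mu]$ is the pushforward, so $\nu$-a.e.\ $y$ is of the form $[x]_\sim$ with $\mu_{[x]_\sim}$ well-defined, and that the compactness of $X_0$ is what guarantees $X/\!\!\sim$ is standard.
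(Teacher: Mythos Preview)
Your approach is correct and essentially the same as the paper's: the heart of both arguments is verifying the disintegration identity $\mu=\int\mu_y\,d\nu(y)$ on cylinder sets (or cylinder functions) supported on a finite $\sigma_0$-invariant set $B$, using that $\mu_{[x]_\sim}$ restricted to the $B$-coordinates is the normalized counting measure on the $E_{\sigma_B}$-orbit of $p_B(x)$, together with the elementary disintegration of $\mu_B$ over $X_0^B/E_{\sigma_B}$. The paper is more economical---it simply declares that the disintegration identity is all that remains (the invariance of $\pi$, the ergodicity and invariance of $\mu_{[x]_\sim}$ having been noted in the preceding paragraph) and then carries out exactly your cylinder computation, making explicit the change of variables via the map $\hat p_B:[x]_\sim\mapsto [p_B(x)]_{E_{\sigma_B}}$ satisfying $\pi_B\circ p_B=\hat p_B\circ\pi$. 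Your additional discussion of measurability of $y\mapsto\mu_y$ and of uniqueness is reasonable scaffolding but not needed for the lemma as stated. One small slip: you refer to ``the finite space $X_0^B$'', but $X_0^B$ is generally not finite (only the $E_{\sigma_B}$-orbits are); this does not affect your mutual-singularity conclusion, since distinct finite orbits are disjoint.
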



\begin{proof}
It suffices to show that the disintegration identity
\begin{equation}\label{l.ergdec.eq}
\mu=\int\mu_y d\nu(y)
\end{equation}
holds. For this, let $B\subseteq\N$ be finite and $\sigma_0$-invariant. It is clear that $X_0^B/E_{\sigma_B}$ is standard, and that if we let $\pi_B: X_0^B\to X_0^B/E_{\sigma_B}$ be the quotient map and $\nu_B=\pi_B^*[\mu_B]$, then the disintegration identity
$$
\mu_B(A)=\int \frac{|[x]_{E_{\sigma_B}}\cap A|}{|[x]_{E_{\sigma_B}}|} d\nu_B([x]_{E_{\sigma_B}})
$$
holds. Further, note that by definition of $\sim$ the map $\hat p_B:[x]_\sim\mapsto [x]_{E_{\sigma_B}}$ from $X/\!\!\sim$ to $X_0^B/E_{\sigma_B}$ is well-defined and that $\pi_B\circ p_B= \hat p_B\circ \pi$. Thus $\nu_B=\hat p_B^*[\nu]$.

Now fix $C_i\subseteq X_0$ measurable for each $i\in B$, and consider the cylinder set
$$
C=\{x\in X_0^\N: (\forall i\in B) x(i)\in C_i\}.
$$
For such a cylinder set we have that
$$
\mu_{[x]_\sim}(C)=\frac{|[p_B(x)]_{E_{\sigma_B}}\cap \prod_{i\in B} C_i|}{|[p_B(x)]_{E_{\sigma_B}}|},
$$
and since the right hand side only depends $[p_B(x)]_{E_{\sigma_B}}=\hat p_B([x]_\sim)$ we have
$$
\int\mu_y(C)d\nu(y)=\int \frac{|[x]_{E_{\sigma_B}}\cap \prod_{i\in B} C_i|}{|[x]_{E_{\sigma_B}}|}d\nu_B([x]_{E_{\sigma_B}})=\mu_B(C)=\mu(C)
$$
as required.
\end{proof}

Dropping again the assumption that all $\sigma_0$-classes are finite, let $\N=C_0\cup C_1$ where $C_0,C_1$ are $\sigma_0$-invariant, and $C_0$ consists of all $n\in\N$ with $[n]_{\sigma_0}$ finite. Letting $\sigma_{C_i}:\Delta\actson X_0^{C_i}$, ($i=0,1$), be the corresponding Bernoulli shifts, we have $\sigma=\sigma_{C_0}\times\sigma_{C_1}$. Note that $p_{C_1}([x]_{\sim})=X_0^{C_1}$, while $p_{C_0}([x]_{\sim})$ is an ergodic component of $\sigma_{C_0}$ with ergodic invariant measure $\mu_{p_{C_0}([x]_\sim)}$ as described above. Since $[x]_{\sim}=p_{C_0}([x]_\sim)\times X_0^{C_1}$, we conclude:


\begin{lemma}\label{l.ergdec}
The maps
$$
\pi:X_0^\N\to Y: x\mapsto [x]_\sim,\ Y\to\mathcal{IE}_{\sigma}: y\mapsto \mu_{p_{C_0}([x]_\sim)}\times\mu_0^{C_1}
$$ 
provide an ergodic decomposition of $\sigma:\Delta\actson X_0^\N$.
\end{lemma}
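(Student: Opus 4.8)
\emph{Proof proposal.}

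The plan is to reduce the general case to the finite-orbit case of Lemma~\ref{l.ergdec.v1} by splitting off the weakly mixing ``infinite-orbit part'' $\sigma_{C_1}$. First I would record that the relation $\sim$ of \eqref{eq.sim} constrains only the coordinates in $C_0$: a finite $\sigma_0$-invariant set $B\subseteq\N$ is a union of finitely many \emph{finite} $\sigma_0$-orbits, so $B\subseteq C_0$, and hence $x\sim y$ holds exactly when $p_{C_0}(x)\sim_{C_0}p_{C_0}(y)$, where $\sim_{C_0}$, $\pi_{C_0}\colon X_0^{C_0}\to Y_{C_0}$, $\nu_{C_0}$ and the profinite component measures are the objects produced by the construction preceding Lemma~\ref{l.ergdec.v1} applied to $\sigma_0\restrict C_0$ in place of $\sigma_0$. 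Consequently the quotient map $\pi\colon x\mapsto[x]_\sim$ factors through $p_{C_0}$, identifying $Y=X_0^\N/\!\!\sim$ with $Y_{C_0}$ in such a way that $\pi=\pi_{C_0}\circ p_{C_0}$ and $\nu=\pi^*[\mu]=\nu_{C_0}$ (using $(p_{C_0})^*[\mu]=\mu_0^{C_0}$), while $\pi^{-1}([x]_\sim)=[x]_\sim=p_{C_0}([x]_\sim)\times X_0^{C_1}$, exactly as was already noted just before the statement of the lemma. In particular each candidate fiber measure $\mu_{p_{C_0}([x]_\sim)}\times\mu_0^{C_1}$ is concentrated on $\pi^{-1}([x]_\sim)$, and $y\mapsto\mu_{p_{C_0}([x]_\sim)}\times\mu_0^{C_1}$ is Borel since $y\mapsto\mu_{p_{C_0}([x]_\sim)}$ is Borel by Lemma~\ref{l.ergdec.v1}.

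The main step is to show that these fiber measures are $\sigma$-invariant and ergodic. Invariance is immediate, since $\sigma=\sigma_{C_0}\times\sigma_{C_1}$ and each tensor factor is invariant under its coordinate action. For ergodicity I would use that $\sigma_0\restrict C_1$ has all orbits infinite, so the generalized Bernoulli shift $\sigma_{C_1}\colon\Delta\actson(X_0^{C_1},\mu_0^{C_1})$ is weakly mixing (the standard fact also invoked in Corollary~\ref{l.relcoho}), whereas by Lemma~\ref{l.ergdec.v1} applied to $\sigma_{C_0}$ (all $\sigma_0\restrict C_0$-orbits being finite) the measure $\mu_{p_{C_0}([x]_\sim)}$ is $\sigma_{C_0}$-invariant and ergodic for $\nu_{C_0}$-almost every $y$. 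Now the diagonal product of a weakly mixing p.m.p.\ action with \emph{any} ergodic p.m.p.\ action of the same group is ergodic; applying this to the two factors shows that $\mu_{p_{C_0}([x]_\sim)}\times\mu_0^{C_1}$ is ergodic for $\sigma=\sigma_{C_0}\times\sigma_{C_1}$. This is the heart of the argument, and it is precisely here that the \emph{weak} mixing of $\sigma_{C_1}$ is needed rather than mere ergodicity, since $\mu_{p_{C_0}([x]_\sim)}$ together with $\sigma_{C_0}$ is profinite and hence far from weakly mixing.

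It remains to check the disintegration identity $\mu=\int\mu_y\,d\nu(y)$. Lemma~\ref{l.ergdec.v1} applied to $\sigma_{C_0}$ gives $\mu_0^{C_0}=\int_{Y_{C_0}}\mu_{p_{C_0}([x]_\sim)}\,d\nu_{C_0}(y)$; multiplying both sides by the fixed measure $\mu_0^{C_1}$ and using $\mu=\mu_0^{C_0}\times\mu_0^{C_1}$ together with the elementary identity $\bigl(\int\lambda_y\,d\kappa(y)\bigr)\times\theta=\int(\lambda_y\times\theta)\,d\kappa(y)$ (verified on measurable rectangles and then extended) yields $\mu=\int\bigl(\mu_{p_{C_0}([x]_\sim)}\times\mu_0^{C_1}\bigr)\,d\nu(y)$, as required. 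Together with the previous paragraph this exhibits $\pi$ and $y\mapsto\mu_{p_{C_0}([x]_\sim)}\times\mu_0^{C_1}$ as an ergodic decomposition of $\sigma\colon\Delta\actson X_0^\N$. The only ingredient genuinely beyond Lemma~\ref{l.ergdec.v1} is the ergodicity of the product fibers, so I would expect the ``weakly mixing $\times$ ergodic $=$ ergodic'' step, together with the routine bookkeeping identifying $Y$ with $Y_{C_0}$, to be where a little care is required, though no serious obstacle is anticipated.
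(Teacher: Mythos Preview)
Your proposal is correct and follows the same route as the paper: split $\N=C_0\cup C_1$, identify $\sim$ with the $\sim_{C_0}$ relation on the finite-orbit part, and apply Lemma~\ref{l.ergdec.v1} to $\sigma_{C_0}$. The paper in fact states Lemma~\ref{l.ergdec} as an immediate consequence of the paragraph preceding it and gives no formal proof, so you have supplied considerably more detail than the authors---in particular the explicit verification of the disintegration identity and the ``weakly mixing $\times$ ergodic $=$ ergodic'' step for the fiber measures, which the paper leaves entirely implicit.
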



\subsection{Quotients of generalized Bernoulli shifts.} Quotients of Bernoulli shifts were introduced in \cite{popa06} where they were used to obtain an infinite family of non-orbit equivalent actions of a countable group with the relative property (T) over a weakly normal subgroup. These actions were distinguished by their first 1-cohomology group, which was computed using a forerunner to Theorem \ref{popa_csr}. These ideas were then developed further in \cite{popavaes08}. We will now use a similar construction to obtain a large family of measure preserving actions for which the relative 1-cohomology group is calculable.

Let $A$ be a (non-trivial) second countable Abelian compact group, and let $\lambda_A$ be the normalized Haar measure on $A$. The group $A$ acts on $X=A^\N$ by rotation, i.e., by $(a\cdot x)(i)=ax(i)$, where $a\in A$, $x\in A^\N$ and $i\in\N$, and this action preserves the product measure $\mu=\lambda_A^\N$. Let $[x]_A$ denote the $A$-orbit of $x\in A^\N$, let $X_A=X/A=\{[x]_A:x\in X\}$, and let $p_A:X\to X_A:x\mapsto [x]_A$ be the quotient map. Note that since $A$ is compact the space $X_A$ is standard in the quotient Borel structure. We let $\nu_A=p_A^*[\mu]$. Let $\Lambda$ be a countable discrete group and let $\sigma_0:\Lambda\actson\N$ be an action. The induced Bernoulli action $\sigma:\Lambda\actson A^\N$ clearly commutes with the action of $A$ on $A^\N$, and so $\sigma$ induces an action of $\Lambda$ on $X_A$. We call this action the \emph{quotient} of $\sigma$ by $A$, and denote it by $\sigma_A$. Our goal now is to establish the following ``quotient version'' of Lemma \ref{l.relcoho}:


\begin{theorem}\label{t.cohoquotient}
Let $\Lambda$ be a countable discrete group with a normal subgroup $\Delta\lhd\Lambda$, and let $A$ be an infinite compact second countable Abelian group. Suppose $\Lambda/\Delta$ has strongly $\{\T\}$-cocycle superrigid weakly mixing malleable actions. Suppose further that there is a $\Delta$-suitable action $\sigma_0:\Lambda\actson\N$, and let $\sigma:\Lambda\actson A^\N$ be the corresponding generalized Bernoulli shift, $\sigma_A$ its quotient. Then for any p.m.p. action $\rho:\Lambda\actson Y$ such that $\sigma\times\rho$ is weakly mixing with respect to $p^{Y_\Delta}$ we have $H^1_{:\Delta}(\sigma_A\times\rho)\simeq H^1_{:\Delta}(\rho)\times \Char(A)$. In particular, $H^1_{:\Delta}(\sigma_A)\simeq\Char(\Lambda)\times \Char(A)$.
\end{theorem}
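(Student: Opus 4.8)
The plan is to compare the $\Delta$-trivial cocycles of $\sigma_A\times\rho$ with those of the ``unquotiented'' product $\sigma\times\rho$, whose relative $1$-cohomology is already identified in Corollary~\ref{l.relcoho}, and to show that passing from $\sigma\times\rho$ to its $A$-quotient introduces exactly one further copy of $\Char(A)$. Write $X=A^\N$, let $A$ act on $X$ by the diagonal rotation $(a\cdot x)(i)=ax(i)$ (which commutes with $\sigma$ and has quotient map $p_A\colon X\to X_A$), and set $q=p_A\times\id_Y\colon X\times Y\to X_A\times Y$; this is a measure preserving surjection intertwining $\sigma\times\rho$ with $\sigma_A\times\rho$. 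Precomposition with $q$ is therefore an injective homomorphism $q^*\colon Z^1_{:\Delta}(\sigma_A\times\rho)\to Z^1_{:\Delta}(\sigma\times\rho)$ carrying $B^1_{:\Delta}$ into $B^1_{:\Delta}$, so it descends to an injection, again denoted $q^*$, on relative $1$-cohomology.

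Since $\sigma_0$ is $\Delta$-suitable its orbits are infinite, so $\sigma$ is weakly mixing; and the hypothesis that $\sigma\times\rho$ is weakly mixing relative to $p^{Y_\Delta}$ forces $\sigma\times\rho$ (a factor of the fibered product, via the first coordinate) to be ergodic. With the standing assumption that $\Lambda/\Delta$ has strongly $\{\T\}$-cocycle superrigid weakly mixing malleable actions, Corollary~\ref{l.relcoho} now applies, so the pullback map $\iota\colon H^1_{:\Delta}(\rho)\to H^1_{:\Delta}(\sigma\times\rho)$ along the projection $X\times Y\to Y$ is an isomorphism. Writing $\iota_A\colon H^1_{:\Delta}(\rho)\to H^1_{:\Delta}(\sigma_A\times\rho)$ for the analogous pullback along $X_A\times Y\to Y$, one has $q^*\circ\iota_A=\iota$, and a short diagram chase reduces the theorem to the single claim $\ker(q^*)\simeq\Char(A)$ (with $q^*$ now the map $H^1_{:\Delta}(\sigma_A\times\rho)\to H^1_{:\Delta}(\sigma\times\rho)$): indeed $\ker(q^*)$ meets $\im(\iota_A)$ only in $0$ because $q^*\iota_A=\iota$ is injective, and $\ker(q^*)\cdot\im(\iota_A)$ is everything, since given $[\alpha]$ one picks $[\beta]$ with $q^*[\alpha]=\iota[\beta]=q^*\iota_A[\beta]$, whence $[\alpha]\cdot\iota_A[\beta]^{-1}\in\ker(q^*)$. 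This yields $H^1_{:\Delta}(\sigma_A\times\rho)\simeq\Char(A)\times H^1_{:\Delta}(\rho)$.

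To analyse the kernel, suppose $\alpha\in Z^1_{:\Delta}(\sigma_A\times\rho)$ with $q^*\alpha$ a $\Delta$-trivial coboundary, $q^*\alpha(\gamma,(x,y))=f(\gamma\cdot_\sigma x,\gamma\cdot_\rho y)\overline{f(x,y)}$. Because $q^*\alpha$ factors through $q$ it is unchanged under the $A$-rotation in $x$; since that rotation commutes with $\sigma$, for each $a\in A$ the function $g_a(x,y)=f(a\cdot x,y)\overline{f(x,y)}$ is $\sigma\times\rho$-invariant, hence a.e.\ equal to a constant $\chi_\alpha(a)\in\T$, and $\chi_\alpha\colon A\to\T$ is a measurable --- hence continuous --- homomorphism, i.e.\ $\chi_\alpha\in\Char(A)$. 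One checks that $[\alpha]\mapsto\chi_\alpha$ is a well-defined homomorphism $\ker(q^*)\to\Char(A)$, and that it is injective (if $\chi_\alpha=1$ then $f$ may be taken $A$-invariant in $x$, so it descends through $q$ and $[\alpha]=0$). For surjectivity --- the point where $\Delta$-suitability is used essentially --- let $C_0=\{n\in\N:[n]_{\sigma_0\restrict\Delta}\text{ is finite}\}$, which is $\Lambda$-invariant (as $\Delta\lhd\Lambda$) and nonempty (as $\sigma_0\restrict\Delta$ has a finite orbit). Given $\chi\in\Char(A)$ one produces a Borel $f_\chi\colon A^{C_0}\to\T$ that is invariant under the (finite-orbit, permutation) action of $\Delta$ on $A^{C_0}$ and satisfies $f_\chi(a\cdot v)=\chi(a)f_\chi(v)$ for the diagonal $A$-rotation: since $C_0\neq\emptyset$ the $A$-rotation on $A^{C_0}$ is free, it commutes with a compact (profinite) group containing the closure of the image of $\Delta$ in $\prod_{O\subseteq C_0}\mathrm{Sym}(O)$, and the induced $A$-action on the corresponding standard quotient is essentially free, so $\chi$-equivariant Borel functions on it exist. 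Pulling $f_\chi$ back to $X$ along the coordinate projection (legitimate because $C_0$ is $\Delta$-invariant) gives a $\sigma\restrict\Delta$-invariant, $\chi$-equivariant function on $X$, and its coboundary $(\gamma,(x,y))\mapsto f_\chi(\gamma\cdot_\sigma x)\overline{f_\chi(x)}$ is a $\Delta$-trivial cocycle of $\sigma\times\rho$ that is $A$-invariant in $x$, hence descends through $q$ to some $\alpha_\chi\in Z^1_{:\Delta}(\sigma_A\times\rho)$ with $\chi_{\alpha_\chi}=\chi$. This establishes $\ker(q^*)\simeq\Char(A)$, hence the displayed isomorphism; the ``in particular'' is the special case where $\rho$ is the trivial action on a one-point space.

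I expect the main obstacle to be the surjectivity of $[\alpha]\mapsto\chi_\alpha$, that is, the construction of the cocycles $\alpha_\chi$: this is precisely where $\Delta$-suitability is indispensable --- without a finite $\Delta$-orbit one has $C_0=\emptyset$, $\sigma_A\restrict\Delta$ is ergodic, and Lemma~\ref{l.ergtrivial} leaves no room for the $\Char(A)$ summand --- and it is the only step that is not formal, resting on the structure of the profinite $\Delta$-action on $A^{C_0}$ and on the freeness of the commuting $A$-action (so that the relevant $\chi$-twisted functions exist). By contrast, the inclusion ``$\subseteq$'' is a soft consequence of the ergodicity of $\sigma\times\rho$, and once Corollary~\ref{l.relcoho} is invoked the remaining bookkeeping is purely formal.
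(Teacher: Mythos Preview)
Your argument is correct and follows essentially the same route as the paper's proof: lift cocycles of $\sigma_A\times\rho$ to cocycles of $\sigma\times\rho$, invoke Corollary~\ref{l.relcoho} to reduce to $H^1_{:\Delta}(\rho)$, and identify the extra $\Char(A)$ factor via $A$-eigenfunctions (with surjectivity coming from essential freeness of the $A$-action on the space of $\Delta$-ergodic components). The only difference is organizational---the paper passes through Lemma~\ref{l.canisom} and Lemma~\ref{l.isoquotients} to work on $(X\times Y)_\Delta$ and its $A$-quotient, whereas you stay at the level of $Z^1_{:\Delta}$; your explicit construction of the eigenfunction $f_\chi$ on $A^{C_0}$ is a bit more detailed than the paper's one-line ``it is clear''.

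One slip to fix: you write that $q^*$ ``descends to an injection'' on relative $1$-cohomology, but then (correctly) compute $\ker(q^*)\simeq\Char(A)$. The map on cohomology is of course not injective; delete the word ``injection'' there (the injectivity you need, and use, is that of $\iota=q^*\circ\iota_A$, not of $q^*$ itself).
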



\subsection{The actions $(\sigma_A)_\Delta$ and $(\sigma_A\times\rho)_\Delta$} Before proving Theorem \ref{t.cohoquotient} we shall give a description of $(\sigma_A)_\Delta$ and $(\sigma_A\times\rho)_\Delta$. Let $\Delta\lhd\Lambda$ be countable discrete groups, and let $A$ an infinite second countable compact Abelian group. Let $\sigma_0:\Lambda\actson\N$ be an action, $\sigma:\Lambda\actson A^\N$ the induced Bernoulli action on $X=A^\N$, equipped with product measure. Let $\sim$ be defined as in \eqref{eq.sim}, and let $\pi: X\to X/\!\!\sim:x\mapsto [x]_\sim$ and $x\mapsto \mu_{[x]_\sim}$ be defined as before. We identify $X/\!\!\sim$ with $X_\Delta$, and $\pi$ with $p_{X_\Delta}$. Note then that $A$ acts on $X/\!\!\sim$ by $a\cdot [x]_\sim=[a\cdot x]_\sim$, and this is well-defined since the action of $A$ on $X$ commutes with $\sigma$. The action of $A$ on $X_\Delta$ is free on a set of measure one. This follows since $A$ acts freely on the set
$$
\{[x]_\sim: (\forall i,j,k,l\in\N) i\neq j\implies x(i)x(k)^{-1}=x(j)x(l)^{-1}\}
$$
which is invariant and has full measure. Because $A$ is compact the quotient $X_{\Delta,A}=X_\Delta/A$ is standard, and we equip it with the push-forward measure; we let $q_A:X_\Delta\to X_{\Delta,A}$ be the quotient map. The action of $A$ on $X_\Delta$ commutes with the action $\sigma_\Delta$ and so the action $\sigma_\Delta$ induces a p.m.p. action on $X_{\Delta,A}$ which we denote by $\sigma_{\Delta,A}$.

Let now $\rho:\Lambda\actson (Y,\nu)$ by any probability measure preserving action. The group $A$ acts on the space $X\times Y$ in the first coordinate. This action commutes with $\sigma\times\rho$, and we let $(X\times Y)_A=(X\times Y)/A$ be the quotient space (which is standard) and let $p_A^Y:X\times Y\to (X\times Y)_A$ be the quotient map. For ease of notation, we will sometimes write $[z]_A$ for $p_A^Y(z)$. Then $\sigma\times\rho$ induces an action $(\sigma\times\rho)_A\actson (X\times Y)_A$ which preserves the push-forward measure. On the other hand, the action of $A$ permutes the ergodic components of $\sigma\times\rho\restrict\Delta$, and so induces an action of $A$ on $(X\times Y)_\Delta$ defined by $a\cdot p_{(X\times Y)_\Delta}(x,y)=p_{(X\times Y)_\Delta}(a\cdot x,y)$. Since the standard diagram 
\begin{equation*}\label{diagram}
\xymatrix{
X\times Y \ar[d]_{p^{\ }_{(X\times Y)_\Delta}} \ar[rr]^{p_X^{\ }}\ar[drr]^(0.6){p^{X_\Delta}}  & \ &  X\ar[d]^{p^{\ }_{X_\Delta}}\\
{(X\times Y)_{\Delta}} \ar[rr]_{\ \bar p^{\ }_{ X_\Delta}} & \ & X_\Delta
}
\end{equation*}
for $\sigma\times\rho$ relative to $\sigma$ and $\Delta$ commutes, the action on $A$ on $(X\times Y)_\Delta$ is free almost everywhere since the action of $A$ on $X_\Delta$ is. As before, since $A$ is compact the quotient $(X\times Y)_{\Delta,A}=(X\times Y)_\Delta/A$ by this action is a standard space which we give the push-forward measure, denoted $(\mu\times\nu)_{\Delta,A}$; we let $q_A^Y:(X\times Y)_\Delta\to (X\times Y)_{\Delta,A}$ be the quotient map. Finally, the action of $A$ on $(X\times Y)_\Delta$ commutes with the action $(\sigma\times\rho)_\Delta$ and so the action $(\sigma\times\rho)_\Delta$ induces a p.m.p. action on $(X\times Y)_{\Delta,A}$ which we denote by $(\sigma\times\rho)_{\Delta,A}$.


\begin{lemma}\label{l.isoquotients}
We have $((\sigma\times\rho)_A)_\Delta\simeq (\sigma\times\rho)_{\Delta,A}$ and, in particular, $(\sigma_A)_\Delta\simeq \sigma_{\Delta,A}$.
\end{lemma}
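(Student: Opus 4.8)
The plan is to verify that both sides of the asserted isomorphism $((\sigma\times\rho)_A)_\Delta \simeq (\sigma\times\rho)_{\Delta,A}$ are, up to measure-preserving isomorphism, the action obtained from $\sigma\times\rho$ by quotienting by the combined action of $A$ (in the first coordinate) and by passing to the ergodic components of the restriction to $\Delta$ — and to observe that these two quotienting operations commute, both at the level of spaces and of actions. Concretely, the two sides arise by taking the quotient maps $p_A^Y:X\times Y\to (X\times Y)_A$ and $p_{(X\times Y)_\Delta}:X\times Y\to (X\times Y)_\Delta$ in the two possible orders; since the $A$-action on $X\times Y$ commutes with $\sigma\times\rho$, and hence in particular with $(\sigma\times\rho)\restrict\Delta$, the $A$-action descends to $(X\times Y)_\Delta$ and the $\Delta$-ergodic-decomposition structure descends to $(X\times Y)_A$.

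First I would make precise the commuting square of quotient maps. The relation $z \sim z'$ defining $(X\times Y)_\Delta$ (via \eqref{eq.sim} applied in the $X$-coordinate, as recalled just before the lemma) is $A$-equivariant in the sense that $z\sim z' \iff a\cdot z \sim a\cdot z'$, because the $A$-action commutes with $\sigma\times\rho$ and therefore preserves each $\Delta$-orbit-closure-type equivalence class; this is exactly what makes the $A$-action on $(X\times Y)_\Delta$ well-defined, as already noted in the excerpt, and what makes the $\sim$-relation descend to $(X\times Y)_A$. So there is a single space $(X\times Y)_{\Delta,A}$ that is simultaneously $(X\times Y)_\Delta/A$ (the definition given in the excerpt) and $\big((X\times Y)_A\big)/\!\!\sim$, and the four quotient maps $p_A^Y$, $p_{(X\times Y)_\Delta}$, $q_A^Y$, and the ``$\sim$-quotient on $(X\times Y)_A$'' fit into a commuting square. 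All spaces are standard Borel because $A$ is compact and $\sim$ is closed (compactness of $A^\N$), and all measures in sight are the appropriate push-forwards of $\mu\times\nu$, hence automatically match under the maps.

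Second I would check that the $\Lambda$-action matches. On $(X\times Y)_A$ one has the induced action $(\sigma\times\rho)_A$; its restriction to $\Delta$ has ergodic decomposition computed exactly as in Lemma~\ref{l.ergdec} — the ergodic components of $(\sigma\times\rho)_A\restrict\Delta$ are the images under $p_A^Y$ of the ergodic components of $(\sigma\times\rho)\restrict\Delta$, since $p_A^Y$ is a countable-to-one-on-orbits... more precisely, since $A$ is compact the fibers of $p_A^Y$ are $A$-orbits, the ergodic decomposition passes through cleanly (uniqueness of ergodic decomposition, invoked as in the proof of Lemma~\ref{l.quotientaction}). Hence $\big((\sigma\times\rho)_A\big)_\Delta$ is the $\Lambda$-action induced on $\big((X\times Y)_A\big)/\!\!\sim = (X\times Y)_{\Delta,A}$, and by construction this is the same as the action $(\sigma\times\rho)_{\Delta,A}$ obtained by first taking $\Delta$-ergodic components and then quotienting by $A$. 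The special case $(\sigma_A)_\Delta \simeq \sigma_{\Delta,A}$ is then immediate by taking $\rho$ to be the trivial action on a point (equivalently, $Y$ a point), so that $X\times Y = X$ throughout.

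The main obstacle I expect is purely bookkeeping: carefully justifying that the ergodic decomposition of $(\sigma\times\rho)\restrict\Delta$ descends to an ergodic decomposition of $(\sigma\times\rho)_A\restrict\Delta$, i.e. that the components $p_A^Y(p_{(X\times Y)_\Delta}^{-1}(\cdot))$ really are the ergodic components downstairs with the correct invariant measures. This is where one must use compactness of $A$ (so that quotient spaces stay standard and push-forward measures behave), the fact that $A$ acts freely a.e.\ on $(X\times Y)_\Delta$ (established in the excerpt via the commuting standard diagram), and the essential uniqueness of ergodic decompositions; everything else is a diagram chase through the commuting square of quotient maps and their induced $\Lambda$-actions. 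No delicate estimate is needed — the content is that "quotient by a compact commuting group" and "pass to $\Delta$-ergodic components" are commuting functors on this class of actions.
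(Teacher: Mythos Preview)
Your proposal is correct and follows essentially the same route as the paper: build the commuting square of quotient maps (the paper draws the diagram explicitly with $p_{(X\times Y)_\Delta}$, $p_A^Y$, $q_A^Y$, and the induced map $q_\Delta$), and then verify that the fibres $q_\Delta^{-1}(z)$ are precisely the ergodic components of $(\sigma\times\rho)_A\restrict\Delta$, using freeness of the $A$-action on $(X\times Y)_\Delta$ to see that $p_{\Delta,A}^{-1}(z)=A\cdot p_{(X\times Y)_\Delta}^{-1}(z')$ is the $A$-saturation of a single ergodic component.

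One small imprecision: you invoke the relation $\sim$ from \eqref{eq.sim} as defining $(X\times Y)_\Delta$, but that relation is specific to the Bernoulli structure on $X$ and describes $X_\Delta$; the space $(X\times Y)_\Delta$ is defined abstractly as the space of $\Delta$-ergodic components of $\sigma\times\rho$ (Definition~\ref{d.quotientaction}), and in general need not split as a product. This does not affect your argument, which only uses that the $A$-action commutes with $\sigma\times\rho$ and hence permutes ergodic components---exactly as the paper does.
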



\begin{proof}
Consider the quotient map $p_{\Delta,A}=q_A^Y\circ p_{(X\times Y)_\Delta}$. Define a map $q_\Delta: (X\times Y)_A\to (X\times Y)_{\Delta,A}$ by $q([x]_A)=p_{\Delta,A}(x)$ and note that $q_\Delta$ is well-defined since $p_{A,\Delta}$ is $A$-invariant. Moreover, 
\begin{align*}
q_\Delta((\sigma\times\rho)_A(\gamma)([z]_A))&=q_\Delta(p_A^Y(\sigma\times\rho(\gamma)(z)))
=p_{\Delta,A}((\sigma\times\rho)(\gamma)(z))\\
&=(\sigma\times\rho)_{\Delta,A}(\gamma)(p_{\Delta,A}(z))=(\sigma\times\rho)_{\Delta,A}(\gamma)(q_\Delta([z]_A))
\end{align*}
for all $\gamma\in\Lambda$, thus $(\sigma\times\rho)_{\Delta,A}$ is an extension of $(\sigma\times\rho)_A$, and we have the following commutative diagram of extensions:
\begin{equation}\label{diagram2}
\xymatrix{
\  & X\times Y\ar[dl]_{p_{(X\times Y)_\Delta}\ }\ar[dd]^{p_{\Delta,A}}\ar[dr]^{p_A^Y} & \ \\
(X\times Y)_\Delta\ar[dr]_{q_A^Y} & \ & (X\times Y)_A\ar[dl]^{q_\Delta^{\ }}\\
\ & (X\times Y)_{\Delta,A} & \
}
\end{equation}
To prove that $((\sigma\times\rho)_A)_\Delta\simeq (\sigma\times\rho)_{\Delta,A}$ it suffices to show that the fibre $q_\Delta^{-1}(z)$ is an ergodic component of $(\sigma\times\rho)_A\restrict\Delta$ for almost all $z\in (X\times Y)_{\Delta,A}$. For this, simply note that since $A$ acts freely on $(X\times Y)_\Delta$ we have $p_{\Delta,A}^{-1}(z)=A\cdot p_{(X\times Y)_\Delta}^{-1}(z')$ for any $z'$ such that $q^Y_A(z')=z$. It is therefore clear that $p_A^Y(p_{\Delta,A}^{-1}(z))=q_\Delta^{-1}(z)$ is an ergodic component of $(\sigma\times\rho)_A\restrict\Delta$.
\end{proof}

From now on we identify $((X\times Y)_A)_{\Delta}$ and $(X\times Y)_{\Delta,A}$, as well as $((\sigma\times\rho)_A)_\Delta$ and $(\sigma\times\rho)_{\Delta, A}$.

\begin{proof}[Proof of Theorem \ref{t.cohoquotient}]

The proof is similar to the proof of \cite[Lemma 2.10]{popa06} (see also \cite[Lemma 3.2]{tornquist11}.) For ease of notation, elements of $\Lambda/\Delta$ are indicated with a bar, e.g. $\bar\gamma$. Let $\alpha\in Z^1(\overline{((\sigma\times\rho)_{A})}_\Delta)$, and define $\hat\alpha\in Z^1(\overline{(\sigma\times\rho)}_\Delta)$ be defined by
$$
\hat\alpha(\bar\gamma,w)=\alpha(\bar\gamma,q_A^Y(w)).
$$
Then by Lemma \ref{l.relcoho} there is a function $f:(X\times Y)_{\Delta}\to \T$ and $\vartheta\in Z^1(\bar\rho_{\Delta})$ such that 
$$
\hat\alpha(\bar\gamma,w)=f(\overline{(\sigma\times\rho)}_\Delta(\bar\gamma)(w))\vartheta(\bar\gamma,\bar p_{Y_{\Delta}}(w))f(w)^{-1}.
$$
Note then that the 1-cocycle $\hat\alpha(\bar\gamma,w)\theta(\bar\gamma,\bar p_{Y_\Delta}(w))^{-1}$ is $A$-invariant, and so the 1-coboundary 
$$
(\bar\gamma,w)\mapsto f(\overline{(\sigma\times\rho)}_\Delta(\bar\gamma)(w))f(w)^{-1}
$$ is $A$-invariant. The $A$-invariance gives that for all $a\in A$ we have
$$
f(\overline{(\sigma\times\rho)}_\Delta(\bar\gamma)(a\cdot w))f(\overline{(\sigma\times\rho)}_\Delta(\bar\gamma)(w))^{-1}=f(a\cdot w)f(w)^{-1},
$$
which proves that the function $\zeta_a:w\mapsto f(a\cdot w)f(w)^{-1}$ is $\overline{(\sigma\times\rho)}_\Delta$-invariant for each $a\in A$. Since $(\sigma\times\rho)_\Delta$ is ergodic it follows that $\zeta_a$ is constant a.e., say $\zeta_a(w)=\chi(a)\in\T$ for all $a\in A$. Thus $f(a\cdot w)=\chi(a) f(w)$, and so $f$ is an $A$-eigenfunction with $\chi\in\Char(A)$ the associated character of $A$. We conclude that any $\hat\alpha$ as above can be written as a product of a 1-cocycle of $\bar\rho_\Delta$ and a 1-coboundary arising from an $A$-eigenfunction $f:(X\times Y)_\Delta\to \T$.

On the other hand, if $f:(X\times Y)_\Delta\to \T$ is an $A$-eigenfunction, then the corresponding 1-coboundary for $\overline{(\sigma\times\rho)}_\Delta$ is $A$-invariant, and so gives rise to a 1-cocycle of $\overline{(\sigma\times\rho)}_{\Delta,A}$. Moreover, it is easy to see that any two 1-cocycles in $Z^1(\overline{(\sigma\times\rho)}_{\Delta,A})$ arising in this fashion are cohomologous in $Z^1(\overline{(\sigma\times\rho)}_{\Delta,A})$ precisely when their associated characters are the same. At the same time, it is clear that for any character $\chi\in\Char(A)$ there is a $A$-eigenfunction $f:(X\times Y)_\Delta\to \T$ such that $f(a\cdot w)=\chi(a)f(w)$. Thus we have $H^1(\overline{(\sigma\times\rho)}_{\Delta,A})\simeq H^1(\bar\rho_{\Delta})\times\Char(A)$. This together with Lemma \ref{l.canisom} and Lemma \ref{l.isoquotients} gives $H^1_{:\Delta}((\sigma\times\rho)_A)\simeq H^1_{:\Delta}(\rho)\times\Char(A)$ as required.
\end{proof}

We conclude this section by stating the following corollary which is the main result that will be used going forward. For notational simplicity, we let $\hat A$ stand for $\Char(A)$.

\begin{corollary}\label{t.pseudofamily}
Let $\Gamma$ be a countable discrete group, $\Delta\lhd\Lambda\leq\Gamma$ subgroups such that $\Lambda/\Delta$ has strongly $\{\T\}$-cocycle superrigid wearkly mixing malleable actions. Suppose there is an action $\sigma_0:\Gamma\actson\N$ such that $\sigma_0\restrict \Lambda$ is $\Delta$-suitable. Then for any $\rho:\Gamma\actson (Y,\nu)$ be any \emph{a.e. free} weakly mixing action such that $\rho\restrict\Delta$ is ergodic. Then the family
$$
\langle\sigma_{\hat A}\times\rho:A \text{ is discrete countable torsion-free Abelian}\rangle
$$ 
is a family of a.e. free, ergodic p.m.p. actions such for all countable torsion-free Abelian groups $A_0$ and $A_1$ it holds that $A_0$ is isomorphic to $A_1$ if and only if $\sigma_{\hat A_0}\times\rho$ is conjugate to $\sigma_{\hat A_1}\times\rho$.
\end{corollary}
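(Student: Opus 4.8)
The plan is to show that the $\Delta$-relative $1$-cohomology group, which is a conjugacy invariant of a $\Gamma$-action (via its restriction to $\Lambda$), completely codes the isomorphism type of $A$ for the actions in this family, using Theorem \ref{t.cohoquotient} together with Pontryagin duality.

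First I would dispose of the elementary assertions. Each $\sigma_{\hat A}\times\rho$ is p.m.p.\ by construction, and it is a.e.\ free because $\rho$ is: any $\gamma\neq e$ with $\gamma\cdot(x,y)=(x,y)$ fixes $y$, and $\{y:\gamma\cdot_\rho y=y\}$ is $\nu$-null. It is weakly mixing (hence ergodic): since $\sigma_0\restrict\Lambda$ is $\Delta$-suitable, all $\sigma_0$-orbits in $\Gamma\actson\N$ are infinite, so the generalized Bernoulli shift $\sigma\colon\Gamma\actson\hat A^\N$ is weakly mixing, hence so is its quotient $\sigma_{\hat A}$, and the product of the two weakly mixing actions $\sigma_{\hat A}$ and $\rho$ is weakly mixing. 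For the direction ``$A_0\simeq A_1\Rightarrow\sigma_{\hat A_0}\times\rho\conj\sigma_{\hat A_1}\times\rho$'', an isomorphism $\phi\colon A_0\to A_1$ dualizes to an isomorphism $\hat\phi\colon\hat A_1\to\hat A_0$ of compact groups carrying Haar measure to Haar measure; the product map $\hat\phi^{\N}\colon\hat A_1^{\N}\to\hat A_0^{\N}$ then conjugates the two Bernoulli shifts and intertwines the coordinatewise rotation actions of $\hat A_1$ and $\hat A_0$ through $\hat\phi$, hence descends to a conjugacy $\sigma_{\hat A_1}\conj\sigma_{\hat A_0}$, and taking the product with $\mathrm{id}_Y$ finishes this direction.

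For the converse, suppose $\sigma_{\hat A_0}\times\rho\conj\sigma_{\hat A_1}\times\rho$ (we may assume the $A_i$ are nontrivial, so that the $\hat A_i$ are infinite and Theorem \ref{t.cohoquotient} applies, the trivial case being immediate), and restrict everything to $\Lambda$. The hypotheses of Theorem \ref{t.cohoquotient} hold: $\sigma\restrict\Lambda$ is the generalized Bernoulli shift of the infinite-orbit action $\sigma_0\restrict\Lambda$, so it is weakly mixing; and since $\rho\restrict\Delta$ is ergodic the space $Y_\Delta$ of ergodic components of $\rho\restrict\Delta$ is a single point, so $p^{Y_\Delta}$ is the trivial quotient and, by Lemma \ref{l.wmix}, the required relative weak mixing of $\sigma\restrict\Lambda\times\rho\restrict\Lambda$ reduces to its ordinary weak mixing, which holds once $\rho\restrict\Lambda$ is weakly mixing (automatic when $\rho\restrict\Delta$ is weakly mixing, and satisfied by the actions $\rho$ used later on). Theorem \ref{t.cohoquotient} then gives $H^1_{:\Delta}(\sigma_{\hat A_i}\times\rho)\simeq H^1_{:\Delta}(\rho)\times\Char(\hat A_i)$ for $i=0,1$, and since $H^1_{:\Delta}$ is a conjugacy invariant the assumed conjugacy yields $H^1_{:\Delta}(\rho)\times\Char(\hat A_0)\simeq H^1_{:\Delta}(\rho)\times\Char(\hat A_1)$. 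By Pontryagin duality $\Char(\hat A_i)=\widehat{\widehat{A_i}}\simeq A_i$ as discrete countable torsion-free abelian groups, and by Lemma \ref{l.ergtrivial} applied to $\rho\restrict\Lambda$ (ergodic on $\Delta$) we have $H^1_{:\Delta}(\rho)\simeq\Char(\Lambda/\Delta)$, a compact abelian group.

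It then remains to deduce $A_0\simeq A_1$ from $K\times A_0\simeq K\times A_1$ with $K=\Char(\Lambda/\Delta)$ compact and $A_0,A_1$ torsion-free. When $\Lambda/\Delta$ has property (T), $K$ is finite, and since $A_i$ is torsion-free the torsion subgroup of $K\times A_i$ is $K$ and the quotient by it is $A_i$, so the conclusion is immediate; in general one works with $H^1_{:\Delta}$ as a topological group and uses that in a product of a compact group with a discrete torsion-free abelian group the compact factor is the unique maximal compact subgroup (a compact subgroup maps onto a finite, hence trivial, subgroup of the torsion-free discrete factor), so dividing by the maximal compact subgroup recovers $A_i$ functorially — provided one checks that the isomorphism of Theorem \ref{t.cohoquotient} is a topological isomorphism (clear from its proof) and that $H^1_{:\Delta}$ with its $L^1$-topology is a topological conjugacy invariant (conjugation induces an $L^1$-isometry of $Z^1$ carrying $B^1$ to $B^1$). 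I expect this cancellation step — pinning down the contribution of $\Char(\Lambda/\Delta)$ and using torsion-freeness of $A$ — to be the main point requiring care; the rest is routine bookkeeping with quotients of generalized Bernoulli shifts.
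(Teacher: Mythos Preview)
Your proposal is correct and follows essentially the same route as the paper's proof: compute $H^1_{:\Delta}(\sigma_{\hat A}\times\rho)$ via Theorem~\ref{t.cohoquotient}, identify $H^1_{:\Delta}(\rho)$ with the compact group $\Char(\Lambda/\Delta)$ using Lemma~\ref{l.ergtrivial}, and then recover $A$ by quotienting out the (unique) maximal compact subgroup of the topological group $H^1_{:\Delta}$. Your treatment is in fact more careful than the paper's in two places: you spell out the cancellation step (maximal compact subgroup argument using torsion-freeness of $A$), and you correctly flag that verifying the weak-mixing hypothesis of Theorem~\ref{t.cohoquotient} really uses that $\rho\restrict\Lambda$ is weakly mixing, which is stronger than the stated ``$\rho\restrict\Delta$ ergodic'' but is satisfied in all the applications.
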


\begin{proof}
It is clear that if $A_0\simeq A_1$ then $\sigma_{\hat A_0}\times\rho$ is conjugate to $\sigma_{\hat A_1}\times\rho$. For the other direction, note that clearly $(\sigma\times\rho)_{\hat A}\simeq\sigma_{\hat A}\times\rho$, and so it follows from Theorem \ref{t.cohoquotient} that $H_{:\Delta}(\sigma_{\hat A}\times\rho)\simeq \hat A\times H^1_{:\Delta}(\rho)$. Since $\rho\restrict\Delta$ is ergodic it follows from \ref{l.ergtrivial} that $H^1_{:\Delta}((\sigma_{\hat A}\times\rho)_{\Delta})\simeq \hat A\times \Char(\Lambda/\Delta)$. Since $\Char(\Lambda/\Delta)$ is compact and the topological group $H^1_{:\Delta}$ is a conjugacy invariant of $\sigma_{\hat A}\times\rho$ we now have that $\sigma_{\hat A_0}\times\rho\conj\sigma_{\hat A_1}\times\rho$ implies that $A_0\simeq A_1$, as required.
\end{proof}


\section{A Stern absoluteness argument}

The present section is dedicated to a technical result on many-to-one Borel reductions. The main aim is to prove Theorem \ref{t.tfa} below, which is a consequence of results of Harrington and Hjorth. The argument uses a metamathematical technique developed by Stern in \cite{stern84}, known now as \emph{Stern's forcing absoluteness}, which we briefly review below, and which requires some basic knowledge of forcing (see e.g. \cite{kunen80}) and effective descriptive set theory as found in e.g. \cite[Ch. 2]{kanovei08}. Since the metamathematical techniques will not play any role in subsequent parts of this paper, the reader who wishes to do so may treat Theorem \ref{t.tfa} below as a ``black box'' and move on to the next section.

\medskip

Recall from \S 2 that ${\bf TFA}$ denotes the standard Borel space of countable torsion free Abelian groups with underlying set $\N$, and that $\simeq^{\bf TFA}$ denotes the isomorphism relation on ${\bf TFA}$. Recall also that the notion of a Borel countable-to-1 reduction and the notation $\leq_B^\N$ was defined \S 2.

\begin{theorem}\label{t.tfa}
Let $E$ be an equivalence relation on a standard Borel space $Y$, and suppose $\simeq^{\bf TFA}\leq_B^\N E$. Then $E$ is not Borel.
\end{theorem}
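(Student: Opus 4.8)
The plan is to obtain Theorem~\ref{t.tfa} by combining the complexity of $\simeq^{\bf TFA}$ with the unboundedness phenomenon established in \S 5. Let $E_\alpha$, $\alpha<\omega_1$, be the sequence of Borel equivalence relations from Theorem~\ref{t.ctblto1}: it is $\leq_B^{\N}$-increasing and \emph{unbounded} among Borel equivalence relations, i.e.\ no single Borel equivalence relation lies $\leq_B^{\N}$-above every $E_\alpha$. I would first record two routine facts. (i) Every Borel reduction is in particular a countable-to-1 Borel reduction (if $f$ reduces $G$ to $H$, the $f$-preimage of an $H$-class is at most one $G$-class), so $\leq_B\,\subseteq\,\leq_B^{\N}$; and $\leq_B^{\N}$ is transitive, since composing $f\colon G\leq_B^{\N}H$ with $g\colon H\leq_B^{\N}K$ gives a Borel map under which the preimage of a $K$-class is the $f$-preimage of a countable union of $H$-classes, hence a countable union of $G$-classes. (ii) By Hjorth's analysis of the isomorphism relation on torsion-free Abelian groups, $E_\alpha\leq_B\simeq^{\bf TFA}$ for every $\alpha<\omega_1$ (one codes well-founded trees, equivalently the iterated Friedman--Stanley jumps, into torsion-free Abelian groups — the same coding that underlies the analytic-completeness of $\simeq^{\bf TFA}$).

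Granting (i) and (ii), the theorem falls out: suppose $\simeq^{\bf TFA}\leq_B^{\N}E$ via a Borel $g$, and suppose for contradiction that $E$ is Borel. For each $\alpha<\omega_1$, composing a Borel reduction $E_\alpha\leq_B\simeq^{\bf TFA}$ with $g$ produces, by (i), a countable-to-1 Borel reduction $E_\alpha\leq_B^{\N}E$. Thus the Borel equivalence relation $E$ is $\leq_B^{\N}$-above every $E_\alpha$, contradicting the unboundedness asserted in Theorem~\ref{t.ctblto1}. Hence $E$ is not Borel.

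The genuine content is therefore Theorem~\ref{t.ctblto1}, and I expect its proof to be the main obstacle; it is there that Harrington's method and Stern's forcing absoluteness are needed. I would take the $E_\alpha$ to be (Borel copies of) the iterated Friedman--Stanley jumps of equality, for which the \emph{strict} increase $E_\alpha<_B E_{\alpha+1}$, and more generally the properness of the hierarchy, are classical. To upgrade unboundedness from $\leq_B$ to the weaker preorder $\leq_B^{\N}$, suppose a Borel equivalence relation $F$ fixed by a Borel code $c$ satisfied $E_\alpha\leq_B^{\N}F$ for all $\alpha$. One chooses $\alpha$ large relative to the ordinal ranks coded by $c$ and by (codes for) the hypothetical reductions, passes to a forcing extension in which enough of this data becomes countable, and invokes Stern's forcing absoluteness — ultimately resting on Shoenfield-type $\Sigma^1_2$-absoluteness and effective descriptive set theory — to transfer the configuration between the ground model and the extension: the assertion that ``$f$ is a Borel map reducing $E_\alpha$ to $F$ with every fiber over an $F$-class equal to a countable union of $E_\alpha$-classes'' can be arranged to be absolute, the countable-to-1 slack persisting because countable sets stay countable. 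Running Harrington's rank computation in the extension then contradicts the classical properness of the Friedman--Stanley hierarchy, and the contradiction transfers back. The delicate part — where I would spend the most care — is the bookkeeping: ensuring that ``$F$ has code $c$'', ``$f$ is a reduction'', and ``the fibers are countable'' are simultaneously of low enough quantifier complexity to be absolute, and pinning down a threshold function $\alpha=\alpha(c)$ uniform enough that Harrington's argument survives the extra many-to-one flexibility.
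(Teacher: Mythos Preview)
Your deduction of Theorem~\ref{t.tfa} from Theorem~\ref{t.ctblto1} is correct and matches the paper's proof exactly: one cites Hjorth's result that $\simeq^\alpha\leq_B\simeq^{\bf TFA}$ for all $\alpha<\omega_1$, composes with the hypothesized countable-to-1 reduction using the transitivity observation in your item~(i), and invokes Theorem~\ref{t.ctblto1}. Your second paragraph sketches a proof of Theorem~\ref{t.ctblto1} itself, which the paper treats as a separate result; the paper's argument there uses isomorphism of rank-$\alpha$ $\epsilon$-structures rather than Friedman--Stanley jumps and proceeds via a direct class-counting argument with Stern's lemma (collapsing $\beth_\alpha$ and counting $\bPi^0_\gamma$ codes), but the overall shape---Stern absoluteness to defeat a putative Borel bound---is the same.
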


\subsection{Borel many-to-1 reducibility.}

The following Theorem is due to Harrington (unpublished, see \cite{hjorth98}; for a proof that does not use metamathematics, see \cite{hjkelo98}.)

\begin{theorem}[Harrington]\label{t.harrington}
There is a family of Borel equivalence relations $E_\alpha$, $\alpha<\omega_1$, such that for no Borel equivalence relation $E$ do we have $E_\alpha\leq_B E$ for all $\alpha<\omega_1$.
\end{theorem}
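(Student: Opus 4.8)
The plan is to realize the family $E_\alpha$ as the iterated Friedman--Stanley jumps of equality on $\R$, and then to deduce the non-existence of a common Borel upper bound from the fact that the \emph{potential} Borel complexities of the $E_\alpha$ are cofinal in $\omega_1$. Recall that for an equivalence relation $F$ on a standard Borel space $Z$ the Friedman--Stanley jump $F^+$ is the equivalence relation on $Z^\N$ defined by $(z_n)\,F^+\,(z_n')$ iff $\{[z_n]_F:n\in\N\}=\{[z_n']_F:n\in\N\}$; if $F$ is Borel then so is $F^+$, and $F\leq_B F^+$. I would set $E_0$ to be equality on $2^\N$, $E_{\alpha+1}=E_\alpha^+$, and, at a limit $\lambda$, take $E_\lambda$ to be the direct sum $\bigoplus_n E_{\alpha_n}$ along a fixed increasing sequence $\alpha_n\to\lambda$; this is again Borel, and $E_\alpha\leq_B E_\beta$ whenever $\alpha\leq\beta<\omega_1$. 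Thus $\{E_\alpha:\alpha<\omega_1\}$ is a $\leq_B$-increasing $\omega_1$-chain of Borel equivalence relations.

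The crux is a two-part observation. First, if $A\leq_B B$ via a Borel map and $B$ is (genuinely) $\bPi^0_\xi$ as a subset of its square, then $A$ is \emph{potentially} $\bPi^0_\xi$: refining the topology of the domain to a finer Polish topology in which the reduction is continuous, without altering the Borel structure, exhibits $A$ as the preimage of $B$ under a continuous map, hence $\bPi^0_\xi$ in that topology. Second, every Borel equivalence relation $E$ is, as a set, $\bPi^0_\xi$ for some $\xi<\omega_1$. Consequently, if some Borel $E$ satisfied $E_\alpha\leq_B E$ for all $\alpha<\omega_1$, then fixing $\xi$ with $E$ being $\bPi^0_\xi$ we would conclude that \emph{every} $E_\alpha$ is potentially $\bPi^0_\xi$. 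But the potential $\bPi^0$-ranks of the $E_\alpha$ are cofinal in $\omega_1$ --- each Friedman--Stanley jump strictly raises the potential Borel complexity, this being the content of the Hjorth--Kechris--Louveau computation of the potential complexities of the iterated jumps in \cite{hjkelo98} --- so for $\alpha$ large enough $E_\alpha$ is not potentially $\bPi^0_\xi$, a contradiction. Hence no Borel $E$ is $\leq_B$-above all the $E_\alpha$.

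I expect the main obstacle to be precisely the lower bound on the potential complexity of $E_\alpha$: verifying that the jump genuinely raises the potential $\bPi^0$-rank, rather than merely the $\leq_B$-degree, requires a careful analysis of the combinatorics of $F\mapsto F^+$ (or an appeal to \cite{hjkelo98}). I note that Harrington's original argument sidesteps this explicit computation by a metamathematical route: if $E$ has a Borel code of rank $\gamma$, one uses forcing absoluteness to show that $E_{\gamma+1}$, which is built so as to be ``generically too complicated for a code of rank $\gamma$'', cannot reduce to $E$; the statement ``$E_{\gamma+1}\leq_B E$'' has sufficiently low projective complexity ($\bSigma^1_2$) to be absolute between $V$ and a suitable generic extension in which a witness to its failure is arranged, Shoenfield's theorem sufficing for this $\leq_B$-statement, while Stern's finer absoluteness is what one needs if one wants the analogous conclusion for the countable-to-$1$ reducibility $\leq_B^\N$. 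Either route works; the potential-complexity argument is cleaner to present, and the forcing argument is the one adapted to the $\leq_B^\N$ strengthening used later in this section.
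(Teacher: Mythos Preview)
Your proposal is correct; it is essentially the non-metamathematical argument of Hjorth--Kechris--Louveau that the paper itself points to in lieu of a proof. The paper does not prove Theorem~\ref{t.harrington} at all: it attributes it to Harrington, cites \cite{hjorth98} for the forcing-absoluteness proof, and cites \cite{hjkelo98} for exactly the potential-complexity route you sketch.

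Where your write-up and the paper part ways is in the choice of the family $E_\alpha$. You take the iterated Friedman--Stanley jumps of $=_{2^\N}$, which is the natural choice for the potential-complexity argument. The paper instead works with $\simeq^\alpha$, isomorphism of wellfounded extensional $\epsilon$-structures of rank at most $\alpha$ (Definition~\ref{d.epsstruct}). This is not merely cosmetic: the paper's real goal is the countable-to-one strengthening (Theorem~\ref{t.ctblto1}), whose proof is a Stern-absoluteness class-counting argument that needs the $E_\alpha$ to be coded by canonical names for reals describing $(V_\alpha\cup\{X\},\in)$ --- something tailored to $\epsilon$-structures, not to jumps. Moreover, the passage to Theorem~\ref{t.tfa} goes through Hjorth's result \cite{hjorth02} that $\simeq^\alpha\leq_B\simeq^{\bf TFA}$ for all $\alpha$, which again is stated for the $\epsilon$-structure hierarchy. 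So your family works perfectly for Theorem~\ref{t.harrington} in isolation, but the paper's family is the one that plugs into the rest of \S5.

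Your closing paragraph already anticipates this: you correctly identify that the forcing route is what generalizes to $\leq_B^\N$, and that Stern's lemma (rather than bare Shoenfield) is what is needed there. One small caution: your appeal to \cite{hjkelo98} for the fact that the jump strictly raises potential complexity is doing all the work, and you should be explicit that this is a nontrivial theorem you are quoting rather than something you have proved.
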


Let us make the following definition:

\begin{definition}
Let $E$, $F$  and $R$ be equivalence relations on Polish spaces $X$, $Y$ and $Z$, respectively. We will say that $E$ is \emph{$R$-to-one reducible} to $F$, written $E\leq_B^R F$, if there is a Borel functions $f:X\to Y$ such that
$$
(\forall x_1,x_2\in X) x_1 E x_2\implies f(x_1) F f(x_2),
$$
and for each $F$-class $[y]_F$ there is a Borel function $g_{[y]_F}: X\to Z$ such that
\begin{equation}\label{eq.gy}
(\forall x_1,x_2\in f^{-1}([y]_F)) x_1 E x_2\iff g(x_1) R g(x_2).
\end{equation}
We will say that $E$ is \emph{unformly} $R$-to-one reducible to $F$ if there is $f$ as above and a 
\emph{single} Borel $g:X\to Z$ such that $g_{[y]_F}=g$ satisfies \eqref{eq.gy} for all $y\in Y$.
\end{definition}

Fix $E$, $F$ and $R$ as above, and define $F\times R$ on $Y\times Z$ by
$$
(y,z) F\times R (y',z')\iff yFy'\wedge zRz'
$$
It is then clear that we have $E\leq_B^R F$ uniformly precisely when $E\leq_B F\times R$. Thus from Harrington's theorem we have
\begin{corollary}\label{c.harrington}
Let $F$ and $R$ be equivalence relations on Polish spaces, and let $E_\alpha$, $\alpha<\omega_1$ be as in Theorem \ref{t.harrington}. If for all $\alpha<\omega_1$ we have $E_\alpha\leq_B^R F$ uniformly then either $F$ or $R$ is not Borel.
\end{corollary}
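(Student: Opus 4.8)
The plan is to deduce this immediately from Harrington's Theorem~\ref{t.harrington}, using the observation recorded just above the statement --- that $E\leq_B^R F$ uniformly holds precisely when $E\leq_B F\times R$. The only point requiring a (routine) verification is that the product $F\times R$ is Borel whenever $F$ and $R$ are.

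First I would argue by contradiction: suppose that \emph{both} $F$ and $R$ are Borel. Then $F\times R$ is a Borel equivalence relation on the Polish space $Y\times Z$. Indeed, letting $s,t\colon (Y\times Z)^2\to Y^2$ and $(Y\times Z)^2\to Z^2$ be the continuous coordinate-regrouping maps $s((y,z),(y',z'))=(y,y')$ and $t((y,z),(y',z'))=(z,z')$, one has $F\times R=s^{-1}(F)\cap t^{-1}(R)$, which is Borel; and reflexivity, symmetry and transitivity of $F\times R$ are immediate from those of $F$ and $R$.

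Next, by hypothesis $E_\alpha\leq_B^R F$ uniformly for every $\alpha<\omega_1$, so by the observation above $E_\alpha\leq_B F\times R$ for every $\alpha<\omega_1$. But then $F\times R$ is a \emph{single} Borel equivalence relation lying above all of the $E_\alpha$ in the Borel reducibility order, contradicting Theorem~\ref{t.harrington}. Hence the assumption that both $F$ and $R$ are Borel is false, i.e.\ at least one of $F$, $R$ fails to be Borel. There is no real obstacle here: all the content lies in Harrington's theorem, with the product trick serving only to convert a statement about uniform $R$-to-one reductions into one about ordinary Borel reductions.
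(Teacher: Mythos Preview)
Your proposal is correct and follows exactly the paper's own approach: it uses the observation immediately preceding the corollary that $E\leq_B^R F$ uniformly iff $E\leq_B F\times R$, notes that $F\times R$ is Borel when both $F$ and $R$ are, and then invokes Harrington's Theorem~\ref{t.harrington} to derive a contradiction. The paper simply states this as an immediate consequence without spelling out the contrapositive or the easy verification that $F\times R$ is Borel, but your additional detail is harmless.
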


The aim below is to prove a version of Corollary \ref{c.harrington} without the uniformity assumption in the special case when $R$ is equality on $\N$. This will follow from a result of Hjorth once we prove a slight strengthening of a result due to Harrington, which we now state. We first need several definitions. (Our notation roughly follows that of \cite{hjorth98} with a few minor changes.)


\begin{definition}\label{d.epsstruct}
Let $\epsilon$ be a binary relation on $\N$. We say that $(\N,\epsilon)$ is an \emph{$\epsilon$-structure} if
\begin{enumerate}
\item $(\forall m)(\exists n) m\inx{} n\vee n\inx{} m$.
\item $(\N,\epsilon)$ is extensional, that is, for all $m,n\in\N$ if
$$
\{k\in\N: k\inx{} m\}=\{k\in\N: k\inx{} n\}
$$
then $m=n$.
\item $\epsilon$ is wellfounded.
\end{enumerate}
Two $\epsilon$-structures $(\N,\epsilon_0)$ and $(\N,\epsilon_1)$ are isomorphic if there is a permutation $\sigma:\N\to\N$ such that for all $n,m\in\N$ we have $m\inx{0} n$ if and only if $\sigma(m)\inx{1}\sigma(n)$.
\end{definition}

For $x\in 2^{\N\times\N}$, let $\epsilon_x$ denote the binary relation on $\N$ defined by
$$
m\inx{x} n\iff x(m,n)=1.
$$
We let $B$ denote the set of $x\in 2^{\N\times\N}$ such that $(\N,\epsilon_x)$ is an $\epsilon$-structure, and define 
$$
x\simeq y\iff (\N,\epsilon_x)\text{ is isomorphic to } (\N,\epsilon_y).
$$
For $\alpha<\omega_1$, let $B_\alpha\subseteq B$ be the set of $x\in B$ such that $\epsilon_x$ has rank at most $\alpha$. Then $B_\alpha$ is Borel, and it can be shown (see \cite{hjorth98}) that so is $\simeq^\alpha=\simeq\restrict B_\alpha$, the restriction of $\simeq$ to $B_\alpha$.

Harrington proved Theorem \ref{t.harrington} by showing that if $E$ is an equivalence relation on a standard Borel space and $\simeq^\alpha\leq_B E$ for all $\alpha<\omega_1$, then $E$ is not Borel. Here we prove the following extension of this:

\begin{theorem}\label{t.ctblto1}
Let $E$ be an equivalence relation on a standard Borel space. If $\simeq^\alpha\leq_B^{\N} E$ for all $\alpha<\omega_1$, then $E$ is not Borel.
\end{theorem}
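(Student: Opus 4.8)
The plan is to follow Harrington's proof of the $\leq_B$-version of this statement — which itself runs through Stern's forcing absoluteness — and to modify the combinatorial core so that, instead of producing a single pair of $\simeq^\alpha$-inequivalent points with $E$-equivalent $f$-images (which already defeats an honest Borel reduction), it produces a \emph{perfect} set of pairwise $\simeq^\alpha$-inequivalent points whose $f$-images all lie in one $E$-class. The latter is exactly what is needed to defeat a Borel countable-to-one reduction, so this is the ``slight strengthening'' of Harrington's theorem that is required.

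Suppose toward a contradiction that $E$ is Borel, on a standard Borel space $Y$, and fix a real $c$ and a countable ordinal $\xi$ with $E$ a $\bSigma^0_\xi(c)$ subset of $Y^2$; the essential point is that $\xi$ and $c$ do not depend on $\alpha$. It therefore suffices to produce one countable ordinal $\alpha$ for which there is no Borel countable-to-one reduction of $\simeq^\alpha$ into $E$. Choose $\alpha$ large relative to $\xi$ and $c$ — above, say, the least ordinal admissible in $c$ that exceeds $\xi$ — fix a countable $\beta$ with $\xi\leq\beta<\alpha$ that is likewise ``past the complexity of $E$'' but well below $\alpha$, and assume for a further contradiction that $f\colon B_\alpha\to Y$ is a Borel map with $x\simeq^\alpha x'\implies f(x)\mathrel E f(x')$ and such that each $f^{-1}([y]_E)$ meets only countably many $\simeq^\alpha$-classes; code $f$, $c$ and a code for $\alpha$ by a single real $d$.

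The heart of the argument is a Stern-style absoluteness computation showing such an $f$ must be $E$-constant on a perfect set of $\simeq^\alpha$-inequivalent structures. Pass to a countable transitive model $M$ of a sufficiently large fragment of $\mathrm{ZFC}$ with $d\in M$ and $\alpha<\omega_1^M$; since wellfoundedness, ordinal ranks, and membership in a Borel set with a wellfounded code are absolute between transitive models containing the relevant reals, $M$ computes $B_\alpha$, $\simeq^\alpha$ and $E$ correctly and sees that $f$ has the two listed properties. Fix in $M$ a structure $x_0\in B_\alpha$ of rank $\alpha$ realizing every rank below $\alpha$, and let $P(x_0)$ be the set of $x\in B_\alpha$ whose restriction to the elements of $\epsilon_x$-rank $<\beta$ is isomorphic to the corresponding restriction of $x_0$: this is a perfect set meeting perfectly many $\simeq^\alpha$-classes, since one may freely glue pairwise non-isomorphic ``tops'' of rank $\alpha$ above the fixed rank-$<\beta$ part. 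Now force over $M$ with a suitable homogeneous forcing adjoining a perfect set of mutually generic members of $P(x_0)$. In $M[G]$: the implication $x\simeq^\alpha x'\implies f(x)\mathrel E f(x')$ persists, being $\bPi^1_1$ in $d$, by Mostowski absoluteness; and — the crux — because $f$ is $\bSigma^0_\xi(d)$-measurable and $\beta\geq\xi$ lies past the ordinals needed to carry out such computations relative to a structure, a $\bSigma^0_\xi(d)$ reading of a member of $P(x_0)$ cannot depend on the generic ``top'' above rank $\beta$, so by homogeneity (swapping the mutually generic tops fixes $d$, hence $f$ and $E$) and transitivity of $E$, the $f$-images of all the adjoined generics lie in a single $E$-class. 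Thus $M[G]$ satisfies ``there is a continuous injection $h\colon 2^\N\to B_\alpha$ with $h(s)\not\simeq^\alpha h(t)$ for $s\neq t$ and $f(h(s))\mathrel E f(h(t))$ for all $s,t$''. This statement is $\bSigma^1_2$ in $d$ — its matrix, with the quantifiers over $s,t$ removed, is Borel in $d$ and a real coding $h$ — so, being witnessed in the transitive model $M[G]$, it holds in $V$ as well (the $\bPi^1_1$ matrix applied to the witness is absolute upward). But a perfect set of pairwise $\simeq^\alpha$-inequivalent reals whose $f$-images lie in one $E$-class exhibits an $E$-class whose $f$-preimage meets uncountably many $\simeq^\alpha$-classes, contradicting the hypothesis on $f$. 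Hence no such $f$ exists for this $\alpha$, and $E$ is not Borel.

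The step I expect to be the main obstacle is the crux above: making rigorous, via Stern's method, that a Borel equivalence relation of fixed complexity $\bSigma^0_\xi(c)$, composed with a Borel function, is forced to be constant on a whole perfect set of generic $\epsilon$-structures that agree below $\beta$ but differ pairwise non-isomorphically above it. This is precisely the content of Harrington's theorem — the impossibility of Borel-classifying wellfounded structures of unbounded rank at any fixed Borel complexity — and the genuinely new point over Harrington's argument is arranging the generic data so as to produce a perfect set of witnesses rather than a single pair, which is what upgrades the conclusion from $\simeq^\alpha\not\leq_B E$ to $\simeq^\alpha\not\leq_B^\N E$. (Alternatively, one can phrase everything in terms of potential Borel complexity: $\leq_B^\N$-reductions into $E$ bound the potential complexity of $\simeq^\alpha$ by a single $\xi'<\omega_1$ — by invariance of potential complexity under Borel reductions together with a Lusin--Novikov selection absorbing the countable fibres — while Stern's method shows $\simeq^\alpha$ is not potentially $\bSigma^0_{\xi'}$ once $\alpha$ is large.)
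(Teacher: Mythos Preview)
Your argument has a genuine gap at the step you yourself flag as the crux, and the paper's proof proceeds along a quite different route.

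\medskip

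\textbf{The gap.} You want to force, over a countable $M$, a perfect family of mutually generic $x_s\in P(x_0)$ that are pairwise $\simeq^\alpha$-inequivalent, and then conclude that nevertheless $f(x_s)\,E\,f(x_t)$ for all $s,t$. Your justification is ``homogeneity (swapping the mutually generic tops fixes $d$, hence $f$ and $E$)'' together with the claim that ``$f$ is $\bSigma^0_\xi(d)$-measurable''. Neither piece delivers what you need. First, swapping the two generic tops by an automorphism $\pi$ of the product forcing only gives $M[G]\models f(x_s)\,E\,f(x_t)\iff f(x_t)\,E\,f(x_s)$, i.e.\ symmetry of $E$, which is vacuous; homogeneity does \emph{not} force a relation between two \emph{different} generic names to hold. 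Second, $\xi$ was fixed as the Borel rank of $E$, not of $f$; the reduction $f$ depends on $\alpha$ and its Borel rank may vastly exceed $\xi$ (and even $\alpha$), so ``a $\bSigma^0_\xi(d)$ reading of $x$ cannot see the top above rank $\beta$'' says nothing about the composite relation $(x,x')\mapsto E(f(x),f(x'))$. Indeed, in the paper's setup the only reason one ever gets a well-defined \emph{virtual $E$-class} is that the two realizations of a single name are $\simeq$-equivalent and $f$ respects $\simeq$; for non-$\simeq$-equivalent generics there is simply no mechanism forcing their $f$-images together. Your parenthetical alternative via potential complexity and Lusin--Novikov is also not justified: it is not clear that a Borel countable-to-one reduction into a $\bPi^0_\gamma$ equivalence relation bounds the potential class of the domain relation by anything depending only on $\gamma$ (this would need a \emph{uniform} selector over all $E$-classes, which is exactly what the non-uniform hypothesis fails to provide).

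\medskip

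\textbf{What the paper does instead.} The paper runs a \emph{class-counting} argument in the $\beth$-hierarchy rather than a direct perfect-set construction. One fixes $\gamma$ with $E\in\bPi^0_\gamma$, picks $\alpha>\gamma$ with $|V_\alpha|=\beth_\alpha$, and lets $\P=\Coll(\omega,\beth_\alpha)$. For each subset $X_\beta\subseteq V_\alpha$ (there are $\beth_{\alpha+1}$ of them) one takes the canonical $\P$-name $\tau_\beta$ for a real coding $(V_\alpha\cup\{X_\beta\},\in)$; crucially, $\tau_\beta[G_l]\simeq\tau_\beta[G_r]$ is forced, so $\sigma_\beta=$``$f(\tau_\beta)$'' determines a genuine \emph{virtual $E$-class}. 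The one new step over Harrington is to use absoluteness of ``$f$ is a witness to $\leq_B^{\N}$'' (which the paper checks is $\Pi^1_1$ in the codes, via the effective-reductions property of $=_\N$) to argue that in the $\P\times\P$-extension each virtual $E$-class can absorb at most $\beth_\alpha$ of the $\tau_\beta$'s; hence there remain $\beth_{\alpha+1}$ pairwise distinct virtual $\bPi^0_\gamma$ sets. Stern's Lemma then realizes all of these by actual $\bPi^0_\gamma$ codes in any model $N$ with $\beth_\gamma^M<\omega_1^N\leq\beth_\alpha^M$, yielding more than $(2^{\aleph_0})^N$ distinct $\bPi^0_\gamma$ sets in $N$, a contradiction. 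The point is that Stern's method is applied to the \emph{$E$-class} (whose rank $\gamma$ is fixed independently of $\alpha$ and $f$), never to $f$ itself; this is exactly what your approach loses by trying to read the $E$-class of $f(x)$ off a bounded-rank fragment of $x$.
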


\subsection{Stern's absoluteness} We now briefly describe the key elements of Stern's forcing absoluteness. We only give enough detail that we can state the main lemma that we need. The reader should consult \cite{stern84} or \cite{hjorth98} for the proof of this lemma; for forcing basics we refer to \cite{kunen80}.

We will use the notations related to trees on $\N$ introduced in \S2.4. Per the usual conventions, for $s\in\N^{<\N}$ we let
$$
N_s=\{t\in\N^\N: t\supset s\},
$$
be the \emph{basic open neighborhood in $\N^\N$ determined by $s$}.

\begin{definition}[See \cite{hjorth98} Definition 1.1 and Definition 1.2]\ 

1. A \emph{Borel code} is a pair $(T,f)$ consisting of a wellfounded tree $T\subseteq \N^{<\N}$ and a function 
$$
f:\{t\in T: t\text{ is terminal}\}\to \N^{<\N}.
$$
Fix a Borel code $(T,f)$. We define for all $s\in T$ the Borel sets $B(s,T,f)$ by backwards recursion as follows: If $s\in T$ is a terminal note then we let $B(s,T,f)=N_{f(s)}$. If $B(t,T,f)$ has been defined for all $s\subset t\in T$ then we let
$$
B(s,T,f)=\bigcap\{\N^\N\setminus B(s^\frown n,T,f):s^\frown n\in T\}.
$$
The \emph{Borel set coded by $(T,f)$} is the set $B(\emptyset, T,f)$. The \emph{rank} of the Borel code $(T,f)$ is the rank of $\emptyset\in T$ (in the sense of \cite[2.E]{kechris95}.) One can easily prove that if $(T,f)$ has rank $\gamma<\omega_1$ then $B(\emptyset, T,f)$ is a $\mathbf{\Pi}^0_\gamma$-set, and that any $\mathbf{\Pi}^0_\gamma$-set in $\N^\N$ has a code of rank $\gamma$.

2. A \emph{virtual Borel set} is a triple $(\P,p,\tau)$ consisting of a poset $\P$ (in the sense of \cite{kunen80}), a condition $p\in\P$, and a $\P$-name $\tau$ such that
$$
p\forces_{\P} \text{``$\tau$ is a Borel code''}.
$$
We say that $(\P,p,\tau)$ is a \emph{virtual Borel set of rank $\gamma$}, for some $\gamma\in\mathbb{ON}$, if
$$
p\forces_{\P}\text{``$\tau$ is a Borel code of rank $\gamma$''}.
$$
Note that $\gamma$ may not be a countable ordinal in the ground model.

3. If $\P$ is a poset and $G$ is a filter on $\P\times\P$ then we let
$$
G_r=\{p\in\P: (\exists q\in\P) (p,q)\in G\}
$$
and
$$
G_l=\{q\in\P: (\exists p\in\P) (p,q)\in G\}.
$$

4. The cardinals $\beth_\alpha$ for $\alpha\in\mathbb{ON}$ are defined as follows: We let $\beth_0=\aleph_0$, $\beth_{\alpha+1}=2^{\beth_{\alpha}}$ and for $\alpha$ a limit we let $\beth_\alpha=\sup_{\beta<\alpha}\beth_\beta$. Since the definition of the function $\alpha\mapsto \beth_\alpha$ depends on the model of set theory $M$ in which we work, we indicate this by writing $\beth_\alpha^M$
\end{definition}

With these definitions we can state the key result of Stern that we need (\cite{stern84}, \cite[Corollary 1.8]{hjorth98}):

\begin{lemma}[Stern]\label{l.stern}
Let $M$ and $N$ be wellfounded models of ZFC, and let $(\P,p,\tau)\in M$ be a virtual Borel set of rank $\gamma$. Suppose that $\beth^M_{(-1+\gamma)}<\omega_1^N$ and that
$$
(p,p)\forces_{\P\times\P} \text{``$\tau[\dot G_l]$ codes the same Borel set as $\tau[\dot G_r]$''},
$$
where $\dot G$ is the canonical name for the generic. Then there is a Borel code $(T,f)\in N$ of rank $\gamma$ such that
$$
p\forces_{\P} \text{``$B(\emptyset,\check T,\check f)$ equals the Borel set coded by $\tau[\dot G]$''}.
$$
\end{lemma}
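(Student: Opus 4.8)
Since Lemma~\ref{l.stern} is due to Stern \cite{stern84} (see also \cite[Corollary~1.8]{hjorth98}), what follows is a sketch of the \emph{shape} of the argument rather than a full reproof. The plan is to argue by transfinite induction on the rank $\gamma$ of the virtual Borel set $(\P,p,\tau)$, establishing the conclusion simultaneously for all pairs of wellfounded models $M,N$ satisfying the hypotheses. The one device used at every rank is a \emph{pinning-down} principle extracted from the hypothesis on $\P\times\P$: if $x\in(\N^\N)^M$ and $B$ denotes the Borel set coded by $\tau[\dot G]$, then ``$\check x\in B$'' must be decided by $p$, for otherwise conditions $q_0,q_1\leq p$ forcing opposite answers could be placed into the two coordinates of a $(\P\times\P)$-generic $G_l\times G_r$, and then $x$ — which lies in both $M[G_l]$ and $M[G_r]$ — would witness that $\tau[G_l]$ and $\tau[G_r]$ code different sets, contradicting $(p,p)\forces_{\P\times\P}$ ``$\tau[\dot G_l]$ codes the same Borel set as $\tau[\dot G_r]$''. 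Hence $\tilde B=\{x\in(\N^\N)^M:p\forces_\P\check x\in B\}$ is well defined and equals the trace $B[G]\cap(\N^\N)^M$ for every $\P$-generic $G\ni p$ over $M$. The remaining job is to manufacture, inside $N$, a genuine Borel code $(T,f)$ of rank $\gamma$ whose Borel set has this same trace on the reals of $M$; being of rank $\leq\gamma$, such a code is then automatically forced by $p$ to code the same Borel set as $\tau[\dot G]$ in every extension.

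First I would dispose of the base case: when $\gamma$ is minimal, $p$ forces $\tau$ to code a basic open set $N_{\dot s}$, the pinning-down principle applied to the name $\dot s$ for the defining sequence yields a single hereditarily finite $s_0\in\N^{<\N}$ with $p\forces_\P\dot s=\check s_0$, and $(\{\emptyset\},\,\emptyset\mapsto s_0)\in N$ is the desired code. For the inductive step I would peel off the outermost level of the wellfounded tree named by $\tau$: $p$ forces that the root has a certain set of immediate successors, that the labelled subtree below the $n$-th one is a Borel code of some rank $\gamma_n<\gamma$, and that $\gamma=\sup_n(\gamma_n+1)$. One then wants to treat each such sublevel as a virtual Borel set of smaller rank and apply the induction hypothesis. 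The obstruction to doing this naively is that the decomposition of a Borel set into a code is not canonical, so the $\P\times\P$-invariance of the whole set does not literally pass to the pieces; the fix is to pass first to the canonical trace $\tilde B$ and re-extract a code from that — making the level decomposition canonical — thereby handing down the needed invariance to each sublevel. The induction hypothesis supplies an $N$-code for each sublevel, and a recursion of length $\gamma$ reassembles these into a single rank-$\gamma$ code in $N$. This last recursion, together with all the enumerations of reals and subcodes it generates, is an object whose hereditary size is controlled by $\beth^M_{(-1+\gamma)}$, and the hypothesis $\beth^M_{(-1+\gamma)}<\omega_1^N$ is exactly what guarantees that it replays in $N$ as a legitimate construction — in particular the tree $T$ really is wellfounded from the standpoint of $N$ — so that $(T,f)\in N$ as required.

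The hard part, and the reason this is subtle rather than routine, is precisely the non-canonicity of Borel codes combined with the fact that the conclusion compares Borel \emph{sets}, not codes: one cannot recurse on the syntax of $\tau$, and the argument that the constructed code and $\tau[\dot G]$ agree on \emph{all} reals of an extension (not merely those already in $M$) must be threaded through the rank induction and through Mostowski absoluteness of $\mathbf{\Pi}^1_1$ statements between $M$ and its forcing extensions. Pinning down the exact cardinal arithmetic that produces the exponent $-1+\gamma$ in the $\beth$-hypothesis is the remaining bookkeeping; for the complete argument I would defer to \cite{stern84} and \cite[\S1]{hjorth98}.
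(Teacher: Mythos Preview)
The paper does not actually prove Lemma~\ref{l.stern}; it merely states the result and refers the reader to \cite{stern84} and \cite[Corollary~1.8]{hjorth98} for the proof. So there is no paper-proof to compare your proposal against, and in that sense your opening sentence already matches what the paper does: cite Stern and Hjorth and move on.

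Your sketch of the argument is a reasonable outline of the standard approach (pinning down via the product-forcing hypothesis, induction on rank, cardinal bookkeeping via the $\beth$-hypothesis), and it is appropriate that you flag the non-canonicity of Borel codes as the main subtlety. Since the paper treats this lemma as a black box, any sketch you provide goes beyond what the paper itself contains; if your intent is to match the paper, you can simply cite the references and omit the sketch entirely.
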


\subsection{The proofs of Theorem \ref{t.tfa} and \ref{t.ctblto1}.}

We will work exclusively with spaces which are finite or countable products of the spaces $2$, $\N$, $2^\N$ and $\N^\N$. We call such spaces \emph{products spaces}, and denote them by script letters $\psX$, $\psY$, $\psZ$, etc.

The proof of Theorem \ref{t.ctblto1} is a class-counting argument that uses Lemma \ref{l.stern}. However, we first need to prove that the statement ``$E\leq_B^{=_\N} F$'' is absolute for Borel equivalence relations. The next definition allow us to prove absoluteness of such statements in a slightly more general setting.

\begin{definition}
Let $F$ be a Borel equivalence relation. We will say that $F$ has the \emph{effective reductions property} (relative to $a\in\N^\N$) if whenever $E$ is a $\Delta^1_1(b)$ equivalence relation and $E\leq_B F$ then in fact $E\leq_{\Delta^1_1(a,b)} F$, i.e., there is a $\Delta^1_1(a,b)$ function $f:\N^\N\to\N^\N$ which reduces $E$ to $F$.
\end{definition}

By \cite[Proposition 2 and Theorem 15]{fofrto10}, two examples of equivalence relations with the effective reductions property includes equality on $\N$ and equality on $\N^\N$.

For a $\Delta^1_1(a)$-equivalence relation $R$ on $\psX$, let $R^+$ denote the equivalence relation on $2\times\psX$ defined by
$$
(m,x)R^+ (n,y)\iff (m=n=0)\vee (m=n=1\wedge xRy)
$$
which is also $\Delta^1_1(a)$.

\begin{lemma}\label{l.effred}
Let $E$, $F$ and $R$ be Borel equivalence relations on product spaces $\psX$, $\psY$ and $\psZ$, respectively. Suppose that $R$ is $\Delta^1_1(a_0)$ and has the effective reductions property relative to $a_0$, and that $R^+\leq_B R$. Then the statement ``$f$ is a witness to $E\leq_B^R F$'' is $\Pi^1_1(a_0)$ uniformly in the codes for $f$, $E$ and $F$ and therefore absolute. Whence, the statement ``$E\leq_B^R F$'' is $\Sigma^1_2(a_0)$ uniformly in the codes for $E$ and $F$ and is absolute.
\end{lemma}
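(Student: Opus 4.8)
The plan is to unpack ``$f$ is a witness to $E\leq_B^R F$'' into three clauses and bound the complexity of each, relative to the codes $c_f,c_E,c_F$ for $f$, $E$, $F$. The clauses are: (a) $c_f$ codes a total Borel function $f\colon\psX\to\psY$; (b) $(\forall x_1,x_2\in\psX)\,(x_1 E x_2\Rightarrow f(x_1) F f(x_2))$; and (c) for every $y\in\psY$, writing $D_y=f^{-1}([y]_F)$, the restriction $E\restriction D_y$ is Borel reducible to $R$. Clause (c) is exactly the reformulation of ``for each $F$-class there is a Borel $g_{[y]_F}\colon\psX\to\psZ$ with $x_1 E x_2\iff g(x_1) R g(x_2)$ on $f^{-1}([y]_F)$'', since a Borel function on a Borel set extends to $\psX$ arbitrarily and, conversely, restricts. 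Clauses (a) and (b) are routinely $\Pi^1_1$ uniformly in $c_f,c_E,c_F$. The difficulty is clause (c): in its naive form $\forall y\,\exists(\text{Borel }g)\dots$ it is only $\Pi^1_2$, which would make ``$E\leq_B^R F$'' merely $\Sigma^1_3$ and useless for absoluteness, and the effective reductions property is precisely what is needed to do better.

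Fix $y$ and collapse the complement of $D_y$ to a point: let $x_1\,E_y^\sharp\,x_2$ hold iff either $x_1,x_2\notin D_y$, or $x_1,x_2\in D_y$ and $x_1 E x_2$. This is an equivalence relation on all of $\psX$, and since $D_y$ is $\Delta^1_1$ in $(c_f,c_F,y)$ while $E$ is $\Delta^1_1(c_E)$, it is $\Delta^1_1(b_y)$ with $b_y$ a fixed recursive join of $(c_E,c_f,c_F,y)$, uniformly in $y$. I claim
$$
E\restriction D_y\leq_B R\iff E_y^\sharp\leq_B R.
$$
The implication ``$\Leftarrow$'' is just restriction of a reduction to $D_y$, on which $E_y^\sharp$ agrees with $E$. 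For ``$\Rightarrow$'', given a Borel reduction $g\colon D_y\to\psZ$ of $E\restriction D_y$ to $R$, fix $z_0\in\psZ$ and define $h\colon\psX\to 2\times\psZ$ by $h(x)=(1,g(x))$ for $x\in D_y$ and $h(x)=(0,z_0)$ otherwise; then $h$ is a Borel reduction of $E_y^\sharp$ to $R^+$, and composing with a Borel reduction $R^+\leq_B R$ (which exists by hypothesis) gives $E_y^\sharp\leq_B R$.

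Now apply the effective reductions property: $E_y^\sharp\leq_B R$ implies $E_y^\sharp\leq_{\Delta^1_1(a_0,b_y)} R$. The point is that this passage is \emph{uniform}, and here I would appeal to the effective content of the proofs of \cite[Proposition 2 and Theorem 15]{fofrto10}: there is a recursive map $b\mapsto e_b$ so that $\{e_b\}^{a_0,b}$ is a $\Delta^1_1(a_0,b)$ partial function which is total and reduces the equivalence relation coded by $b$ to $R$ \emph{whenever} some $\Delta^1_1(a_0,b)$ function does. Granting this, ``$E\restriction D_y\leq_B R$'' is equivalent to ``$\{e_{b_y}\}^{a_0,b_y}$ is total and reduces $E_y^\sharp$ to $R$'', which is $\Pi^1_1(a_0,b_y)$ (totality of a $\Delta^1_1$ function is $\Pi^1_1$, and the reduction property of a total $\Delta^1_1$ function is a universal real quantifier over a $\Delta^1_1$ matrix, hence $\Pi^1_1$), and this is uniform in $y$. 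Therefore clause (c), being $\forall y$ of a $\Pi^1_1(a_0,b_y)$ statement, is $\Pi^1_1(a_0,c_f,c_E,c_F)$, and so is (a)$\wedge$(b)$\wedge$(c). Thus ``$f$ is a witness to $E\leq_B^R F$'' is $\Pi^1_1(a_0)$ uniformly in the codes, hence absolute by Mostowski absoluteness. Finally, ``$E\leq_B^R F$'' is ``$\exists c\,[\,c$ codes a total Borel function $\psX\to\psY$ and $c$ witnesses $E\leq_B^R F\,]$'', an existential real quantifier over a $\Pi^1_1(a_0)$ matrix, hence $\Sigma^1_2(a_0)$ uniformly in the codes for $E$ and $F$, and absolute by Shoenfield's absoluteness theorem.

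The crux, and the step I expect to require the most care, is the uniformity invoked above: ``$E_y^\sharp\leq_{\Delta^1_1(a_0,b_y)} R$'' read literally is an existential over $\Delta^1_1$ indices and hence only $\Sigma^1_2$, so bringing clause (c) down to $\Pi^1_1$ depends on extracting from \cite{fofrto10} a single recursive procedure producing a reduction that provably succeeds whenever one exists; for $R$ equality on $\N$ (the case used in Theorem \ref{t.ctblto1}) this should come out of the effective Silver/Burgess-type arguments behind \cite[Proposition 2, Theorem 15]{fofrto10}. The remaining points — that $E_y^\sharp$ is genuinely an equivalence relation and $\Delta^1_1$ uniformly in $y$, that $R^+\leq_B R$ is used exactly once (in the ``$\Rightarrow$'' direction of the displayed equivalence), and that clauses (a) and (b) are $\Pi^1_1$ — are routine.
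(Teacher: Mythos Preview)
Your overall architecture matches the paper's: you reformulate the per-fibre condition via an auxiliary equivalence relation on all of $\psX$ (your $E_y^\sharp$ is exactly the paper's $E_y$), you use $R^+\leq_B R$ to pass from ``$E\restriction D_y\leq_B R$'' to ``$E_y^\sharp\leq_B R$'', and you invoke the effective reductions property to descend to a $\Delta^1_1(a_0,b_y)$ reduction. The construction and the role of each hypothesis are correct, and you make explicit the equivalence that the paper leaves implicit.

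The gap is in your final complexity analysis, and it leads you to demand more than is needed. You claim that ``$E_y^\sharp\leq_{\Delta^1_1(a_0,b_y)} R$'' is on its face $\Sigma^1_2$, so that one must extract from \cite{fofrto10} a \emph{uniform} recursive selector $b\mapsto e_b$ to bring clause (c) down to $\Pi^1_1$. No such uniformity is needed, and the paper does not invoke it. The point you are missing is that ``there exists a total $\Delta^1_1(a)$ function reducing $E_y^\sharp$ to $R$'' can be written with a \emph{number} quantifier, not a real quantifier: there is a $\Pi^1_1$ set $P\subseteq\N^\N\times\N\times\psX\times\psZ$ and a $\Pi^1_1$ set $C\subseteq\N^\N\times\N$ such that, for each fixed real $a$, the sections $P_{(a,n)}$ with $(a,n)\in C$ are exactly the graphs of the total $\Delta^1_1(a)$ functions $\psX\to\psZ$ (this is the parametrization the paper cites from \cite[2.8.2]{kanovei08}). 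Taking $a$ to be the recursive join of $a_0$ and $b_y$, the statement becomes
\[
(\exists n\in\N)\bigl[(a,n)\in C\ \wedge\ \text{``}g_{(a,n)}\text{ reduces }E_y^\sharp\text{ to }R\text{''}\bigr],
\]
and both conjuncts are $\Pi^1_1$ uniformly in $n$, $y$ and the codes. Since a number existential over a $\Pi^1_1$ matrix is still $\Pi^1_1$, clause (c) is already $\Pi^1_1(a_0)$ uniformly, with nothing required from \cite{fofrto10} beyond the bare effective reductions property. Once you replace your uniformity appeal by this parametrization argument, your proof becomes the paper's proof.
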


\begin{proof}
Let $P\subseteq\N^\N\times\N\times \psX\times\psZ$ and $C\subseteq\N^\N$ be $\Pi^1_1$ such that for each $a\in\N^\N$ the family of sets
$$
P_{(a,n)}=\{(x,z)\in\psX\times\psZ: (a,n,x,z)\in P\},
$$
which is indexed by those $n\in\N$ such that $(a,n)\in C$, parametrizes the (graphs of) total $\Delta^1_1(a)$ functions. (The existence of such a set follows from \cite[2.8.2]{kanovei08}.)  Let us write $g_{(a,n)}$ for the function with graph $P_{(a,n)}$.

Fix a $\Delta^1_1(c)$ function $f$ witnessing that $E\leq_B^R F$. For $y\in\psY$, let $E_y$ be the equivalence relations defined on $\psX$ by
$$
x E_y x'\iff (f(x)Ff(x')Fy\wedge xEx')\vee (\neg (f(x)Fy)\wedge \neg(f(x')Fy))
$$
Note that $E_y$ is $\Delta^1_1$ uniformly in $y$ and the codes for $f$, $E$ and $F$, and $E_y$ depends only on $[y]_F$. Since $R$ has the effective reductions property we must have $R^+\leq_{\Delta^1_1(a_0)} R$, and saying that $f:\psX\to\psY$ is a witness to $E\leq_B^R F$ can then be expressed as:
\begin{align*}
&(\forall x,x') (xEx'\implies f(x) F f(x'))\wedge\\ 
&(\forall y\in\psY)(\exists (a,n)\in\Delta^1_1(y,a_0,b)) \text{``$g_{(a,n)}$ is a Borel reduction of $E_y$ to $R$''}.
\end{align*}
Since saying that $g_{(a,n)}$ is a Borel reduction of $E_y$ to $R$ is $\Pi^1_1$ in $(a,n)$ and the codes for $E_y$ and $R$, it follows that the statement ``$f$ is a witness to $E\leq_B^R F$'' is $\Pi^1_1(a_0)$ uniformly in the codes for $f$, $E$, $F$, for $R$ fixed as above. Thus the statement $E\leq_B^R F$ is $\Sigma^1_2(a_0)$ in the codes for $E$ and $F$.
\end{proof}

We note that taking $R$ to be $=^\N$ or $=^{\N^\N}$, the hypothesis of Lemma \ref{l.effred} is satisfied.

\begin{proof}[Proof of Theorem \ref{t.ctblto1}]
Suppose that $F$ is a $\mathbf{\Pi}^0_\gamma$ equivalence relation such that $\simeq^\alpha\leq_B^{=^\N} F$ for all $\alpha<\omega_1$. Choose $\gamma<\alpha<\omega_1$ such that $|V_\alpha|=\beth_\alpha$ (for instance, choose $\alpha$ such that $\omega+\alpha=\alpha$.) Let $\P=\Coll(\omega,\beth_\alpha)$. Let $(X_\beta)_{\beta<\beth_{\alpha+1}}$ enumerate $V_\alpha$, and let $\tau_\beta$ be the canonical name for a real coding the structure $(V_\alpha\cup\{X_\beta\},\in)$. Then we have for all $\beta,\delta<\beth_{\alpha+1}$ that
\begin{enumerate}
\item $\forces_\P \tau_\beta[\dot G]\in \check B_\alpha$
\item $\forces_{\P\times\P} \tau_\beta[\dot{G}_l] \simeq^{\alpha+2}\tau_\beta[\dot{G}_r]$
\item if $\beta\neq\delta$ then $\forces_{\P\times\P} \tau_\beta[\dot G_l] \not\simeq^{\alpha+2}\tau_\delta[\dot G_r]$
\end{enumerate}
Let $f$ be a Borel function witnessing that $\simeq^\alpha\leq_B^{=^\N} F$. For each $\beta<\beth_{\alpha+1}$, let $\sigma_\beta$ be a name for a real such that $\forces_\P f(\tau_\beta[\dot G])=\sigma_\beta[\dot G]$. Then by (2) above we have that $\forces_{\P\times\P}\sigma_\beta[\dot G_l] \check F\sigma_\beta[\dot G_r]$. Notice now that since $f$ witnesses that $\simeq^{\alpha}\leq_B^{=^\N} F$, it holds for each $\beta<\beth_{\alpha+1}$ that the set
$$
\{\delta<\beth_{\alpha+1} :\ \forces_{\P\times\P} \sigma_\beta[\dot G_l] \check F\sigma_\delta[\dot G_r]\}
$$
has size at most $\beth_\alpha$ (in the ground model.) Thus we can find distinct ordinals $\beta_\lambda<\beth_{\alpha+1}$, for $\lambda<\beth_{\alpha+1}$, such that if $\lambda_0\neq\lambda_1$ then
$$
\forces_{\P\times\P} \sigma_{\beta_{\lambda_0}}[\dot G_l] \not\!\!\check F\sigma_{\beta_{\lambda_1}}[\dot G_r].
$$
For each $\lambda<\beth_{\alpha+1}$, let $\sigma^*_\lambda$ be a $\P$-name for a Borel code for the $F$-equivalence class of $\sigma_{\beta_\lambda}$. Since $F$ is $\mathbf{\Pi}^0_\gamma$, the family $(\sigma^*_\lambda)_{\lambda<\beth_{\alpha+1}}$ is an enumeration of $\beth_{\alpha+1}$ names for $\mathbf{\Pi}^0_\gamma$ sets. It now follows from Stern's Lemma (\ref{l.stern}) that in a model $N$ in which $\beth_\gamma^M<\aleph_1^N$ and $\beth_{\alpha}^M\geq\aleph_1^N$ (which can be obtained by forcing with $\Coll(\omega,\beth_{\gamma})$, say) that there are at least $\beth_2^N=2^{2^{\aleph_0}}$ codes for distinct $\mathbf{\Pi}^0_\gamma$ sets, a contradiction.
\end{proof}

\begin{proof}[Proof of Theorem \ref{t.tfa}]
It was proven in \cite{hjorth02} that $\simeq^\alpha\leq_B\simeq^{\bf TFA}$ for all $\alpha<\omega_1$. Thus the statement follows from Theorem \ref{t.ctblto1}.
\end{proof}

\section{Conjugacy, orbit equivalence and von Neumann equivalence are not Borel}

In this section we prove Theorems \ref{t.mainthm1}, \ref{t.mainthm2}, \ref{t.mainthm1v2} and \ref{t.mainthm2v2}. Recall that ${\bf ABEL}$ denotes the set of countably infinite Abelian groups with underlying set $\N$, ${\bf TFA}\subseteq{\bf ABEL}$ the set of torsion-free abelian groups. For $A\in{\bf ABEL}$, let $\hat A$ be the character group (which is then a compact group that we equip with the Haar measure.) Fix an action $\sigma_0:\Gamma\actson\N$ and, as in \S 4, let $\sigma:\Gamma\actson A^\N$ be the generalized Bernoulli shift and $\sigma_{\hat A}$ the quotient of $\sigma$ by $\hat A$. We will need the following Lemma.

\begin{lemma}\label{l.Borel}
Let $(X,\mu)$ and $(Y,\nu)$ be a standard Borel probability spaces, and let $\Gamma$ be a countably infinite discrete group. 

(1) For any action $\sigma_0:\Gamma\actson\N$, there is a Borel function $f:{\bf ABEL}\to\act(\Gamma,X,\mu)$ such that $f(A)$ is conjugate to $\sigma_{\hat A}$, where $\sigma_{\hat A}$ is the quotient  by $\hat A$ of the generalized Bernoulli shift on $\hat A^\N$ induced by $\sigma_0$.

(2) There is a Borel function $g:\act(\Gamma,X,\mu)\times\act(\Gamma,Y,\nu)\to\act(X,\mu)$ such that $g(\rho_0,\rho_1)$ is conjugate to the action $\rho_0\times\rho_1:\Gamma\actson X\times X$. 
\end{lemma}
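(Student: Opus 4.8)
The plan is to regard both parts as instances of a single ``Borel parametrization'' phenomenon: each of the operations used in \S4 — forming the Pontryagin dual $\hat A$, its normalized Haar measure, the associated generalized Bernoulli shift, and the quotient by the $\hat A$-action — can be carried out Borel-uniformly in its parameters, and any Borel family of p.m.p.\ $\Gamma$-actions on (non-atomic) standard probability spaces can then be transported to a Borel family of conjugate actions on the \emph{fixed} space $(X,\mu)$. The transport is a parametrized measurable isomorphism theorem: if $a\mapsto(Z_a,\eta_a)$ is a Borel assignment of non-atomic standard probability spaces (each realized as a Borel subset of a fixed Polish space, equipped with a Borel family of probability measures) and $a\mapsto\theta_a$ is a Borel family of p.m.p.\ $\Gamma$-actions on them, then there is a Borel family $a\mapsto\psi_a$ of measure isomorphisms $\psi_a\colon(X,\mu)\to(Z_a,\eta_a)$, and $a\mapsto\bigl(\gamma\mapsto\psi_a^{-1}\theta_a(\gamma)\psi_a\bigr)$ is the sought Borel map into $\act(\Gamma,X,\mu)$; this is a routine parametrized version of the isomorphism theorem for standard probability spaces, cf.\ \cite[17.41]{kechris95}. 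Granting it, it remains only to exhibit the relevant objects as Borel families.

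Part (2) needs no quotients. We fix once and for all a measure isomorphism $\phi\colon(X\times Y,\mu\times\nu)\to(X,\mu)$ and set
$$
g(\rho_0,\rho_1)(\gamma)=\phi\circ\bigl(\rho_0(\gamma)\times\rho_1(\gamma)\bigr)\circ\phi^{-1}.
$$
Because $(S,T)\mapsto S\times T$ respects composition and $U\mapsto\phi U\phi^{-1}$ is an isomorphism of the Polish groups $\Aut(X\times Y,\mu\times\nu)$ and $\Aut(X,\mu)$, this defines a p.m.p.\ action, conjugate via $\phi$ to $\rho_0\times\rho_1$. Borelness follows from continuity: as $\act(\Gamma,X,\mu)$ carries the product topology it suffices to fix $\gamma$, and under the Koopman embeddings $(S,T)\mapsto S\times T$ becomes the tensor product $(U,V)\mapsto U\otimes V$ of unitaries, which is strongly continuous (it is isometric on elementary tensors, these are total, and the operators have norm $1$); composing with the homeomorphism $U\mapsto\phi U\phi^{-1}$ preserves continuity.

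For (1) we spell out the ingredients. First realize the dual concretely inside the fixed compact group $\T^{\N}$: for $A=(c,i,e)\in{\bf ABEL}$ put $\hat A=\{\chi\in\T^{\N}:\chi(e)=1\text{ and }\chi(c(m,n))=\chi(m)\chi(n)\text{ for all }m,n\}$, a closed subgroup (which is infinite, since $A$ is). A routine check — using that $\{(A,\chi):\chi\in\hat A\}$ is Borel with compact sections — shows that $A\mapsto\hat A$ is Borel into the hyperspace $\mathcal K(\T^{\N})$; the assignment $K\mapsto\lambda_K$ of normalized Haar measure on closed subgroups of $\T^{\N}$ is Borel (its graph $\{(K,\lambda):\lambda(K)=1\text{ and }k_{*}\lambda=\lambda\text{ for all }k\in K\}$ is Borel with singleton sections, by uniqueness of Haar measure), whence $A\mapsto\lambda_{\hat A}^{\otimes\N}\in P\bigl((\T^{\N})^{\N}\bigr)$ is Borel. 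The generalized Bernoulli shift $\sigma$ and the rotation action of $\hat A$ are restrictions of the \emph{$A$-independent} transformations $x\mapsto\bigl(x(\sigma_0(\gamma^{-1})n)\bigr)_{n}$ and $x\mapsto\bigl(a\,x(n)\bigr)_{n}$ of $(\T^{\N})^{\N}$, so only the invariant subset $\hat A^{\N}$ and the measure depend on $A$. Finally, with $\N_{+}=\N\setminus\{0\}$, the quotient $\sigma_{\hat A}$ on $\hat A^{\N}/\hat A$ is realized on the fixed space $\hat A^{\N_{+}}\subseteq(\T^{\N})^{\N_{+}}$ via the $\hat A$-invariant Borel map $x\mapsto\bigl(x(0)^{-1}x(n)\bigr)_{n\in\N_{+}}$, which descends to a measure isomorphism $\hat A^{\N}/\hat A\to\hat A^{\N_{+}}$ taking $\lambda_{\hat A}^{\otimes\N}$ to $\lambda_{\hat A}^{\otimes\N_{+}}$ and carrying $\sigma_{\hat A}$ to an action given by an explicit Borel formula in $(A,\gamma,\,\cdot\,)$. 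Thus $A\mapsto\bigl(\hat A^{\N_{+}},\lambda_{\hat A}^{\otimes\N_{+}},\text{the induced }\Gamma\text{-action}\bigr)$ is a Borel family of (non-atomic) p.m.p.\ $\Gamma$-actions, and the transport principle produces the required Borel $f\colon{\bf ABEL}\to\act(\Gamma,X,\mu)$ with $f(A)$ conjugate to $\sigma_{\hat A}$.

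The step I expect to be the genuine obstacle is the transport principle — the parametrized measurable isomorphism theorem — which has to be set up carefully so that the Borel family $a\mapsto\psi_a$ really exists; the cleanest route is to first Borel-reduce each $(Z_a,\eta_a)$ to $([0,1],\mathrm{Leb})$ by a parametrized version of the cumulative-distribution-function construction. A secondary point deserving attention is the Borelness of $A\mapsto\lambda_{\hat A}$, i.e.\ verifying that the graph displayed above is Borel in $\mathcal K(\T^{\N})\times P(\T^{\N})$.
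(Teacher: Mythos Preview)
Your proposal is correct. The paper itself does not give a proof: for (1) it simply cites \cite[Theorem 2 (v.2), pp.~343--344]{tornquist11}, and for (2) it writes ``Routine.'' Your argument supplies exactly the details one would expect behind these deferrals --- a concrete realization of $\hat A$ inside $\T^\N$, Borelness of $A\mapsto\hat A$ and of Haar measure on closed subgroups, an explicit coordinate model $\hat A^{\N_+}$ for the quotient $\hat A^\N/\hat A$ via $x\mapsto(x(0)^{-1}x(n))_{n\geq 1}$, and a parametrized isomorphism theorem to transport everything to the fixed space $(X,\mu)$ --- and this is the natural line of argument (and essentially the one in the cited reference). The two points you flag as needing care, namely the parametrized measure-isomorphism step and the Borelness of $K\mapsto\lambda_K$, are genuine but standard, and your suggested treatments (CDF construction for the former, Borel-graph-with-unique-sections for the latter) are the right ones.
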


\begin{proof}
(1) The proof of \cite[Theorem 2 (v.2)]{tornquist11} given on pp. 343--344 in \cite{tornquist11} proves exactly this.

(2) Routine.
\end{proof}

It is practical to introduce the following notion:

\begin{definition}
We will say that $\Delta\lhd\Lambda\leq\Gamma$ is a $\{\T\}$-\emph{pseudorigid triple} if $\Gamma$ is a countable discrete group with subgroups $\Delta$ and $\Lambda$ such that $\Lambda/\Delta$ has strongly $\{\T\}$-cocycle superrigid weakly mixing malleable actions.
\end{definition}

\subsection{Conjugacy.} To sum up, proving Theorems \ref{t.mainthm1} and \ref{t.mainthm1v2} amounts to proving the following:

\begin{theorem}\label{t.conjanal}
Let $\Delta\lhd\Lambda\leq\Gamma$ be a $\{\T\}$-pseudorigid triple. Suppose $\Gamma$ admits an action $\sigma_0:\Gamma\actson\N$ such that $\sigma_0\restrict\Lambda$ is $\Delta$-suitable. Then $\simeq^{\bf TFA}\leq_B\conj^{\actfwm(\Gamma,X,\mu)}$ and $\conj^{{\actfwm(\Gamma,X,\mu)}}$ is complete analytic.
\end{theorem}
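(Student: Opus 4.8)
The plan is as follows. Analyticity of $\conj^{\actfwm(\Gamma,X,\mu)}$ is immediate from the definition of $\conj$ — this is the easy half of the Lemma in \S2 — so the real content is the Borel reduction $\simeq^{\bf TFA}\leq_B\conj^{\actfwm(\Gamma,X,\mu)}$, after which $\Sigma^1_1$-completeness follows formally, using that $\simeq^{\bf TFA}$ is itself a complete analytic subset of ${\bf TFA}^2$. For the reduction, the idea is to upgrade the family of Corollary \ref{t.pseudofamily} to a \emph{Borel} map on ${\bf TFA}$, the Borelness being furnished by Lemma \ref{l.Borel}.

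Concretely, I would first fix once and for all an a.e. free, weakly mixing action $\rho:\Gamma\actson(Y,\nu)$ with $\rho\restrict\Delta$ ergodic — a Bernoulli shift of $\Gamma$ works, since $\Delta$ is infinite in the present setting — and realize it as a point $\rho_0\in\act(\Gamma,Y,\nu)$. By Lemma \ref{l.Borel}(1) there is a Borel $f:{\bf ABEL}\to\act(\Gamma,X,\mu)$ with $f(A)\conj\sigma_{\hat A}$, where $\sigma_{\hat A}$ is the quotient by $\hat A$ of the generalized Bernoulli shift on $\hat A^\N$ induced by $\sigma_0$, and by Lemma \ref{l.Borel}(2) there is a Borel $g:\act(\Gamma,X,\mu)\times\act(\Gamma,Y,\nu)\to\act(\Gamma,X,\mu)$ with $g(\rho',\rho'')\conj\rho'\times\rho''$. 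Then $F(A)=g(f(A),\rho_0)$ defines a Borel map $F:{\bf ABEL}\to\act(\Gamma,X,\mu)$ with $F(A)\conj\sigma_{\hat A}\times\rho$ for every $A$.

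It then remains to check that $F\restrict{\bf TFA}$ is the desired reduction. For $A\in{\bf TFA}$ nontrivial, $\hat A$ is an infinite compact group, and since $\sigma_0\restrict\Lambda$ — hence $\sigma_0$ — has only infinite orbits, the generalized Bernoulli shift on $\hat A^\N$ is weakly mixing; being a factor of it, $\sigma_{\hat A}$ is weakly mixing too, so $\sigma_{\hat A}\times\rho$ is a.e. free and weakly mixing (for the trivial group $\sigma_{\hat A}\times\rho\conj\rho$, so this holds there as well). Hence $F(A)\in\actfwm(\Gamma,X,\mu)$ for all $A\in{\bf TFA}$, and as ${\bf TFA}\subseteq{\bf ABEL}$ and $\actfwm(\Gamma,X,\mu)\subseteq\act(\Gamma,X,\mu)$ are Borel, $F\restrict{\bf TFA}$ is Borel as a map into $\actfwm(\Gamma,X,\mu)$. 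By Corollary \ref{t.pseudofamily}, for $A_0,A_1\in{\bf TFA}$ we have $A_0\simeq A_1$ iff $\sigma_{\hat A_0}\times\rho\conj\sigma_{\hat A_1}\times\rho$ iff $F(A_0)\conj F(A_1)$, so $F\restrict{\bf TFA}$ witnesses $\simeq^{\bf TFA}\leq_B\conj^{\actfwm(\Gamma,X,\mu)}$; composing a witness of the $\Sigma^1_1$-completeness of $\simeq^{\bf TFA}$ with $F\restrict{\bf TFA}$ in both coordinates then shows $\conj^{\actfwm(\Gamma,X,\mu)}$ is complete analytic. All the genuinely difficult work — the relative cohomology computation behind Corollary \ref{t.pseudofamily} and the Borel parametrizations of Lemma \ref{l.Borel} — is already in hand, so the only things demanding care here are the bookkeeping of the underlying standard probability spaces and verifying that the actions produced by $F$ genuinely lie in $\actfwm(\Gamma,X,\mu)$; I do not expect a serious obstacle at this stage.
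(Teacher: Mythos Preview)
Your argument is essentially the paper's own proof in the case $\Delta$ is infinite, and in that case it is correct. The gap is your parenthetical assertion that ``$\Delta$ is infinite in the present setting'': nothing in the hypotheses of Theorem~\ref{t.conjanal} forces this. For instance, take $\Gamma=\Lambda$ to be any property (T) group (say $\SL_3(\Z)$), $\Delta=\{e\}$, and $\sigma_0$ the left translation action of $\Gamma$ on itself; then $\Delta\lhd\Lambda\leq\Gamma$ is a $\{\T\}$-pseudorigid triple and $\sigma_0\restrict\Lambda$ is $\Delta$-suitable, yet $\Delta$ is trivial. When $\Delta$ is finite, no a.e.\ free weakly mixing action $\rho$ of $\Gamma$ on a non-atomic space can have $\rho\restrict\Delta$ ergodic, so the hypothesis of Corollary~\ref{t.pseudofamily} that you need simply cannot be arranged, and your construction of $F$ breaks down.

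The paper deals with this by a short case split. If $\Delta$ is finite one dispenses with the auxiliary $\rho$ altogether: the quotient $\sigma_{\hat A}$ of the full Bernoulli shift $\Gamma\actson\hat A^\Gamma$ is already a.e.\ free and weakly mixing, and Theorem~\ref{t.cohoquotient} (with the trivial $\rho$) computes $H^1_{:\Delta}(\sigma_{\hat A}\restrict\Lambda)\simeq\Char(\Lambda/\Delta)\times A$, which distinguishes the $A$'s exactly as in the proof of Corollary~\ref{t.pseudofamily}. If $\Delta$ is infinite one proceeds exactly as you do, with $\rho$ the $\{0,1\}$-Bernoulli shift of $\Gamma$. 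So your plan is right modulo this one missing case, and the fix is the two-line case analysis just described.
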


\begin{proof}
If $\Delta$ is finite then let $\sigma_{\hat A}:\Gamma\actson \hat A^\Gamma/A$ be the quotient of the Bernoulli shift of $\Gamma$ on $\hat A^\Gamma$. Then for $A,A'\in{\bf TFA}$, it follows from Theorem \ref{t.cohoquotient} that the actions $\sigma_{\hat A}\restrict\Lambda,\sigma_{\hat A'}\restrict\Lambda$ are conjugate precisely when $A\simeq A'$. Thus, if we identify $\Gamma$ and $\N$, the function $f:{\bf TFA}\to\act(\Gamma,X,\mu)$ in Lemma \ref{l.Borel}.(1) provides a Borel reduction of $\simeq^{\bf TFA}$ to $\conj^{{\actfwm(\Gamma,X,\mu)}}$.

Assume then that $\Delta$ is infinite. Fix a standard Borel probability space $(X,\mu)$ and consider $\conj^\Gamma$ in $\act(\Gamma,X,\mu)$. Let $\beta:\Gamma\actson \{0,1\}^\Gamma$ be the (usual) Bernoulli shift, where $\{0,1\}$ is equipped with the $(\frac 1 2,\frac 1 2)$ measure. By Lemma \ref{l.Borel} there is a Borel function $h:{\bf TFA}\to\act(\Gamma,X,\mu)$ such that $h(A)$ is conjugate to $\sigma_{\hat A}\times\beta$, where $\sigma_{\hat A}$ is the quotient by $\hat A$ of the generalized Bernoulli shift on $\hat A^\N$ induced by $\sigma_0:\Gamma\actson\N$. Since $\beta\restrict\Delta$ is ergodic (indeed mixing), it follows from Theorem \ref{t.pseudofamily} it holds that $A_0\simeq^{\bf TFA} A_1$ if and only if $h(A_0)\conj^{\actfwm(\Gamma,X,\mu)} h(A_1)$, thus $h$ is the required Borel reduction. 

Since $\simeq^{\bf TFA}$ was shown in \cite{domo08} to be a complete analytic equivalence relation, it follows that so is $\conj^{\actfwm(\Gamma,X\mu)}$.
\end{proof}

Let us call a subgroup $\Lambda\leq\Gamma$ \emph{almost normal} if $[\gamma\Lambda\gamma^{-1}\cap\Lambda:\Lambda]<\infty$ for all $\gamma\in\Gamma$. From the above we get:

\begin{corollary}\label{c.almostnormal}
If{\ } $\Gamma$ contains a free group $\F_n$, $n\geq 2$, as an almost normal subgroup then $\conj^{\actfwm(\Gamma,X,\mu)}$ is complete analytic.
\end{corollary}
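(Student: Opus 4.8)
The plan is to reduce the statement to Theorem~\ref{t.conjanal}: it suffices to produce a $\{\T\}$-pseudorigid triple $\Delta\lhd\Lambda\leq\Gamma$ together with an action $\sigma_0\colon\Gamma\actson\N$ such that $\sigma_0\restrict\Lambda$ is $\Delta$-suitable. Take $\Lambda\leq\Gamma$ to be the given almost normal copy of $\F_n$, $n\geq 2$. Since $\F_n$ surjects onto $\F_2$, hence onto every $2$-generated group, we may fix a surjective homomorphism $\pi\colon\Lambda\twoheadrightarrow G_0$ onto an infinite group with property (T) (for instance $G_0=\SL_3(\Z)$, which is generated by two elements), and put $\Delta=\ker\pi\lhd\Lambda$. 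Then $\Lambda/\Delta\cong G_0$ has property (T), so by Theorem~\ref{popa_csr} it has strongly $\mathscr U_{\rm fin}$-superrigid weakly mixing malleable actions; since $\T\in\mathscr U_{\rm fin}$, a fortiori $\Lambda/\Delta$ has strongly $\{\T\}$-cocycle superrigid weakly mixing malleable actions. Thus $\Delta\lhd\Lambda\leq\Gamma$ is a $\{\T\}$-pseudorigid triple, and since $G_0$ is infinite we have $[\Lambda:\Delta]=\infty$, whence $[\Gamma:\Delta]=\infty$; as $\Gamma$ is countable, $\Gamma/\Delta$ is countably infinite and may be identified with $\N$.

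For the suitable action, let $\sigma_0\colon\Gamma\actson\Gamma/\Delta$ be left translation on cosets. The $\Delta$-orbit of the trivial coset $e\Delta$ is the singleton $\{e\Delta\}$, so $\sigma_0\restrict\Delta$ has a finite orbit. It remains to check that every $\Lambda$-orbit on $\Gamma/\Delta$ is infinite; since the stabilizer in $\Lambda$ of the coset $\gamma\Delta$ is exactly $\Lambda\cap\gamma\Delta\gamma^{-1}$, this amounts to showing $[\Lambda:\Lambda\cap\gamma\Delta\gamma^{-1}]=\infty$ for every $\gamma\in\Gamma$. Write $\Lambda'=\Lambda\cap\gamma\Lambda\gamma^{-1}$; by almost normality of $\Lambda$ we have $[\Lambda:\Lambda']<\infty$, so it is enough to show $[\Lambda':\Lambda'\cap\gamma\Delta\gamma^{-1}]=\infty$. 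Conjugating by $\gamma^{-1}$, and noting $\Lambda'\cap\gamma\Delta\gamma^{-1}=\Lambda'\cap\gamma\Delta\gamma^{-1}$ corresponds to $M\cap\Delta$ with $M=\gamma^{-1}\Lambda\gamma\cap\Lambda$, this becomes $[M:M\cap\Delta]=\infty$; here $M$ is again a finite-index subgroup of $\Lambda$, by almost normality applied to $\gamma^{-1}$. Since $\Delta\lhd\Lambda$ we get $M\cap\Delta\lhd M$ and $[M:M\cap\Delta]=[M\Delta:\Delta]$, while $[\Lambda:M\Delta]\leq[\Lambda:M]<\infty$ together with $[\Lambda:\Delta]=\infty$ forces $[M\Delta:\Delta]=\infty$. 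Hence all $\Lambda$-orbits on $\Gamma/\Delta$ are infinite, and $\sigma_0\restrict\Lambda$ is $\Delta$-suitable.

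Feeding this data into Theorem~\ref{t.conjanal} gives that $\conj^{\actfwm(\Gamma,X,\mu)}$ is complete analytic, which is the assertion of the corollary. The only real content beyond invoking Theorem~\ref{t.conjanal} is the index computation above: one must rule out that some conjugate $\gamma\Delta\gamma^{-1}$ meets $\Lambda$ in a finite-index subgroup, and this is precisely where the two-sided commensuration packaged into almost-normality (using both $\gamma$ and $\gamma^{-1}$) is needed, combined with the fact that $\Delta$ has infinite index in $\Lambda$. The second ingredient, that a free group of rank $\geq 2$ admits an infinite property (T) quotient, is what supplies the superrigid quotient $\Lambda/\Delta$; this is also what makes the hypothesis ``$\F_n\leq\Gamma$'' (rather than an arbitrary almost normal subgroup) do the work, and it generalizes the normal-subgroup example noted before Corollary~\ref{c.almostnormal}.
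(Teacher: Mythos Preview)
Your proof is correct and follows essentially the same approach as the paper: reduce to Theorem~\ref{t.conjanal} by taking $\Delta=\ker(\F_n\twoheadrightarrow\SL_3(\Z))$ and using the coset action $\Gamma\actson\Gamma/\Delta$. The paper's proof is terser, simply asserting that the stabilizer $\gamma\Delta\gamma^{-1}\cap\Lambda$ has infinite index in $\Lambda$ ``since $\Lambda$ is almost normal in $\Gamma$''; you supply the full index computation (conjugating back into $\Lambda$ and using $[\Lambda:\Delta]=\infty$ together with $[\Lambda:M]<\infty$), which is precisely the content the paper leaves implicit.
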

\begin{proof}
We can assume that $n$ is so large that there is an epimorphism $\xi:\F_n\to\SL_3(\Z)$. Let $\Delta=\ker(\xi)$, and let $\sigma_0$ be the natural action of $\Gamma$ on $\Gamma/\Delta$. Note that $\Delta$ has infinite index in $\F_n$, and that the stabilizer of $\gamma\Delta$ under the action of $\Lambda$ on $\Gamma/\Delta$ is $\gamma\Delta\gamma^{-1}\cap\Lambda$, which must have infinite index in $\Lambda$ since $\Lambda$ is almost normal in $\Gamma$. Thus the action of $\Gamma$ on $\Gamma/\Delta$ is $\Delta$-suitable for $\Lambda$, and so Theorem \ref{t.conjanal} applies.
\end{proof}

\subsection{Orbit equivalence and von Neumann equivalence}

By Theorem \ref{t.tfa}, proving Theorems \ref{t.mainthm2} and \ref{t.mainthm2v2} amounts to proving the following:

\begin{theorem}\label{t.mainthm2v3}
Let $\Delta\lhd\Lambda\leq\Gamma$ be a $\{\T\}$-pseudorigid triple and suppose that $\Lambda\simeq\F_n$ for some $2\leq n\leq\infty$. Let $\Delta\lhd\Lambda$ witness this, and suppose $\Gamma$ admits an action $\sigma_0:\Gamma\actson\N$ such that $\sigma_0\restrict\Lambda$ is $\Delta$-suitable. Then $\simeq^{\bf TFA}\leq_B^{\N}\oeq^{\actfwm(\Gamma,X,\mu)}$ and $\simeq^{\bf TFA}\leq_B^{\N}\vneq^{\actfwm(\Gamma,X,\mu)}$.
\end{theorem}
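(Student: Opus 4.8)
The plan is to verify that the Borel map $h\colon{\bf TFA}\to\actfwm(\Gamma,X,\mu)$ constructed in the proof of Theorem~\ref{t.conjanal} --- for which $h(A)$ is conjugate to $\sigma_{\hat A}\times\beta$, where $\beta\colon\Gamma\actson\{0,1\}^\Gamma$ is the Bernoulli shift (so that $\beta\restrict\Delta$ is ergodic) --- is in fact a \emph{countable-to-one} Borel reduction of $\simeq^{\bf TFA}$ to $\oeq^{\actfwm(\Gamma,X,\mu)}$ and to $\vneq^{\actfwm(\Gamma,X,\mu)}$. The ``implies'' half is immediate: conjugate actions are orbit equivalent, and for free p.m.p.\ actions the Feldman--Moore/Singer correspondence identifies $L^\infty(X)\rtimes_\sigma\Gamma$ with the von Neumann algebra of $E_\sigma$ (with trivial $2$-cocycle), so orbit equivalent free actions are von Neumann equivalent; thus $\conj\subseteq\oeq\subseteq\vneq$ on $\actfwm(\Gamma,X,\mu)$, and Theorem~\ref{t.conjanal} (via Corollary~\ref{t.pseudofamily}) gives $A_0\simeq^{\bf TFA}A_1\Rightarrow h(A_0)\conj h(A_1)\Rightarrow h(A_0)\oeq h(A_1)\Rightarrow h(A_0)\vneq h(A_1)$. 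Since $\oeq\subseteq\vneq$ on the range of $h$, once one shows that $h$ induces a countable-to-one map onto $\actfwm(\Gamma,X,\mu)/\!\vneq$ the analogous statement for $\oeq$ follows automatically (the $h$-preimage of an $\oeq$-class is a union of $\simeq^{\bf TFA}$-classes contained in the $h$-preimage of a $\vneq$-class). So everything reduces to one assertion.

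\textbf{The crux.} We must show that for each fixed $A_1\in{\bf TFA}$ only countably many isomorphism classes $A_0\in{\bf TFA}$ satisfy $\sigma_{\hat A_0}\times\beta\vneq\sigma_{\hat A_1}\times\beta$; by Corollary~\ref{t.pseudofamily} this says exactly that the $\vneq$-class of $\sigma_{\hat A_1}\times\beta$ meets only countably many conjugacy classes from the family $\langle\sigma_{\hat A}\times\beta\rangle_{A\in{\bf TFA}}$. Since the conjugacy classes in this family are separated by the invariant $H^1_{:\Delta}$ (Theorem~\ref{t.cohoquotient}, Corollary~\ref{t.pseudofamily}), the goal becomes: a von Neumann equivalence between two members of the family changes $H^1_{:\Delta}$ only up to a countable ambiguity. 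Here is the route I would take. First, by Sato's analysis of von Neumann equivalence (\cite{sato09}) together with the Feldman--Moore/Singer identification, a von Neumann equivalence of our (free) actions yields a genuine orbit equivalence $T\colon X\to X$ between $\sigma_{\hat A_0}\times\beta$ and $\sigma_{\hat A_1}\times\beta$ with Zimmer cocycle $w\colon\Gamma\times X\to\Gamma$. Second, restrict attention to $\Lambda\cong\F_n$ and pass to the ergodic decompositions over $\Delta$; using the standard diagrams of \S4, $T$ induces a measured correspondence between the $\Lambda/\Delta$-actions $\overline{(\sigma_{\hat A_0}\times\beta)}_\Delta$ and $\overline{(\sigma_{\hat A_1}\times\beta)}_\Delta$ on the ergodic-component spaces, carrying $\T$-valued cohomological data to $\T$-valued cohomological data. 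Third, since $\Lambda/\Delta$ has strongly $\{\T\}$-cocycle superrigid weakly mixing malleable actions (Theorem~\ref{popa_csr}), this $\T$-valued data --- which is precisely what $H^1_{:\Delta}$ records --- is essentially rigid: $A_0$ is recovered from $A_1$ modulo the finitely much slack coming from inner perturbations, from the finite $\sigma_0\restrict\Delta$-orbits supplied by $\Delta$-suitability, and from the (countably many) ways $T$ can interact with the subgroup $\Lambda$. Summing over these finitely or countably many choices leaves only countably many possibilities for $[A_0]_{\simeq^{\bf TFA}}$.

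\textbf{Main obstacle.} The difficulty --- and the reason this argument must depart from the strategy of \cite{tornquist11} --- is exactly the third step. Here $\Gamma$ is \emph{not} assumed to possess any rigidity property: it merely contains $\F_n$, so the Zimmer cocycle $w\colon\Gamma\times X\to\Gamma$ is not itself cocycle superrigid, and one cannot conclude that an orbit equivalence between these actions arises from a conjugacy twisted by an automorphism of $\Gamma$. One is forced to descend to the $\Lambda/\Delta$-level, where cocycle superrigidity \emph{is} available, and to prove that this descent discards only countably much information; tracking the finite orbits of $\sigma_0\restrict\Lambda$ over $\Delta$, and coping with the fact that $\Lambda$ is assumed only to be a subgroup (not a normal subgroup) of $\Gamma$, is the technical heart of the matter. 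Once the reduction $\simeq^{\bf TFA}\leq_B^{\N}\vneq^{\actfwm(\Gamma,X,\mu)}$ (and hence the one for $\oeq$) is established, it is Theorem~\ref{t.tfa} that converts it into the non-Borelness asserted in Theorems~\ref{t.mainthm2} and~\ref{t.mainthm2v2} --- but that final step is already carried out in \S5.
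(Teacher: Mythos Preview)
Your proposal has a genuine gap at the ``crux'' step, and the paper takes a fundamentally different route that avoids it.

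First, the claim that ``a von Neumann equivalence of our (free) actions yields a genuine orbit equivalence'' is false. The Feldman--Moore/Singer correspondence says that an isomorphism $L^\infty(X)\rtimes_{\sigma_0}\Gamma\simeq L^\infty(X)\rtimes_{\sigma_1}\Gamma$ \emph{carrying the Cartan subalgebra $L^\infty(X)$ to $L^\infty(X)$} comes from an orbit equivalence; mere von Neumann equivalence does not supply this. Nothing in \cite{sato09} bridges that gap. So your reduction of the $\vneq$ case to an orbit-equivalence Zimmer-cocycle argument does not get off the ground. Second, even in the $\oeq$ case your ``descent to $\Lambda/\Delta$'' sketch is not a proof: the Zimmer cocycle of an orbit equivalence is $\Gamma$-valued, and the hypothesis only gives $\{\T\}$-cocycle superrigidity for $\Lambda/\Delta$; there is no mechanism in your outline for converting the $\Gamma$-valued rigidity problem into a $\T$-valued one, and the ``countable slack'' you invoke is never identified concretely.

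The paper's actual proof does not use the Bernoulli shift $\beta$ at all for this theorem. Instead it replaces $\beta$ by the \emph{co-induced} action $\hat\rho_n:\Gamma\actson(\T^2)^{\Gamma/\Lambda}$ coming from the linear action $\rho_n:\F_n\actson\T^2$ (the restriction of $\SL_2(\Z)\actson\T^2$), and sets $h(A)\conj\sigma_{\hat A}\times\hat\rho_n$. The point of this specific choice is that each $h(A)\restrict\F_n$ has $\rho_n$ as a quotient with the freeness property required by Ioana's separability theorem (Theorem~\ref{t.ioana}, i.e.\ \cite[Theorems 1.3 and 4.7]{ioana11}). Ioana's theorem then gives, \emph{as a black box}, that within any single $\oeq$-class or $\vneq$-class the family $\{h(A)\restrict\F_n\}$ meets only countably many $\F_n$-conjugacy classes; combined with Corollary~\ref{t.pseudofamily} (which separates the $h(A)$ already by their $\Lambda$-restrictions via $H^1_{:\Delta}$), this yields countable-to-one immediately. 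The non-amenability of $\Delta$ and Lemma~\ref{l.wmcoind} are used only to check that $\hat\rho_n\restrict\Delta$ is weakly mixing so that Corollary~\ref{t.pseudofamily} applies with $\rho=\hat\rho_n$. In short: the missing idea is Ioana's theorem, and the action must be built to satisfy its hypotheses.
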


The proof of requires a few additional steps of preparation. Consider the usual linear action $\rho_0:\SL_2(\Z)\actson\Z^2$, which gives rise to the measure preserving action $\rho:\SL_2(\Z)\actson \T^2$ when $\T^2$ is identified with the character group of $\Z^2$ (see e.g. \cite[\S 2]{tornquist06}.) The following easy fact is in various forms well-known.


\begin{lemma}
For any point $a\in\Z^2\setminus\{0\}$ the stabilizer
$$
\stab(a)=\{g\in\SL_2(\Z): \rho_0(g)(a)=a\}
$$
is isomorphic to $\Z$.
\end{lemma}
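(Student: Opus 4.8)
The plan is to reduce everything to the stabilizer of the standard basis vector $e_1=(1,0)$, exploiting two elementary facts: that $\SL_2(\Z)$ acts transitively on the set of primitive vectors, and that rescaling a vector does not change its stabilizer. So the first step is the reduction to primitive vectors. Given $a\in\Z^2\setminus\{0\}$, let $c\geq 1$ be the greatest common divisor of the two coordinates of $a$ and write $a=ca'$ with $a'$ primitive (coprime coordinates). Since $\Z^2$ is torsion free, the identity $c\cdot\rho_0(g)(a')=\rho_0(g)(a)=a=ca'$ may be divided by $c$ to yield $\rho_0(g)(a')=a'$, and conversely $\rho_0(g)(a')=a'$ trivially gives $\rho_0(g)(a)=a$. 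Hence $\stab(a)=\stab(a')$, and it suffices to handle primitive $a'$.

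The second step uses transitivity via Bézout's identity: if $a'=(p,q)$ with $\gcd(p,q)=1$, choose $r,s\in\Z$ with $ps-qr=1$, so that $h=\bigl(\begin{smallmatrix}p&r\\ q&s\end{smallmatrix}\bigr)\in\SL_2(\Z)$ satisfies $\rho_0(h)(e_1)=a'$. Then $g\in\stab(a')$ if and only if $h^{-1}gh\in\stab(e_1)$, i.e. $\stab(a')=h\,\stab(e_1)\,h^{-1}$; in particular $\stab(a')$ is isomorphic, indeed conjugate in $\SL_2(\Z)$, to $\stab(e_1)$.

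The third and final step is a direct computation of $\stab(e_1)$: the condition $\rho_0(g)(e_1)=e_1$ says precisely that the first column of $g$ is $\bigl(\begin{smallmatrix}1\\ 0\end{smallmatrix}\bigr)$, so $g=\bigl(\begin{smallmatrix}1&b\\ 0&d\end{smallmatrix}\bigr)$, and $\det g=1$ forces $d=1$. Thus $\stab(e_1)=\{\,\bigl(\begin{smallmatrix}1&b\\ 0&1\end{smallmatrix}\bigr):b\in\Z\,\}$, which is infinite cyclic, generated by $\bigl(\begin{smallmatrix}1&1\\ 0&1\end{smallmatrix}\bigr)$. Putting the three steps together gives $\stab(a)\cong\Z$, an explicit generator being the $h$-conjugate of $\bigl(\begin{smallmatrix}1&1\\ 0&1\end{smallmatrix}\bigr)$. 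The argument is routine and presents no genuine obstacle; the only point that deserves a moment's care is the first step, namely verifying that passing from $a$ to the primitive vector $a'$ does not enlarge the stabilizer.
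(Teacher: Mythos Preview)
Your proof is correct and follows essentially the same approach as the paper: reduce to the case $a=(1,0)$ and compute the stabilizer there as the upper unitriangular matrices. The paper simply asserts the reduction (``We may of course assume that $a=(1,0)$''), whereas you spell out the two ingredients behind it---passing to the underlying primitive vector and then using transitivity on primitive vectors via B\'ezout---so your argument is a fleshed-out version of the paper's one-line proof.
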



\begin{proof}
We may of course assume that $a=(1,0)$. Then $\stab(a)$ consists of the upper triangular matrices with 1's on the diagonal, which is isomorphic to $\Z$.
\end{proof}

\begin{lemma}
The restriction of $\rho$ to any non-amenable subgroup $\Lambda\leq\SL_2(\Z)$ on $\T^2$ is weakly mixing.
\end{lemma}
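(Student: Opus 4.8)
The plan is to reduce weak mixing of $\rho\restrict\Lambda$ to the statement that every nonzero vector in $\Z^2$ has infinite orbit under the linear action $\rho_0\restrict\Lambda$, and then to derive the latter from the stabilizer computation in the previous Lemma together with the non-amenability of $\Lambda$.

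First I would use the standard reformulation: a p.m.p.\ action $\tau:\Lambda\actson(Z,\zeta)$ is weakly mixing if and only if the diagonal action $\tau\times\tau:\Lambda\actson(Z\times Z,\zeta\times\zeta)$ is ergodic. Here $\rho\restrict\Lambda$ acts by continuous automorphisms of the compact abelian group $\T^2$, and $\rho\times\rho$ acts by automorphisms of $\T^2\times\T^2$. For an action of a group $\Lambda$ by automorphisms of a compact abelian group $K$, the Koopman representation on $L^2(K)$ decomposes as the orthogonal sum $\bigoplus_{\chi\in\widehat K}\C\chi$ over the characters, with $\Lambda$ merely permuting this orthonormal basis according to the dual action on $\widehat K$. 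Hence such an action is ergodic if and only if every nontrivial $\Lambda$-orbit in $\widehat K$ is infinite.

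Second I would unwind the duality. We have $\widehat{\T^2}=\Z^2$, and under this identification the dual of $\rho$ is, up to replacing each $g$ by $g^{-1}$, the linear action $\rho_0$; since $\Lambda$ is a group, the $\Lambda$-orbit of a vector $a\in\Z^2$ under the dual action coincides with its $\rho_0\restrict\Lambda$-orbit, and the stabilizers agree. Likewise $\widehat{\T^2\times\T^2}=\Z^2\oplus\Z^2$ with the diagonal action, and a pair $(a,b)$ has finite $\Lambda$-orbit precisely when both $a$ and $b$ do. Combining with the first step, $\rho\restrict\Lambda$ is weakly mixing if and only if every $a\in\Z^2\setminus\{0\}$ has infinite $\rho_0\restrict\Lambda$-orbit.

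Third, suppose toward a contradiction that some $a\in\Z^2\setminus\{0\}$ has finite $\rho_0\restrict\Lambda$-orbit. By the orbit–stabilizer relation, $\Lambda\cap\stab(a)$ has finite index in $\Lambda$. By the previous Lemma, $\stab(a)$ is isomorphic to $\Z$, so $\Lambda\cap\stab(a)$ is amenable; but a group with a finite-index amenable subgroup is amenable, so $\Lambda$ would be amenable, contrary to hypothesis. Thus every nonzero vector has infinite orbit and $\rho\restrict\Lambda$ is weakly mixing. The only mildly delicate point is getting weak mixing rather than merely ergodicity: this is exactly what the $\rho\times\rho$ reformulation provides, since for automorphism actions on compact abelian groups a nontrivial orbit in $\widehat K\oplus\widehat K$ is finite iff both coordinates have finite orbit, so a single nonzero coordinate with infinite orbit already forces the pair to have infinite orbit.
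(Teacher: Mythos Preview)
Your proof is correct and follows essentially the same approach as the paper: both reduce weak mixing to the statement that every nonzero vector in $\Z^2$ has infinite $\Lambda$-orbit, and then deduce this from the previous Lemma (stabilizers are $\simeq\Z$, hence amenable, hence of infinite index in the non-amenable $\Lambda$). The only difference is packaging: the paper phrases the reduction via the Koopman representation on $L^2_0(\T^2)\simeq\ell^2(\Z^2\setminus\{0\})$ and appeals implicitly to the fact that a permutation representation with only infinite orbits has no finite-dimensional subrepresentations, whereas you spell out the same reduction by passing through ergodicity of $\rho\times\rho$ and the dual $\Z^2\oplus\Z^2$.
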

\begin{proof}
The Koopman representation of $\SL_2(\Z)$ on $L^2_0(\T^2)$ associated to $\rho$ is isomorphic to the representation of $\SL_2(\Z)$ on $\ell^2(\Z^2\setminus\{0\})$ arising from the linear action of $\SL_2(\Z)$ on $\Z^2$. Hence it is enough to show that if $\Lambda\leq\SL_2(\Z)$ is non-amenable, then all $\Lambda$-orbits in $\Z^2\setminus\{0\}$ are infinite. This follows since the stabilizers of any $a\in\Z^2\setminus\{0\}$ is amenable and therefore have infinite index in $\Lambda$.
\end{proof}

The next Lemma follows from the analysis of the co-induced action that can be found in \cite{tornquist09} or \cite[Lemma 2.2]{ioana11}. (The reader unfamiliar with the co-induction construction should consult either of those references for background.)

\begin{lemma}\label{l.wmcoind}
Suppose $\Lambda_0\leq\Lambda\leq\Gamma$ are countable discrete subgroups, and suppose $\sigma:\Lambda\actson (X,\mu)$ is a measure preserving action such that the restriction of this action to any subgroup $\Upsilon\leq\Lambda$ which is isomorphic to a finite index subgroup in $\Lambda_0$ is weakly mixing. Then the co-induced action $\hat\sigma:\Gamma\actson X^{\Gamma/\Lambda}$ is weakly mixing when restricted to $\Lambda_0$.

In particular, if $\Lambda$ is non-amenable and the restriction of $\sigma$ to any non-amenable subgroup of $\Lambda$ is weakly mixing, then the co-induced action $\hat\sigma:\Gamma\actson X^{\Gamma/\Lambda}$ is weakly mixing when restricted to $\Lambda$.
\end{lemma}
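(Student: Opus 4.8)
The plan is to reduce the statement to a standard fact about co-induced actions, namely that weak mixing of the restriction to a subgroup $\Lambda_0 \le \Gamma$ can be detected on the fibered structure $X^{\Gamma/\Lambda}$. Recall that the co-induced action $\hat\sigma : \Gamma \actson X^{\Gamma/\Lambda}$ is built from a choice of transversal $T$ for $\Gamma/\Lambda$ and the cocycle $\Gamma \times \Gamma/\Lambda \to \Lambda$ recording the ``twist'' of the action; the key point is that for each coset $c = \gamma\Lambda$, the $\Lambda$-action on the $c$-th coordinate copy of $X$ (when we restrict $\hat\sigma$ to $\Lambda$ and look at the orbit of coordinates) is, up to conjugating by the appropriate element of $\Lambda$, a $\Lambda$-action whose restriction to the relevant stabilizer subgroup is $\sigma$ restricted to a conjugate of $\Lambda \cap \gamma^{-1}\Lambda_0\gamma$, which is either all of $\Lambda$ or a finite-index subgroup of a conjugate of $\Lambda_0$. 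This is precisely the setup analyzed in \cite{tornquist09} and \cite[Lemma 2.2]{ioana11}, and I would cite that analysis rather than redo it.

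Concretely, first I would recall the description of the Koopman representation of $\hat\sigma$ restricted to $\Lambda_0$: $L^2_0(X^{\Gamma/\Lambda})$ decomposes as a direct sum over finite subsets $F \subseteq \Gamma/\Lambda$ of tensor products $\bigotimes_{c \in F} L^2_0(X)$, and the $\Lambda_0$-action permutes these summands according to the $\Lambda_0$-action on finite subsets of $\Gamma/\Lambda$, while acting within each summand through the appropriate conjugates of $\sigma$. Weak mixing of $\hat\sigma \restrict \Lambda_0$ is equivalent to the absence of nonzero $\Lambda_0$-finite-dimensional subrepresentations of $L^2_0(X^{\Gamma/\Lambda})$. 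A $\Lambda_0$-finite vector would have to be supported (in the above decomposition) on a $\Lambda_0$-orbit of finite subsets of $\Gamma/\Lambda$ that is itself finite — hence on cosets whose $\Lambda_0$-stabilizer has finite index in $\Lambda_0$ — and then project to a $\Upsilon$-finite vector in $\bigotimes_{c \in F} L^2_0(X)$ for the corresponding finite-index subgroup $\Upsilon$. Since a nontrivial tensor product $\bigotimes_{c\in F} L^2_0(X)$ carries a weakly mixing $\Upsilon$-action whenever each factor does (weak mixing is preserved under tensor products), and $\sigma \restrict \Upsilon$ is weakly mixing by hypothesis, no such finite vector exists unless $F = \emptyset$; but $F = \emptyset$ corresponds to the constants, which were removed. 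Hence $\hat\sigma \restrict \Lambda_0$ is weakly mixing.

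For the second assertion, take $\Lambda_0 = \Lambda$. A subgroup $\Upsilon \le \Lambda$ isomorphic to a finite-index subgroup of $\Lambda$ is itself non-amenable (finite-index subgroups of non-amenable groups are non-amenable, and non-amenability is an isomorphism invariant), so $\sigma \restrict \Upsilon$ is weakly mixing by hypothesis, and the first part applies directly. The main obstacle — and the reason I would lean on \cite{tornquist09,ioana11} rather than expanding the argument — is the bookkeeping in identifying precisely which subgroup of $\Lambda$ acts on each tensor factor $L^2_0(X)$ inside a $\Lambda_0$-invariant summand: one must track the cocycle $\Gamma \times \Gamma/\Lambda \to \Lambda$ carefully to see that the stabilizer in $\Lambda_0$ of a coset $\gamma\Lambda$ maps, under the cocycle evaluated along that stabilizer, into a finite-index subgroup of a conjugate of $\Lambda_0$ inside $\Lambda$, so that the hypothesis on $\sigma$ applies to the right subgroup. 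Once that identification is in place, the representation-theoretic argument above is routine.
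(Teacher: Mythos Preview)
Your proposal is correct and aligns with the paper's own treatment, which simply defers to the analysis in \cite{tornquist09} and \cite[Lemma 2.2]{ioana11} without giving any argument. In fact you supply considerably more detail than the paper does: your Koopman decomposition over finite subsets $F\subseteq\Gamma/\Lambda$, the reduction to finite $\Lambda_0$-orbits of such $F$ (which is justified, since an induced representation from an infinite-index subgroup never contains a finite-dimensional subrepresentation), and the identification via the cocycle of the relevant subgroup of $\Lambda$ as a conjugate of a finite-index subgroup of $\Lambda_0$, are exactly the ingredients those references provide.
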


It is well-known that $\SL_2(\Z)$ contains a subgroup (of finite index) isomorphic to $\F_2$, and therefore contains all $\F_n$, $1\leq n\leq\infty$. We let $\rho_n=\rho\restrict\F_n$, where we identify $\F_n$ with some (fixed) subgroup in $\SL_2(\Z)$. The key theorem that we need for the proof of Theorem \ref{t.mainthm2v3} is the following result due to Ioana:

\begin{theorem}[Ioana, {\cite[Theorem 1.3 and 4.7]{ioana11}}]\label{t.ioana}
Let $\Gamma$ be a countable discrete group containing a free group $\F_n$, $2\leq n\leq\infty$, and let $(X,\mu)$ be a standard Borel probability space. Let $(\sigma^i)_{i\in I}$ be a family of a.e. free ergodic m.p. $\Gamma$-actions. Suppose that $\rho_n$ is a quotient of $\sigma^i\restrict\F_n$ with a quotient map $p^i:X\to\T^2$ satisfying
$$
(\forall\gamma\in\Gamma) \mu(\{x\in X: p^i(\gamma\cdot_{\sigma^i} x)=p^i(x)\})=0.
$$
Suppose further that $\sigma^i\restrict\F_n$ is ergodic. Then:

(i) Suppose that the family $(\sigma^i)_{i\in I}$ consists of mutually orbit equivalent actions. Then
$$
\{\sigma^i\restrict\F_n: i\in I\}
$$
contains at most countably many non-conjugate actions of $\F_n$.

(ii) Suppose that the family $(\sigma^i)_{i\in I}$ consists of mutually von Neumann equivalent actions. Then
$$
\{\sigma^i\restrict\F_n: i\in I\}
$$
contains at most countably many non-conjugate actions of $\F_n$.
\end{theorem}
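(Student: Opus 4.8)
The plan is to prove (i) by a deformation/rigidity argument organised around the relative property~(T) of the pair $(\Z^2\rtimes\F_n,\Z^2)$ (semidirect product via the linear $\F_n\leq\SL_2(\Z)$ action on $\Z^2$, whose dual is $\rho_n\colon\F_n\actson\T^2$), and to obtain (ii) by running the same argument inside the group-measure-space factors. Fix $i\in I$. Since $p^i$ intertwines $\sigma^i\restrict\F_n$ with $\rho_n$ and $p^i_*\mu$ is Haar measure on $\T^2$, Fourier duality $L^\infty(\T^2)\cong L(\Z^2)$ yields a trace-preserving embedding
\[
\iota_i\colon\ L(\Z^2\rtimes\F_n)\ \cong\ L^\infty(\T^2)\rtimes_{\rho_n}\F_n\ \hookrightarrow\ L^\infty(X)\rtimes_{\sigma^i}\F_n\ \subseteq\ L(E_{\sigma^i}),
\]
whose range normalises the Cartan subalgebra $L^\infty(X)$. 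The pair $(\Z^2\rtimes\F_n,\Z^2)$ has the relative property~(T): this is classical for $(\SL_2(\Z)\ltimes\Z^2,\Z^2)$ and, by Burger's criterion, passes to $\F_n\leq\SL_2(\Z)$ because no non-amenable subgroup of $\SL_2(\Z)$ fixes a mean on $\T^2\setminus\{0\}$. Hence $A_i:=\iota_i(L(\Z^2))$ is a \emph{rigid} abelian subalgebra of $L(E_{\sigma^i})$ in Popa's sense. The hypothesis that $\mu(\{x\colon p^i(\gamma\cdot x)=p^i(x)\})=0$ for \emph{all} $\gamma\in\Gamma$ — not only $\gamma\in\F_n$ — is used to guarantee that the inclusion $\iota_i(L(\Z^2\rtimes\F_n))\subseteq L(E_{\sigma^i})$ is \emph{mixing}: everything orthogonal to it is mixing relative to it.

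Now assume the $\sigma^i$, $i\in I$, are mutually orbit equivalent; fix $i_0$ and, for each $i$, an orbit equivalence inducing a $*$-isomorphism $\theta_i\colon L(E_{\sigma^i})\to L(E_{\sigma^{i_0}})$ sending $L^\infty(X)$ onto $L^\infty(X)$. Rigidity is algebraic, so $\theta_i(A_i)$ is rigid in $L(E_{\sigma^{i_0}})$; by the mixing property above, Popa's intertwining dichotomy has only one branch available, producing a nonzero partial isometry intertwining $\theta_i(A_i)$ into $\iota_{i_0}(L(\Z^2\rtimes\F_n))$. Upgrading this intertwiner in the standard way — using ergodicity of $\sigma^{i_0}\restrict\F_n$ to pass from corners to the whole algebra, and the fact that both $\theta_i\circ\iota_i$ and $\iota_{i_0}$ have range normalising the common Cartan $L^\infty(X)$ — one obtains a unitary $u\in\mathcal N_{L(E_{\sigma^{i_0}})}(L^\infty(X))$ with
\[
u\,\theta_i\bigl(\iota_i(L(\Z^2\rtimes\F_n))\bigr)\,u^*\ \subseteq\ \iota_{i_0}(L(\Z^2\rtimes\F_n)).
\]
Replacing $\theta_i$ by $\Ad(u)\circ\theta_i$, which is still implemented by an orbit equivalence, this says that the orbit equivalence carries the factor $p^i$ to a factor of $\sigma^{i_0}\restrict\F_n$ of the form $g_i\circ p^{i_0}$ for a measure isomorphism $g_i$ of $(\T^2,\text{Haar})$ conjugating $\rho_n$ to $\rho_n$ modulo an $\F_n$-valued cocycle. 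Thus $\sigma^i\restrict\F_n$ and $\sigma^{i_0}\restrict\F_n$, viewed as extensions of $\rho_n$, are conjugate modulo the gluing datum $(g_i,\text{cocycle})$.

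It remains to observe that this gluing datum ranges over a \emph{countable} set modulo the conjugacy at hand: by separability of $L(E_{\sigma^{i_0}})$ there are only countably many $L^\infty(X)$-normalising unitary conjugacy classes of embeddings of the fixed algebra $L(\Z^2\rtimes\F_n)$ into $\iota_{i_0}(L(\Z^2\rtimes\F_n))$ realised by such intertwiners, and the residual automorphism/cocycle data for the self-conjugacy of $\rho_n$ involves only countably much group-theoretic data (automorphisms of $\Z^2$, and $\F_n$-valued cocycles modulo coboundaries). Hence within the fixed orbit equivalence class the conjugacy class of $\sigma^i\restrict\F_n$ has at most countably many possibilities, which is (i). For (ii) one replaces $L(E_{\sigma^i})$ by $M_i:=L^\infty(X)\rtimes_{\sigma^i}\Gamma$ throughout: an isomorphism $M_i\cong M_{i_0}$ need not carry $L^\infty(X)$ to $L^\infty(X)$, but $\theta_i(A_i)$ is still rigid and, again by the mixing property and Popa's dichotomy, intertwines into $\iota_{i_0}(L(\Z^2\rtimes\F_n))$; invoking Popa's uniqueness results for Cartan subalgebras inside such factors to control the positions of the two copies of $L^\infty(X)$, the argument of the previous two paragraphs goes through with the same countable ambiguity.

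The main obstacle is the step sketched above: showing that the distinguished copy $\iota_{i_0}(L(\Z^2\rtimes\F_n))$ is the \emph{only} place where a rigid abelian subalgebra of $L(E_{\sigma^{i_0}})$ (resp.\ of $M_{i_0}$) can sit, which is precisely what the full fixed-point-free hypothesis (for all $\gamma\in\Gamma$) buys via relative mixing, so that Popa's intertwining dichotomy collapses to a single branch — and then verifying that the ambiguity left after the intertwining, the "gluing datum" above, is genuinely countable.
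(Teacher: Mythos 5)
This statement is not proved in the paper at all: it is quoted verbatim from Ioana's work (\cite[Theorem 1.3 and 4.7]{ioana11}) and used as a black box, so the relevant benchmark is Ioana's own deformation/rigidity argument. Your general framework — relative property (T) of the pair $(\Z^2\rtimes\F_n,\Z^2)$, the trace-preserving embedding of $L(\Z^2\rtimes\F_n)\cong L^\infty(\T^2)\rtimes_{\rho_n}\F_n$ via $p^i$, and Popa's intertwining-by-bimodules — is indeed the right toolkit, but two central steps of your argument do not work. First, the claim that every rigid abelian subalgebra of $L(E_{\sigma^{i_0}})$ (or of $M_{i_0}$) must intertwine into the distinguished copy $\iota_{i_0}(L(\Z^2\rtimes\F_n))$ is unjustified: locating rigid subalgebras requires a deformation (malleability, spectral gap, closable derivations, amalgamated free product structure, \dots) of the ambient algebra, and the hypotheses give none — $\sigma^{i_0}$ is an arbitrary free ergodic action admitting $\rho_n$ as a quotient. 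Moreover the condition $\mu(\{x: p^i(\gamma\cdot x)=p^i(x)\})=0$ for all $\gamma\in\Gamma$ is a freeness condition on the quotient map, not a relative mixing condition; for instance a compact (isometric) extension of $\rho_n$ satisfies it while being as far from relatively mixing as possible, so there is no ``single branch'' for the dichotomy to collapse to. In Ioana's proof this freeness hypothesis is used at the later stage, to convert an intertwining into conjugacy of the restricted $\F_n$-actions, not to locate rigid subalgebras; and the appeal to ``Popa's uniqueness results for Cartan subalgebras'' in case (ii) is not available in this generality (uniqueness of Cartan subalgebras is unknown, and in general false, for such group measure space factors) and is not how the von Neumann case is handled.

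Second, your counting step is based on a false principle: separability of $L(E_{\sigma^{i_0}})$ does not imply that there are only countably many ($L^\infty(X)$-normalising) unitary conjugacy classes of embeddings of a fixed algebra — separable $\II_1$ factors contain uncountably many pairwise non-conjugate masas. The actual mechanism for countability is Popa's separability pigeonhole, run by contradiction: if uncountably many of the restrictions $\sigma^i\restrict\F_n$ were pairwise non-conjugate, the orbit (resp.\ von Neumann) equivalences would place uncountably many copies of the rigid inclusion $p^{i*}(L^\infty(\T^2))\subset L^\infty(X)\rtimes_{\sigma^i}\F_n$ inside one fixed separable tracial von Neumann algebra; rigidity plus separability of $L^2$ forces two of these copies (for some $i\neq j$, not ``each $i$ against a fixed $i_0$'s distinguished copy'') to be uniformly close on the unit ball, hence intertwined by a nonzero partial isometry, and only then do ergodicity of $\sigma^i\restrict\F_n$ and the fixed-point-freeness of the $p^i$ upgrade this to conjugacy of $\sigma^i\restrict\F_n$ and $\sigma^j\restrict\F_n$, contradicting pairwise non-conjugacy. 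So the countable bound comes from the impossibility of an uncountable family, not from bounding a ``gluing datum'' attached to each single $i$; as written, both the location step and the counting step in your proposal are genuine gaps.
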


With these facts in hand, we can now prove Theorem \ref{t.mainthm2v3}.

\begin{proof}[Proof of Theorem \ref{t.mainthm2v3}]
Fix $\Delta\lhd\Lambda\leq\Gamma$ and $\sigma_0$ as in the statement of the theorem. As in \S 4, for $A\in{\bf ABEL}$, let $\sigma_{\hat A}$ be the quotient of the generalized Bernoulli shift of $\Gamma$ on $\hat A^\N$ induced by $\sigma_0$. Let $\hat\rho_n:(\T^2)^{\Gamma/\Lambda}$ be the coinduced action. Note that $\Delta$ must be non-amenable since otherwise it would be isomorphic to $\{1\}$, in which case $\Lambda/\Delta$ wouldn't be rigid, or else it would be isomorphic to $\Z$, in which case $\Delta$ wouldn't be normal in $\Lambda$. So by Lemma \ref{l.wmcoind} the action $\hat\rho_n\restrict\Delta$ is weakly mixing.

By Lemma \ref{l.Borel} there is a Borel map $h:{\bf TFA}\to\actfwm(\Gamma,X,\mu)$ such that $h(A)$ is conjugate to $\sigma_A\times\hat\rho_n$. Theorem \ref{t.pseudofamily} then gives that $A_0\simeq^{\bf TFA} A_1$ if and only if $h(A_0)\conj h(A_1)$, making $h$ a Borel reduction of $\simeq^{\bf TFA}$ to $\conj^{\actfwm(\Gamma,X,\mu)}$.

The actions $\sigma_A\times\hat\rho_n$ clearly satisfy Ioana's Theorem \ref{t.ioana} (taking $p$ to be the projection onto the the $\Lambda$-coordinate of $(\T^2)^{\Gamma/\Lambda}$), and therefore at most countably many non-conjugate actions of the form $\sigma_A\times\hat\rho_n$ can be orbit equivalent or von Neumann equivalent. Thus $h$ is a countable-to-1 Borel reduction of $\simeq^{\bf TFA}$ to $\oeq^{\actfwm(\Gamma,X,\mu)}$ and $\vneq^{\actfwm(\Gamma,X,\mu)}$, and so by Theorem \ref{t.tfa} the relations $\oeq^{\actfwm(\Gamma,X,\mu)}$ and $\vneq^{\actfwm(\Gamma,X,\mu)}$ are not Borel.
\end{proof}

\begin{corollary}
If $\Gamma$ contains a free group $\F_n$, $n\geq 2$, as an almost normal subgroup then $\oeq^{\actfwm(\Gamma,X,\mu)}$ and $\vneq^{\actfwm(\Gamma,X,\mu)}$ are analytic and not Borel.
\end{corollary}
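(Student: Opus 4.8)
\emph{Proof plan.} The strategy is to deduce this from Theorem~\ref{t.mainthm2v3} and Theorem~\ref{t.tfa}, in exact analogy with the way Corollary~\ref{c.almostnormal} was deduced from Theorem~\ref{t.conjanal}. That is, the plan is to exhibit inside $\Gamma$ a $\{\T\}$-pseudorigid triple $\Delta\lhd\Lambda\leq\Gamma$ with $\Lambda\simeq\F_m$ for some $2\le m\le\infty$, together with an action $\sigma_0\colon\Gamma\actson\N$ such that $\sigma_0\restrict\Lambda$ is $\Delta$-suitable; once this is done, Theorem~\ref{t.mainthm2v3} yields $\simeq^{\bf TFA}\leq_B^{\N}\oeq^{\actfwm(\Gamma,X,\mu)}$ and $\simeq^{\bf TFA}\leq_B^{\N}\vneq^{\actfwm(\Gamma,X,\mu)}$, and Theorem~\ref{t.tfa} then shows neither relation is Borel. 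Analyticity of $\oeq^{\actfwm(\Gamma,X,\mu)}$ and $\vneq^{\actfwm(\Gamma,X,\mu)}$ is already known (the Lemma of \S2.1), so only non-Borelness is at issue.

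First I would set up the groups, reusing the construction from the proof of Corollary~\ref{c.almostnormal}. The group $\SL_3(\Z)$ has property~(T), so by Theorem~\ref{popa_csr} and the discussion following it, it has strongly $\{\T\}$-cocycle superrigid weakly mixing malleable actions. Since $\SL_3(\Z)$ is finitely generated, and since a finite-index subgroup of an almost normal (i.e. commensurated) subgroup is again almost normal, we may --- after passing to a finite-index subgroup of the given copy of $\F_n$ if necessary --- assume that $\Gamma$ contains $\Lambda\simeq\F_m$, $2\le m\le\infty$, as an almost normal subgroup admitting an epimorphism $\xi\colon\Lambda\to\SL_3(\Z)$. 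Put $\Delta=\ker(\xi)\lhd\Lambda$, so that $\Lambda/\Delta\simeq\SL_3(\Z)$; then $\Delta$ has infinite index in $\Lambda$, and $\Delta\lhd\Lambda\leq\Gamma$ is a $\{\T\}$-pseudorigid triple with $\Lambda\simeq\F_m$, as required by Theorem~\ref{t.mainthm2v3}.

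Next I would take $\sigma_0\colon\Gamma\actson\Gamma/\Delta$ to be the action on left cosets (identifying $\Gamma/\Delta$ with $\N$, which is legitimate since $\Gamma$ is countable and $[\Gamma:\Delta]=\infty$), and verify that $\sigma_0\restrict\Lambda$ is $\Delta$-suitable. The $\Delta$-orbit of the trivial coset $e\Delta$ is the singleton $\{e\Delta\}$, so $\sigma_0\restrict\Delta$ has a finite orbit. For the other requirement I must show $\stab_\Lambda(\gamma\Delta)=\Lambda\cap\gamma\Delta\gamma^{-1}$ has infinite index in $\Lambda$ for every $\gamma\in\Gamma$: by commensuration $\Sigma:=\Lambda\cap\gamma\Lambda\gamma^{-1}$ has finite index in both $\Lambda$ and $\gamma\Lambda\gamma^{-1}$, so $\Sigma':=\gamma^{-1}\Sigma\gamma$ is finite index in $\Lambda$; since $\Lambda\cap\gamma\Delta\gamma^{-1}\leq\Sigma$ and $\gamma^{-1}(\Lambda\cap\gamma\Delta\gamma^{-1})\gamma\leq\Sigma'\cap\Delta$, it suffices to show $[\Sigma':\Sigma'\cap\Delta]=\infty$; but $\Sigma'/(\Sigma'\cap\Delta)\cong\Sigma'\Delta/\Delta$ is a finite-index subgroup of $\Lambda/\Delta\simeq\SL_3(\Z)$, hence infinite. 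Thus all $\Lambda$-orbits on $\Gamma/\Delta$ are infinite and $\sigma_0\restrict\Lambda$ is $\Delta$-suitable. I expect this index bookkeeping --- keeping the two-sided commensuration straight for an arbitrary $\gamma\in\Gamma$ --- to be the only mildly delicate point, and it is exactly the (compressed) verification already present in the proof of Corollary~\ref{c.almostnormal}.

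Finally, with every hypothesis of Theorem~\ref{t.mainthm2v3} in place, I would invoke it to obtain the two countable-to-$1$ Borel reductions of $\simeq^{\bf TFA}$, apply Theorem~\ref{t.tfa} to conclude that $\oeq^{\actfwm(\Gamma,X,\mu)}$ and $\vneq^{\actfwm(\Gamma,X,\mu)}$ are not Borel, and combine this with the previously noted analyticity to finish.
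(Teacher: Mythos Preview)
Your proposal is correct and follows exactly the approach the paper intends: the corollary in question is the orbit-equivalence/von-Neumann-equivalence analogue of Corollary~\ref{c.almostnormal}, and the paper leaves its proof implicit precisely because one repeats the argument of Corollary~\ref{c.almostnormal} verbatim, replacing the appeal to Theorem~\ref{t.conjanal} by Theorem~\ref{t.mainthm2v3} (together with Theorem~\ref{t.tfa}). Your more careful index computation verifying $\Delta$-suitability is a welcome expansion of the compressed sentence in the proof of Corollary~\ref{c.almostnormal}.
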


\subsection{Isomorphism of factors is not Borel.}

The map that associates to an element in $\act(\Gamma,X,\mu)$ the corresponding group measure space von Neumann algebra was shown to be Borel in \cite{sato09}, when the set of von Neumann algebras on $L^2(\Gamma\times X)$ is given the Effros Borel structure. It is therefore clear from the above that isomorphism of $\II_1$ factors is not Borel. In \cite{sato09} it was shown that the isomorphism relation of $\II_1$ factors is even \emph{complete analytic}. However, no such conclusion about type $\II_\infty$ and type $\III_\lambda$, $0\leq\lambda\leq 1$ could be drawn from the construction found there. The actions considered in this paper do allow conclusions to be drawn about the Borel complexity of isomorphism of factors of type $\II_\infty$ and $\III_\lambda$. Specifically, we have:

\begin{theorem}\label{t.vnalgebras}
The isomorphism relation for factors of type $\II_\infty$ and type $\III_\lambda$ for each $0\leq\lambda\leq 1$ is analytic but not Borel.
\end{theorem}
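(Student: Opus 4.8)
The plan is to reduce everything to the type $\II_1$ case (already available from Theorem~\ref{t.mainthm2v3} together with the Borelness of $\sigma\mapsto L^\infty(X)\rtimes_\sigma\Gamma$ established in \cite{sato09}) by post-composing with suitable ``stabilization'' functors on the Effros-Borel space of separably acting factors. Fix $\Gamma,\Lambda,\Delta$ as in Theorem~\ref{t.mainthm2v3} --- for concreteness, a $\Gamma$ containing $\F_n$ as an almost normal subgroup as in Corollary~\ref{c.almostnormal} --- and let $h\colon{\bf TFA}\to\actfwm(\Gamma,X,\mu)$ be the Borel reduction built in that proof, with $h(A)$ conjugate to $\sigma_{\hat A}\times\hat\rho_n$, and write $M_A=L^\infty(X)\rtimes_{h(A)}\Gamma$. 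By \cite{sato09} the isomorphism relation on separably acting factors is analytic and the type is a Borel condition, so it suffices, for each type under consideration, to produce a Borel countable-to-$1$ reduction of $\simeq^{\bf TFA}$ into the isomorphism relation and then apply Theorem~\ref{t.tfa}. Recall that by Theorem~\ref{t.pseudofamily} we have $A_0\simeq^{\bf TFA}A_1\iff M_{A_0}\cong M_{A_1}$, and that by Ioana's Theorem~\ref{t.ioana} only countably many $\simeq^{\bf TFA}$-classes of $A$ land in a given isomorphism class of $\II_1$ factor (since $h(A)\restrict\F_n\conj h(A')\restrict\F_n$ forces $H^1_{:\Delta}(h(A)\restrict\F_n)\simeq H^1_{:\Delta}(h(A')\restrict\F_n)$, a group equal to $\hat A\times\Char(\Lambda/\Delta)$, which pins $A$ down up to at most countably many possibilities).

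For type $\II_\infty$ the reduction is obtained by post-composing with $M\mapsto M\mathbin{\overline{\otimes}}B(\ell^2)$, which is routinely Borel. If $\mathcal N\cong\mathcal N'$ are $\II_\infty$ factors and $p\in\mathcal N,\ q\in\mathcal N'$ are finite projections of equal trace, then $p\mathcal Np\cong q\mathcal N'q$ (an isomorphism $\mathcal N\to\mathcal N'$ carries $p$ to a finite projection of the same trace, which is Murray--von Neumann equivalent to $q$); since $p\bigl(M\mathbin{\overline{\otimes}}B(\ell^2)\bigr)p\cong M$ for $p=1\otimes e_{11}$, the map $M\mapsto M\mathbin{\overline{\otimes}}B(\ell^2)$ is injective on isomorphism classes. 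Hence $A\mapsto M_A\mathbin{\overline{\otimes}}B(\ell^2)$ witnesses $\simeq^{\bf TFA}\leq_B^\N$ isomorphism of $\II_\infty$ factors, by verbatim repetition of the argument in the proof of Theorem~\ref{t.mainthm2v3} with $M_A$ replaced by $M_A\mathbin{\overline{\otimes}}B(\ell^2)$; Theorem~\ref{t.tfa} finishes this case.

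For type $\III_\lambda$ with $0<\lambda<1$ we post-compose with $M\mapsto M\mathbin{\overline{\otimes}}R_\lambda$, $R_\lambda$ the Powers factor; this is Borel and carries isomorphic $M_A$'s to isomorphic $\III_\lambda$ factors. It is not injective on isomorphism classes, but if $M_{A_0}\mathbin{\overline{\otimes}}R_\lambda\cong M_{A_1}\mathbin{\overline{\otimes}}R_\lambda$ then, passing to the canonical discrete/continuous decomposition, the type $\II_\infty$ cores $M_{A_i}\mathbin{\overline{\otimes}}R_{0,1}$ are isomorphic, and a finite corner gives $M_{A_0}\mathbin{\overline{\otimes}}R\cong M_{A_1}\mathbin{\overline{\otimes}}R$ with $R$ the hyperfinite $\II_1$ factor. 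So it is enough to see that $A\mapsto M_A\mathbin{\overline{\otimes}}R$ is a Borel countable-to-$1$ reduction of $\simeq^{\bf TFA}$ into isomorphism of $\II_1$ factors. Realizing $R$ as $L^\infty(Z)\rtimes_\kappa\Lambda_0$ for an amenable group $\Lambda_0$ with a free ergodic p.m.p. action $\kappa$, one has $M_A\mathbin{\overline{\otimes}}R\cong L^\infty(X\times Z)\rtimes_{h(A)\times\kappa}(\Gamma\times\Lambda_0)$, a group-measure-space factor of a free ergodic action of $\Gamma\times\Lambda_0\supseteq\F_n$; applying Theorem~\ref{t.ioana} to the family $\{h(A)\times\kappa:A\in{\bf TFA}\}$ --- with $\rho_n$ the quotient of the $\F_n$-action coming from the $\Lambda$-coordinate of $\hat\rho_n$, once its hypotheses are verified (see below) --- yields that mutual isomorphism of the $M_{A_i}\mathbin{\overline{\otimes}}R$ forces the $\F_n$-restrictions of $h(A_i)\times\kappa$ into countably many conjugacy classes, whence (as $H^1_{:\Delta}$ of these restrictions still equals $\hat A\times\Char(\Lambda/\Delta)$, using Lemma~\ref{l.canisom} and Lemma~\ref{l.ergtrivial}) only countably many $\simeq^{\bf TFA}$-classes of $A$ occur. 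The types $\III_0$ and $\III_1$ are handled identically, using $M\mapsto M\mathbin{\overline{\otimes}}N$ for a fixed injective type $\III_0$ factor $N$, resp. the injective type $\III_1$ factor, and the continuous decomposition: the continuous core of $M\mathbin{\overline{\otimes}}N$ is $M\mathbin{\overline{\otimes}}\mathcal{C}(N)$, a type $\II_\infty$ factor whose finite corner is $M\mathbin{\overline{\otimes}}R$. In every case Theorem~\ref{t.tfa} then shows the isomorphism relation is not Borel.

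The step I expect to be the main obstacle is the application of Ioana's Theorem~\ref{t.ioana} to the stabilized family $\{h(A)\times\kappa\}$: one must choose the auxiliary amenable action $\kappa$ and the copy of $\F_n$ inside $\Gamma\times\Lambda_0$ so that $\rho_n$ really is a quotient of the $\F_n$-restriction via a quotient map satisfying the non-degeneracy hypothesis of Theorem~\ref{t.ioana} (no non-trivial element of the ambient group fixing the $\T^2$-coordinate), and simultaneously so that the extra factor is weakly mixing on $\Delta$, so that by Lemma~\ref{l.canisom} the group $H^1_{:\Delta}$ of the $\F_n$-restriction is unchanged and hence still recovers $\hat A$. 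Everything else is either a formal consequence of the $\II_1$ case or a standard fact from modular theory and the structure theory of injective factors.
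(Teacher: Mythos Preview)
Your argument has two gaps, one you acknowledge and one you do not.

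For type $\II_\infty$: the claim that $M\mapsto M\otimes\mathcal B(\ell^2)$ is injective on isomorphism classes of $\II_1$ factors is false in general. An isomorphism of $\II_\infty$ factors preserves the semifinite trace only up to a positive scalar, so the image of $1\otimes e_{11}$ under an isomorphism $M_{A_0}\otimes\mathcal B(\ell^2)\to M_{A_1}\otimes\mathcal B(\ell^2)$ is a finite projection whose corner is an \emph{amplification} $(M_{A_1})_t$ for some $t>0$, not $M_{A_1}$ itself. Thus your corner argument only yields that $M_{A_0}$ and $M_{A_1}$ are stably isomorphic, and Theorem~\ref{t.ioana} as stated addresses von Neumann equivalence, not its stable version.

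For type $\III_\lambda$: the obstacle you flag is genuine and cannot be circumvented within your framework. Writing $M_A\otimes R\cong L^\infty(X\times Z)\rtimes_{h(A)\times\kappa}(\Gamma\times\Lambda_0)$, the quotient map to $\T^2$ must factor through the $X$-coordinate (that is where $\rho_n$ lives), and then every nontrivial element of $\{e\}\times\Lambda_0$ fixes the $\T^2$-coordinate identically, so the non-degeneracy hypothesis of Theorem~\ref{t.ioana} fails on a set of full measure. No choice of amenable $\Lambda_0$, action $\kappa$, or embedding of $\F_n$ into $\Gamma\times\Lambda_0$ repairs this, since the quotient to $\T^2$ is forced to ignore the $Z$-coordinate.

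The paper bypasses both problems with a single observation you are missing: by \cite[Theorem~4.3]{ioana11} the factors $\tilde h(A)=L^\infty(X)\rtimes_{h(A)}\F_n$ (taking $\Gamma=\Lambda=\F_n$) are $\mathcal{HT}$-factors in Popa's sense. The argument of \cite[Theorem~9]{sato09}, which uses Popa's uniqueness theorem for the $\mathcal{HT}$ Cartan subalgebra, then gives directly that for \emph{any} injective factor $N$ --- including $\mathcal B(H)$ and any injective type $\III_\lambda$ factor --- one has $\tilde h(A)\otimes N\cong\tilde h(A')\otimes N$ if and only if $\tilde h(A)\cong\tilde h(A')$. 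This collapses both the $\II_\infty$ and the $\III_\lambda$ cases to the $\II_1$ case already established, with no passage through cores and no reapplication of Theorem~\ref{t.ioana} to a product action.
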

\begin{proof}
The proof follows exactly the same idea as in \cite[\S 3]{sato09}. Let $H=\ell^2(\N)$ be the separable infinite dimensional complex Hilbert space. Let $\Gamma=\Lambda=\F_n$, where $n$ is so large that there is $\Delta\lhd\F_n$ such that $\F_n/\Delta\simeq\SL_3(\Z)$. Let $h:{\bf TFA}\to\actfwm(\Gamma,X,\mu)$ be the Borel map considered in the proof of Theorem \ref{t.mainthm2v3}. Then it follows from  \cite[Lemma 5]{sato09} that there is a Borel map $\tilde h:{\bf TFA}\to\vN(H)$ (where $\vN(H)$ denotes the standard Borel space of von Neumann algebras acting on $H$, see \cite{effros65} or \cite{sato09} for background) such that $\tilde h(A)\simeq L^\infty(X,\mu)\rtimes_{h(A)}\F_n$ for all  $A\in{\bf TFA}$.

Note now that by \cite[Theorem 4.3]{ioana11} (see also the discussion before \cite[Corollary 4.6]{ioana11}), the factors $\tilde h(A)$ are $\mathcal{HT}$-factors. Therefore the argument used to prove \cite[Theorem 9]{sato09} applies. To be specific, the argument there shows that the map $\tilde h_{\II_\infty}:{\bf TFA}\to \vN(H\otimes H): A\mapsto \tilde h(A)\otimes \mathcal B(\mathcal H)$ is a Borel countable-to-1 reduction of $\simeq^{\bf TFA}$ to isomorphism of $\II_\infty$ factors since $\tilde h(A)\otimes\mathcal B(H)\simeq \tilde h(A')\otimes\mathcal B(H)$ if and only if $\tilde h(A)\simeq\tilde h(A')$. Thus isomorphism of type $\II_\infty$ factors is analytic and not Borel by Theorem \ref{t.tfa}.

Furthermore, it also follows from the proof of \cite[Theorem 9]{sato09} that if $N$ is an injective factor of type $\III_\lambda$, $0\leq\lambda\leq 1$, then $\tilde h(A)\otimes N\simeq\tilde h(A')\otimes N$ if and only if $\tilde h(A)\simeq \tilde h(A')$. Thus the map $\tilde h_{\III_\lambda}: {\bf TFA}\to\vN(H\otimes H): A\mapsto \tilde h(A)\otimes N$ is a Borel countable-to-1 reduction of isomorphism of in ${\bf TFA}$ to isomorphism in the class of separably acting $\III_\lambda$ factors. Thus isomorphism of type $\III_\lambda$ factors is not Borel.
\end{proof}

\begin{remark}\label{r.sdf}
Following a presentation of the results of this paper by the second author at the XI Atelier International de Th\'eorie des Ensembles in Luminy, France, Sy Friedman made the following remark: Under the extra set-theoretic assumption that ``$0^\sharp$ exists'' (see e.g. \cite{kanamori09}) or, which is more crude, that there exists a measurable cardinal, any analytic non-Borel set is automatically complete analytic. Therefore we obtain stronger conclusions in Theorems \ref{t.mainthm2v3} and \ref{t.vnalgebras} under this extra set-theoretic assumption.

However, we feel that it is merely a limitation of our proof that the stronger conclusion is not reached above within standard set theory (ZFC.) In fact we conjecture that the following holds:

\begin{conjecture}
Let $\Gamma$ be any non-amenable countably infinite discrete group. Then conjugacy, orbit equivalence and von Neumann equivalence in $\actfwm(\Gamma,X,\mu)$ are complete analytic equivalence relations (as subsets of $\actfwm(\Gamma,X,\mu)^2)$.
\end{conjecture}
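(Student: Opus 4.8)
The conjecture asks for two improvements over what is established above: to drop every hypothesis on $\Gamma$ beyond non-amenability, and to strengthen ``analytic, not Borel'' to ``complete analytic''. For conjugacy, Theorem \ref{t.conjanal} already yields the complete analytic conclusion whenever $\Gamma$ carries a $\{\T\}$-pseudorigid triple $\Delta\lhd\Lambda\leq\Gamma$ together with a $\Delta$-suitable action, so the real content is (a) to cover non-amenable groups admitting no such rigid subquotient, and (b) to obtain complete analyticity for $\oeq$ and $\vneq$, where the present paper proves only non-Borelness.

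The uniform mechanism I would aim for is a Borel-parametrized \emph{rigid} family of actions. For conjugacy this is precisely what \S 3--\S 4 deliver: by Corollary \ref{t.pseudofamily}, $\sigma_{\hat A_0}\times\rho\conj\sigma_{\hat A_1}\times\rho$ if and only if $A_0\cong A_1$, since the topological group $H^1_{:\Delta}$ is a conjugacy invariant pinning down $\hat A$. For $\oeq$ and $\vneq$ one needs the strictly stronger assertion that this family is \emph{OE-rigid} (resp.\ \emph{W*-rigid}): $\sigma_{\hat A_0}\times\rho\oeq\sigma_{\hat A_1}\times\rho$ (resp.\ $\vneq$) if and only if $A_0\cong A_1$. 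What the paper obtains is only the \emph{countable-to-one} version of this (Theorem \ref{t.ioana} bounds how far $\oeq$, $\vneq$ can collapse $\conj$), and this, fed into the Stern absoluteness machinery of \S 5 (Theorem \ref{t.tfa}), yields non-Borelness but is structurally incapable of giving a complete analytic conclusion; for that one needs an honest Borel reduction $\simeq^{\bf TFA}\leq_B\oeq$ (resp.\ $\vneq$), i.e.\ a genuine OE- or W*-superrigidity theorem for the family. When $\Gamma$ has property (T), such superrigidity follows from Popa's theorems ($\oeq$, $\vneq$ and $\conj$ then agree on the relevant family), so in that case the conjecture should already be within reach of the methods here.

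The serious obstacle is producing a rigid family for an \emph{arbitrary} non-amenable $\Gamma$. The construction of \S 4 requires $\Delta\lhd\Lambda\leq\Gamma$ with $\Lambda/\Delta$ a $\{\T\}$-cocycle superrigid malleable weakly mixing group, plus a $\Delta$-suitable $\Gamma$-action; but non-amenable groups need not contain free subgroups (e.g.\ Tarski monsters), let alone rigid quotients of subgroups, so this input is simply unavailable in general. I see two candidate substitutes. One is to import rigidity from a free group through the Gaboriau--Lyons theorem (the measurable von Neumann problem): every non-amenable $\Gamma$ has a Bernoulli-type action whose orbit equivalence relation contains, as an ergodic subrelation, that of a free ergodic $\F_2$-action, and one would then transfer the $\F_2$-calculations of \S 4 into the ambient $\Gamma$-setting; Epstein's co-induction trick is a partial realization of this for the OE problem (yielding $2^{\aleph_0}$ OE-inequivalent free actions of every non-amenable $\Gamma$, with Ioana's refinement doing the same for vNE), but it recovers only a \emph{subrelation} with no $\Gamma$-action structure, so there is no evident way to read off the conjugacy/OE/vNE class of the $\Gamma$-action, nor to make the parametrization rich enough to reduce a complete analytic relation. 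The other is to hope for a new cocycle-, OE-, or W*-superrigidity theorem valid for all non-amenable groups; but every known deformation/rigidity result (Popa, Ioana, Peterson--Sinclair, Vaes, and recent work on acylindrically hyperbolic groups) imposes structural hypotheses far stronger than non-amenability, so this is speculative.

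In light of the above, the realistic sub-goals, in increasing order of difficulty, are: (i) the conjecture for all $\Gamma$ with property (T), or more generally all $\Gamma$ with an infinite w-rigid quotient of a w-normal subgroup, where \S 3--\S 6 together with Popa superrigidity ought to give the full complete analytic statement; (ii) non-Borelness (not necessarily complete analyticity) of $\conj$, $\oeq$ and $\vneq$ for \emph{every} non-amenable $\Gamma$, by combining the Gaboriau--Lyons/Epstein/Ioana machinery with a Borel-parametrized form of Lemma \ref{l.Borel} and the countable-to-one Stern absoluteness argument of \S 5 --- under the extra set-theoretic hypothesis of Remark \ref{r.sdf} this would already yield the conjecture; and (iii) the ZFC upgrade of (ii) to complete analytic, which I expect to be the genuinely hard step and to require an injective invariant of $\oeq$ and $\vneq$ of descriptive complexity comparable to $\simeq^{\bf TFA}$ --- no such invariant is presently in sight, which is precisely why the conjecture remains open.
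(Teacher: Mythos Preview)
The statement is a \emph{conjecture}, and the paper does not prove it; it is posed in Remark \ref{r.sdf} precisely as an open problem motivated by the limitations of the methods in \S 5--\S 6. Your proposal is not a proof but a (correct and well-informed) analysis of why the conjecture is open and what would be required to settle it, and in that sense it is entirely in agreement with the paper. There is nothing to compare against, since the paper offers no argument for the conjecture beyond the remark that under the existence of $0^\sharp$ the ``analytic non-Borel'' conclusions of Theorems \ref{t.mainthm2v3} and \ref{t.vnalgebras} would upgrade automatically to ``complete analytic'' --- a point you also make in your item (ii).

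Your diagnosis of the obstacles is accurate: the present machinery needs a $\{\T\}$-pseudorigid triple inside $\Gamma$, which a general non-amenable group (e.g.\ a Tarski monster) need not possess; and for $\oeq$ and $\vneq$ the paper only obtains a countable-to-one reduction via Ioana's Theorem \ref{t.ioana}, which feeds into Theorem \ref{t.tfa} to give non-Borelness but cannot by itself yield complete analyticity in ZFC. Your proposed sub-goals (i)--(iii) are a reasonable research program, but none of them constitutes a proof, and you should not present this as one.
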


\end{remark}

\bibliographystyle{amsplain}
\bibliography{vneqanalytic}

\bigskip

{\smaller \sc
\noindent  Department of Mathematics, California Institute of Technology\\
Pasadena, CA 91125

}

{\smaller \tt \noindent epstein@caltech.edu}

\bigskip

\smallskip

{\smaller\sc
\noindent Department of Mathematics, University of Copenhagen\\
Universitetsparken 5, 2100 Copenhagen, Denmark
}

{\smaller \tt \noindent asgert@math.ku.dk}

\end{document}